\newif\ifpdfAuthoring
\newcommand    {\hilb}[3]       {\left(#2,#3\right)_{#1}}
\newcommand    {\C}             {{\mathbb C}}
\newcommand    {\Cl}            {\sym{Cl}}
\newcommand    {\units}[1]      {{#1}^\times}
\newcommand    {\D}             {{\mathfrak o}}
\newcommand    {\DL}            {\id O}
\newcommand    {\F}             {{\mathbb F}}
\newcommand    {\N}             {\sym{n}}
\newcommand    {\n}             {\sym{n}}
\newcommand    {\Q}             {{\mathbb Q}}
\newcommand    {\R}             {{\mathbb R}}
\newcommand    {\SL}[1][\D]     {\sym{SL}(2,{#1})}
\newcommand    {\Z}             {{\mathbb Z}}
\newcommand    {\dif}           {{\mathfrak{d}}}
\newcommand    {\dual}[1]          {{#1}^{\#}}
\newcommand    {\id}[1]         {{\mathfrak {#1}}}
\newcommand    {\isom}          {\cong}
\newcommand    {\leg}[2]        
                                {\genfrac{(}{)}{}{}{#1}{#2}}
\newcommand    {\mat}[4]        {\left(\begin{smallmatrix}#1&#2\\#3&#4\end{smallmatrix}\right)}
\newcommand    {\sym}[1]        {\operatorname{#1}}
\newcommand    {\tr}            {\sym{tr}}
\newcommand    {\uleg}[2]        
                                {\genfrac{[}{]}{}{}{#1}{#2}}
\newcommand    {\card}[1]       {\sym{card}\left(#1\right)}
\bigskip\begin{tcolorbox}[enhanced jigsaw, breakable ,size=title,
    colback=yellow!5!white,colframe=yellow!75!black,fonttitle=\bfseries,
    title={#1},pad at break=1mm, break at=\textheight/0pt]}
\theoremstyle{plain}
\newtheorem{Theorem}{Theorem}
\newtheorem{Main Theorem}{Main Theorem}
\newtheorem{Proposition}[Theorem]{Proposition}
\newtheorem{Corollary}[Theorem]{Corollary}
\newtheorem{Lemma}[Theorem]{Lemma}
\theoremstyle{definition}
\theoremstyle{remark}
\newtheorem*{Remark}{Remark}
\begin{document}

\title[Relative quadratic extensions]{%
  A classical approach to relative quadratic extensions}

\author[Hat{\.I}ce Boylan]{%
  Hat{\.I}ce Boylan}

\address{{\.I}stanbul \"Universitesi, Fen Fak\"ultesi, Matematik
  B\"ol\"um\"u, 34134 Vezneciler, {\.I}stanbul, Turkey}

\email{hatice.boylan@gmail.com}

\author{Nils-Peter Skoruppa}

\address{Universit\"at Siegen, Department Mathematik, 57068 Siegen, Germany}

\email{nils.skoruppa@gmail.com}

\keywords{
  Algebraic Number Theory, Relative Quadratic Extensions, Quadratic Congruences in Number Fields, Discriminants of Relative Quadratic Extensions}

\subjclass[2010]{
    11R11 (primary) 
    and
    11R29, 
    11S99 (secondary) 
}

\begin{abstract}
  We show that we can develop from scratch and using only classical
  language a theory of relative quadratic extensions of a given number
  field $K$ which is as explicit and easy as for the well-known case
  that~$K$ is the field of rational numbers. As an application we
  prove a reciprocity law which expresses the number of solutions of a
  given quadratic equation modulo an integral ideal $\id a$ of~$K$ in
  terms of $\id a$ modulo the discriminant of the equation. We study
  various $L$-functions associated to relative quadratic
  extensions. In particular, we define, for totally negative algebraic
  integers $\Delta$ of a totally real number field $K$ which are
  squares modulo~$4$, numbers $H(\Delta,K)$, which share important
  properties of classical Hurwitz class numbers.  In an appendix we
  give a quick elementary proof of certain deeper properties of the
  Hilbert symbol on higher unit groups of dyadic local number fields.
\end{abstract}

\maketitle

\tableofcontents

\newpage

\section{Introduction}
\label{sec:intro}

If $L$ is a quadratic extension of~$\Q$ we can pick any integral
quadratic irrationality~$a$ in $L$, set $\Delta:=\tr(a)-4\N(a)$ and
read off all important arithmetic properties of~$L$ from the rational integer
$\Delta$: We have $L=\Q(\sqrt \Delta)$, the integer $\Delta$ is a
square modulo~$4$, and if we divide out the largest perfect square
$f^2$ such that $\Delta_0:=\Delta/f^2$ is still a square modulo~$4$,
then $\Delta_0$ is the discriminant of $L$, which carries all
information about the ramification of rational primes in~$L$. The
application which associates to a rational prime $p$ the Legendre
symbol $\leg \Delta p$ extends to a character modulo~$\Delta$. It
factors through the primitive Dirichlet character $\leg
{\Delta_0}*$.
The Dedekind zeta functions~$\zeta_L(s)$ of $L$ and
$\zeta_\Q(s)=\zeta(s)$ of $\Q$ are related by the identity
$\zeta_L(s)=\zeta(s)L(\leg {\Delta_0}{*},s)$, where
$L(\leg {\Delta_0}{*},s)=\sum_{n\ge 1}\leg {\Delta_0}{n} n^{-s}$, and
this identity encodes the information of the splitting of the rational
primes in~$L$.

These facts are well-known since the beginning of algebraic number
theory in the 19th century, and the explicit character of this theory
is helpful in many applications. In contrast to this one does not find
such an easy and smooth description of the arithmetic theory of
relative quadratic extensions. One finds some hints towards such a
theory in Hecke's ``Vorlesungen \"uber die Theorie der Algebraischen
Zahlen''~\cite[\S 39]{Hecke} and, in particular, in its last
section~\cite[\S 63]{Hecke}. However, Hecke's treatment is not as
completely developed as in the case of extensions of~$\Q$. In modern
algebraic number theory relative quadratic extensions are subsumed
under the more abstract class field theory, which provides a
conceptual background for the arithmetic of general abelian
extensions, but lacks often explicitness even in simpler subclasses of
abelian extensions as for instance quadratic extensions.

In this article we propose an explicit theory of relative quadratic
extension which extends almost completely the classical and explicit
theory of quadratic extensions of~$\Q$. This depends, first of all, on
a correct extension of the notion of {\em discriminants} and {\em
  fundamental discriminants} to arbitrary number fields~$K$. By {\em
  discriminant in~$K$} we mean, similar to the case of rational
numbers, any algebraic integer in $K$ which is a square
modulo~$4$. However, if the class number of $K$ is larger than~$1$,
there will be in general no algebraic integer $f$ in $K$ whose square
divides $\Delta$ and such that $\Delta/f^2$ is still a square
modulo~$4$.  However, as it turns out, the maximal ideal
$\id f_\Delta$ whose square divides $\Delta$ and such that
\begin{equation*}
  \Delta \equiv x^2\bmod 4\id f_\Delta^2
\end{equation*}
for some algebraic integer $x$ in~$K$, serves very well as replacement
for the missing~$f$ for developing an explicit theory of relative
quadratic extensions.

This theory will be developed in \S\ref{sec:discriminants} and its
subsections. We show in particular, that, for a given discriminant
$\Delta$ in $K$, the relative discriminant $D_{L/K}$ of
$L=K(\sqrt \Delta)$ over $K$ is given by the formula
(Theorem~\ref{thm:discriminant-formula})
\begin{equation*}
  D_{L/K} = \Delta/\id f_\Delta^2
  .
\end{equation*}
We introduce a multiplicative function $\leg \Delta{\id a}$ on ideals
$\id a$ of $K$ which are relatively prime to $\Delta$ by setting, for
any prime ideal $\id p$,
\begin{equation*}
  \leg \Delta {\id p}
  =
  \begin{cases}
    +1&\text{if $\Delta$ is a square modulo~$4\id p$}
    \\
    -1&\text{otherwise.}
  \end{cases}
\end{equation*}
and show that it defines a Gr\"o\ss encharakter modulo~$\Delta$ whose
conductor equals $\Delta/\id f_\Delta^2$
(Theorem~\ref{thm:groessencharakter-property}). (For $\id p\nmid 2$ we
can replace $4\id p$ by $\id p$ since $\Delta$ is a square modulo~$4$,
and then the definition of the symbol $\leg\Delta{\id p}$ is
classical.) The associated primitive Gr\"o\ss encharakter is of course
nothing else than the Gr\"o\ss encharakter commonly known as `the
character of the quadratic extension $L/K$'; see \S2.6)

For a discriminant $\Delta$ in the field of rational numbers, the
Dirichlet $L$-function $L(\leg {\Delta}{*},s)$ and related Dirichlet
series like $\sum_{a\ge 1}N_\Delta(a)a^{-s}$, where $N_\Delta(a)$
counts the number of solutions $x$ modulo~$2a$ of
$x^2\equiv \Delta \bmod 4a$, play an important role in the arithmetic
in $\Q(\sqrt\Delta)$. In~\S\ref{sec:zeta-functions} we define and
study the analogues of these Dirichlet series for relative quadratic
extensions. The main results are Theorem~\ref{thm:counting-formula},
\ref{thm:zeta-delta-interpretation}, \ref{thm:analytic-properties} and
\ref{thm:special values}.

The first theorem states, for a given discriminant $\Delta$ and
integral ideal $\id a$ in a number field $K$ with ring of integers
$\D$, a reciprocity law expressing the number
\begin{equation*}
  N_{\Delta}(\id a)
  =
  \card {
    \left\{
      \D/2\id a
      :
      x^2\equiv \Delta\bmod 4\id a
    \right\}
  }
\end{equation*}
in terms of a generalized quadratic residue symbol $\chi_\Delta$
modulo $\Delta$ (see~\eqref{eq:chi-Delta} for its
definition). Theorem~\ref{thm:zeta-delta-interpretation} relates the
Dirichlet series
\begin{equation*}
  \zeta(\Delta,s) := \sum_{\id a} \frac {N_{\Delta}(\id a)}{\N(\id a)^{s}}
\end{equation*}
(the sum has to be taken over all integral ideals of $K$) to the
Dirichlet series of the order
$\DL_{\id f_\Delta} := \D +\D \frac {b+\sqrt \Delta}2$ of
$L=K(\sqrt \Delta)$, where $b$ is any solution in $\D$ of
$b^2\equiv \Delta \bmod 4\id f_\Delta^2$.
Theorem~\ref{thm:analytic-properties} summarizes the important
properties of the Dirichlet series
$L(\chi_\Delta,s)=\sum_{\id a} \chi_\Delta(\id a)\N(\id a)^{-s}$.
Finally, Theorem~\ref{thm:special values} proposes a generalization of
Hurwitz class numbers to totally negative discriminants in totally
really number fields $K$ as
\begin{equation*}
  H(\Delta,K):=L(\chi_\Delta,0)
  .
\end{equation*}
It gives an explicit finite formula for these numbers in terms of the
class numbers of $K$ and~$L=K(\sqrt \Delta)$, which shows in
particular, that these numbers are rational. These formulas generalize
the classical formulas for the Hurwitz class numbers.

In Appendix 1 (\S\ref{sec:tables}) the reader finds tables of the
numbers $H(\Delta,K)$ for various totally real number fields. We
computed, for all fields in the range of the Bordeaux tables of number
fields~\cite{nftables} with not more than $8$ genus classes, the
discriminants $\Delta$ (modulo squares in $K$) such that
$\Delta=\id f_\Delta^2$.  These discriminants are exactly those for
which the extension $K(\sqrt \Delta)/K$ is unramified at the finite
places, and their number modulo squares in $K$ equals the number of
genus classes in $K$. The results of our computation is summarized in
Table~\ref{tab:unit-dicriminants}.

Finally we prove in Appendix~2 (Section~\ref{sec:Hilbert-symbol}) an
orthogonality relation for the Hilbert symbol $\hilb K\_\_$ of a
dyadic number field~$K$ (cf.~Theorem~\ref{thm:dyadic-hilbert-symbol}),
which we needed in the study of the Gr\"o\ss encharakters
$\leg \Delta*$ in Theorem~\ref{thm:groessencharakter-property}. The
non-trivial part of its proof is to show, for the higher unit groups
$U_i$ in $K$, that $\hilb K {U_i}{U_j} =1$ whenever $i+j=2e$, where
$e$ is the ramification index of~$K$. All proofs of this identity
which we found in the literature make use of rather deep theorems of
local class field theory.  To comply with the explicit and
self-contained nature of this article we seeked for an elementary
proof. Eventually we were able to to give a simple six-line-proof
(Proof of Proposition~\ref{prop:Hilbert-symbol-on-unit-groups}), which
the reader might find amazing.

\section{Discriminants and relative quadratic extensions}
\label{sec:discriminants}

\subsection{Discriminants in a number field}
 
For this section we fix a number field $K$ with ring of integers
$\D$. Let $L$ be a quadratic extension of~$K$. We can obtain $L$ by
adjoining the square root $\Delta=\tr_{L/K}(a)^2-4\N_{L/K}(a)$ to $K$,
where $a$ is any integer in $L$ not in $K$. Note that $\Delta$ is a
square modulo~$4$ (not necessarily relatively prime to~$2$). For
obtaining the quadratic extensions of $K$ it suffices therefore to
adjoin square roots of integers $\Delta$ in $K$ which are squares
modulo~$4$.

We shall call those nonzero integers $\Delta$ of~$K$ which are squares
modulo~$4$ henceforth {\em discriminants in $K$}\footnote{The
  discriminants of $K$ which are relatively prime to~$2$ are
  occasionally called {\em Prim\"arzahlen} in older literature;
  see~\cite[\S59]{Hecke}.}.  For a discriminant $\Delta$ in $K$ we let
$\id f_\Delta$ be the largest integral ideal in $K$ whose square
divides $\Delta$ and such that
\begin{equation*}
  \Delta \equiv x^2 \bmod 4 \id f_\Delta^2
\end{equation*}
for some integer $x$ in $K$. We partition the discriminants of $K$
into classes, where $\Delta_1$ and $\Delta_2$ belong to the same class
if and only if $K(\sqrt {\Delta_1}) = K(\sqrt {\Delta_2})$.  It is
also clear that a class of discriminants coincides with the set of
discriminants in a class of $\units K/{\units K}^2$.  Moreover, we
have
\begin{Proposition}
  \label{prop:class-of-Delta}
  If $\Delta$ is not a square in $K$, then the class of $\Delta$
  coincides with the set of all nonzero $\tr_{L/K}(a)^2-4\N_{L/K}(a)$,
  where $a$ runs through the integers of $L=K(\sqrt {\Delta} )$.
\end{Proposition}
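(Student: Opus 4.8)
The plan is to prove both inclusions. One direction is immediate: if $a$ is any integer of $L=K(\sqrt\Delta)$, then $\tr_{L/K}(a)^2-4\N_{L/K}(a)$ is the discriminant of the characteristic polynomial of $a$ over $K$; when $a\notin K$ this polynomial is irreducible, so this quantity is nonzero and $L=K(\sqrt{\tr(a)^2-4\N(a)})$. Hence $\tr_{L/K}(a)^2-4\N_{L/K}(a)$ is a discriminant in $K$ (it is manifestly a square modulo $4$, being $\tr(a)^2-4\N(a)$) lying in the class of $\Delta$.

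For the reverse inclusion, suppose $\Delta'$ is a discriminant in $K$ with $K(\sqrt{\Delta'})=L$. I must produce an integer $a$ of $L$ with $\tr_{L/K}(a)^2-4\N_{L/K}(a)=\Delta'$. Since $\Delta'$ is a square modulo $4$, pick $b\in\D$ with $\Delta'\equiv b^2\bmod 4$, and set $a=\frac{b+\sqrt{\Delta'}}2$. Then $a$ satisfies $a^2-ba+\frac{b^2-\Delta'}4=0$ with coefficients in $\D$ (the constant term is an algebraic integer because $b^2\equiv\Delta'\bmod 4$), so $a$ is an integer of $L$; moreover $\tr_{L/K}(a)=b$ and $\N_{L/K}(a)=\frac{b^2-\Delta'}4$, whence $\tr_{L/K}(a)^2-4\N_{L/K}(a)=b^2-(b^2-\Delta')=\Delta'$. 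This settles the second inclusion.

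Finally I need to check that the statement is consistent with the preceding remarks, namely that the set so described really is the class of $\Delta$, i.e. that it is contained in the stated class of $\units K/\units K^2$ and that it meets no other class: but this follows from the first inclusion together with the observation, already noted in the text, that two discriminants lie in the same class exactly when they generate the same quadratic extension, combined with the fact that every $\Delta'$ produced in the second step has $K(\sqrt{\Delta'})=L=K(\sqrt\Delta)$ by hypothesis.

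The only point requiring a little care — and the main (mild) obstacle — is the integrality of $a=\frac{b+\sqrt{\Delta'}}2$: one must use precisely that $\Delta'$ is a \emph{square modulo $4$}, not merely modulo something coarser, to guarantee that the minimal polynomial $X^2-bX+\frac{b^2-\Delta'}4$ has integral coefficients. This is exactly the role played by the definition of ``discriminant in $K$'' and is why the hypothesis that $\Delta$ is not a square (ensuring $L/K$ is genuinely quadratic, so that traces and norms are the relative trace and norm of a degree-two extension) is needed for the two descriptions to coincide.
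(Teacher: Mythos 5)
Your proof is correct and follows essentially the same route as the paper: for one inclusion take $a=\frac{b+\sqrt{\Delta'}}{2}$ (integral precisely because $\Delta'\equiv b^2\bmod 4$) and compute $\tr(a)^2-4\N(a)=\Delta'$; for the other, observe that a nonzero value forces $a\notin K$ and that $a$ satisfies $x^2-\tr(a)x+\N(a)=0$, so $L=K(a)=K(\sqrt{\tr(a)^2-4\N(a)})$. The only cosmetic remark is that the needed implication is ``nonzero value $\Rightarrow a\notin K$'' (immediate since $a\in K$ gives the value $0$), rather than its converse as you phrased it.
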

\begin{proof}
  If the discriminant $\Delta_1$ is in the class of $\Delta$, so
  that~$K(\sqrt {\Delta_1}) = K(\sqrt {\Delta})$ and
  $\Delta_1\equiv x^2\bmod 4$ for some integer $x$ of $K$, then
  $a:=\frac {-x+\sqrt {\Delta_1}}2$ is an integer of $L$, and
  $\Delta_1=\tr_{L/K}(a)^2-4\N_{L/K}(a)$.

  If vice versa $\Delta_1:=\tr_{L/K}(a)^2-4\N_{L/K}(a)\not=0$, then
  $L=K(a)$ (since, $a$ in $K$ implies $\Delta_1=0$), and hence $a$ is
  a solution of $x^2-\tr_{L/K}(a)x+\N_{L/K}(a)=0$, which implies
  $K(a)=K(\sqrt {\Delta_1})$.
\end{proof}

\subsection{The discriminant of a relative quadratic extension}

For an extension $L$ of $K$, we use $D_{L/K}$ for the relative
discriminant of the extension $L/K$.  For quadratic extensions,
$D_{L/K}$ equals the gcd of all numbers $\tr_{L/K}(a)^2-4\N_{L/K}(a)$,
where $a$ runs through the integers of~$L$. In other words, if we
write $L=K(\sqrt \Delta)$ with a discriminant of $K$, it equals the
ideal generated by all discriminants in the class of $\Delta$
(cf.~Proposition~\ref{prop:class-of-Delta}). The relative discriminant
of~$L/K$ can be expressed in terms of $\Delta$ alone by a formula
analogous to the case where~$K=\Q$.

\begin{Theorem}
  \label{thm:discriminant-formula}
  For any given integer $\Delta$ in $\units K$ (not necessarily a
  square mod~$4$), let~$\id s$ be the largest integral ideal whose
  square divides $\Delta$, and let $\id t$ be the largest integral
  ideal dividing $2$ such that $\Delta$ is a square mod
  $(\id s \id t)^2$. Then
  \begin{equation}
    \label{eq:discriminant-formula}
    D_{K(\sqrt \Delta)/K}
    =
    4\Delta/(\id s\id t)^2.
  \end{equation}
  In particular, if $\Delta$ is a discriminant in~$K$, then one has
  \begin{equation}
    \label{eq:proper-discriminant-formula}
    D_{K(\sqrt \Delta)/K} = \Delta/\id f_\Delta^2
    .
  \end{equation}
\end{Theorem}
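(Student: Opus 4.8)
The plan is to reduce everything to a primewise computation of the ideal $D_{L/K}$, where $L=K(\sqrt\Delta)$. If $\Delta$ is a square in $K$ then $L=K$ and both sides of \eqref{eq:discriminant-formula} equal $\D$ (a direct check from the definitions of $\id s$ and $\id t$), so assume $\Delta$ is not a square. Then, by the description of the relative discriminant recalled just before the theorem together with Proposition~\ref{prop:class-of-Delta}, $D_{L/K}$ is generated by all discriminants in the class of $\Delta$; that is, $D_{L/K}$ is the greatest common divisor of all ideals $(\Delta c^2)$ with $c\in\units K$, $\Delta c^2\in\D$ and $\Delta c^2$ a square modulo~$4$. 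Since each of these conditions on $c$ is a condition at a single prime, a weak approximation (Chinese remainder) argument shows that $v_{\id p}$ of this gcd equals the minimum of $v_{\id p}(\Delta c^2)$ over all $c$ feasible at $\id p$, i.e. with $v_{\id p}(\Delta c^2)\ge 0$ and, when $\id p\mid 2$, with $\Delta c^2$ a square modulo $\id p^{2e}$, where $e=v_{\id p}(2)$. It thus suffices to evaluate this local minimum at every $\id p$ and match it with $v_{\id p}\bigl(4\Delta/(\id s\id t)^2\bigr)$.

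For $\id p\nmid 2$ only integrality is imposed, so the minimum of $v_{\id p}(\Delta)+2v_{\id p}(c)$ subject to nonnegativity is $0$ or $1$ according to the parity of $v_{\id p}(\Delta)$, and since $\id t$ is prime to $\id p$ this agrees with $v_{\id p}(\Delta)-2\lfloor v_{\id p}(\Delta)/2\rfloor=v_{\id p}\bigl(4\Delta/(\id s\id t)^2\bigr)$. For $\id p\mid 2$ note first that the square condition modulo $\id p^{2e}$ is automatic once $v_{\id p}(\Delta c^2)\ge 2e$. If $v_{\id p}(\Delta)$ is odd, then no element of odd $\id p$-valuation strictly below $2e$ is a square modulo a higher power of $\id p$, so the local minimum is $2e+1$; one checks directly from the definition that $v_{\id p}(\id t)=0$ in this case, and the two sides agree. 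If $v_{\id p}(\Delta)=2s$ is even, write $\Delta=\pi^{2s}u$ with $\pi$ a uniformizer of $K_{\id p}$ and $u$ a unit; for an even $n<2e$ the existence of $c$ with $v_{\id p}(\Delta c^2)=n$ and $\Delta c^2$ a square modulo $\id p^{2e}$ is equivalent to $u$ being a square modulo $\id p^{2e-n}$. The outcome is therefore governed by the largest exponent $k$ with $u$ a square modulo $\id p^{k}$, and one reads off $v_{\id p}(\id t)=\min(e,\lfloor k/2\rfloor)$ from the definition of $\id t$.

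The heart of the matter is an elementary fact about the dyadic field $K_{\id p}$, which I would isolate as a lemma: for a unit $u$ one has $k\ge 2e+1$ if and only if $u$ is a square, and if $k$ is finite then $k=2e$ or $k$ is odd. Its proof is short: $u\equiv x^2\bmod\id p^{2e+1}$ lets one solve $y^2=u$ by Hensel's lemma, since the derivative $2x$ has valuation $e$; and if $k=2\ell$ were even with $k<2e$, then writing $u\equiv x^2+\pi^{k}w\bmod\id p^{k+1}$ with $w$ a unit and choosing — using that in residue characteristic $2$ every residue is a square — a unit $w_0$ with $w_0^2\equiv w\bmod\id p$, the congruence $(x+\pi^{\ell}w_0)^2\equiv x^2+\pi^{k}w_0^2\equiv u\bmod\id p^{k+1}$, in which the cross term $2x\pi^{\ell}w_0$ has valuation $e+\ell>2\ell=k$ because $\ell<e$, would contradict the maximality of $k$. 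Granting the lemma: if $k\ge 2e$ (in particular if $u$ is a square) the local minimum is $0$ and $v_{\id p}(\id t)=e$; if $k$ is odd and $<2e$ the smallest admissible even $n$ is $2e-k+1$ and $v_{\id p}(\id t)=(k-1)/2$. In both cases the local minimum equals $2e+v_{\id p}(\Delta)-2v_{\id p}(\id s)-2v_{\id p}(\id t)=v_{\id p}\bigl(4\Delta/(\id s\id t)^2\bigr)$. This proves \eqref{eq:discriminant-formula}.

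Finally, \eqref{eq:proper-discriminant-formula} follows from \eqref{eq:discriminant-formula} by a routine comparison of $\id p$-adic valuations: when $\Delta$ is a discriminant in $K$, the congruence $\Delta\equiv x^2\bmod 4$ forces $v_{\id p}(\id s\id t)\ge e$ at every $\id p\mid 2$, so $(\id s\id t)^2/4$ is an integral ideal all of whose prime exponents are even, hence a perfect square $\id g^2$, and unwinding the definitions of $\id f_\Delta$, $\id s$ and $\id t$ one verifies $\id g=\id f_\Delta$; substituting gives $D_{L/K}=4\Delta/(\id s\id t)^2=\Delta/\id f_\Delta^2$. The main obstacle in the whole argument is the dyadic case of the middle steps — establishing the parity lemma and correctly extracting the identity $v_{\id p}(\id t)=\min(e,\lfloor k/2\rfloor)$ from the definition of $\id t$; the odd-prime case, the reduction to a primewise statement, and the passage to \eqref{eq:proper-discriminant-formula} are routine bookkeeping.
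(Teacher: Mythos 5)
Your proof is correct, but it follows a genuinely different route from the paper's. The paper localizes at each prime $\id p$ and computes $D_{L/K}\D_{\id p}=(\omega-\omega')^2\D_{\id p}$ from an explicitly constructed $\D_{\id p}$-basis $1,\omega$ of the local ring of integers (taking $\omega=\sqrt{\Delta_1}$ or $\omega=(c+\sqrt{\Delta_1})/\pi^t$ as appropriate), whereas you start from the characterization of $D_{L/K}$ as the gcd of all discriminants in the class of $\Delta$, convert it by weak approximation into a primewise minimization of $v_{\id p}(\Delta c^2)$ over locally feasible $c$, and settle the dyadic case with your parity lemma on the largest power of $\id p$ modulo which a unit is a square. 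That lemma is essentially Lemma~\ref{lem:units-mod-squares} of the appendix (which the paper uses only later, for the conductor computation), proved the same way, so the underlying dyadic arithmetic is shared; the difference is in the framing. The paper's construction has the side benefit of producing the explicit local integral basis, which is reused later (e.g.\ in Proposition~\ref{prop:description-of-orders} and Proposition~\ref{prop:Steinitz-class}); your route avoids having to verify that a proposed $\omega$ actually generates $\mathfrak O\D_{\id p}$, but it leans on the gcd characterization, which the paper states without proof just before the theorem and which is itself usually established by exactly the kind of local-basis argument the paper employs — so your argument is legitimate relative to the text, though not more self-contained. All the local computations check out (including the identification $v_{\id p}(\id t)=\min(e,\lfloor k/2\rfloor)$ and the matching of $\max(0,2e-k)$ rounded up to the nearest even integer with $2e-2v_{\id p}(\id t)$), and the final passage to \eqref{eq:proper-discriminant-formula}, though stated tersely, is the same routine maximality check the paper carries out in its last paragraph.
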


\begin{proof}
  If $\Delta$ is a square in~$K$ both sides
  of~\eqref{eq:discriminant-formula} are equal to the unit ideal.  We
  can therefore assume that $\Delta$ is not a square, so that
  $L=K(\sqrt\Delta)$ is a quadratic extension of~$K$ with, say ring of
  integers~$\mathfrak{O}$. It suffices to prove
  $D_{L/K}\D_{\id p}=(\Delta/(\id s \id t)^2)\D_{\id p}$ for all prime
  ideals $\id p$ of~$K$, where $\D_{\id p}$ denotes the localization
  of~$\D$ at~$\id p$.

  Let $\id p$ be a prime ideal of~$K$. Then
  \begin{equation*}
    D_{L/K}\D_{\id p}=
    \det
    \begin{pmatrix}
      1&\omega\\
      1&\omega'
    \end{pmatrix}^2 \D_{\id p} = (\omega-\omega')^2\D_{\id p} ,
  \end{equation*}
  where $1,\omega$ is an $\D_{\id p}$-basis of
  $\mathfrak{O}\D_{\id p}$, and where $\omega'$ is the Galois
  conjugate of~$\omega$ in the extension~$L/K$ (see e.g.~\cite[\S3,
  Prop.~4]{Froehlich-65}).  For computing a basis we recall that the
  ring $\mathfrak{O}\D_{\id p}$ is the algebraic closure of
  $\D_{\id p}$ in $L$ (see e.g.~\cite[\S4, Lemma~1]{Froehlich-65}).
  In other words, we can write
  \begin{equation*}
    \mathfrak{O}\D_{\id p}
    =
    \left\{
      a+b\sqrt {\Delta_1}: a,b\in K,\ 
      2 a\in \D_{\id p},\ a^2-b^2\Delta_1 \in \D_{\id p} 
    \right\}
    ,
  \end{equation*}
  where we set $\Delta_1=\Delta/\pi^{2\lfloor l/2\rfloor}$ with
  $\id p^l$ denoting the exact power dividing $\Delta$, and $\pi$ an
  element of $\D_{\id p}$ such that $\D_{\id p}\id p=\pi \D_{\id p}$.
  It is quickly verified that we can take as second basis element
  $\omega$ of $\mathfrak{O}\D_{\id p}$ over $\D_{\id p}$ the element
  $\omega=\sqrt {\Delta_1}$ if ${\id p}$ is odd, and also if
  ${\id p}\mid 2$ and $\Delta_1$ is divisible by~$\id p$.  Otherwise
  one can take $\omega=(c+\sqrt \Delta_1)/\pi^{t}$, where $t$ is
  maximal such that $\pi^t\mid 2$ and $\Delta_1$ is a square
  modulo~$\pi^{2t}\D_{\id p}$, and where $c$ is any solution in
  $\D_{\id p}$ of $c^2\equiv \Delta_1\bmod \id p^{2t}$.  For this
  $\omega$ we have
  \begin{equation*}
    (\omega-\omega')^2
    =
    \begin{cases}
      4\Delta_1&\text{if $\id p$ is odd or $\id p\mid 2,\Delta_1$,}
      \\
      4\Delta_1/\pi^{2t}&\text{otherwise.}
    \end{cases}
  \end{equation*}
  The formula~\eqref{eq:discriminant-formula} becomes now obvious.

  Assume that $\Delta$ is a square mod~$4$. We have to show that
  $\id f:=\id s\id t/2$ is integral and that $\id f=\id f_{\Delta}$.
  For proving the first statement, it suffices to note that $\Delta$
  is a square mod~$4$ and mod~$(\id s\id t)^2$, and that $\id t$ is
  the largest ideal (dividing $2$) with this property.  For the second
  statement we note, first of all, that $\id f^2$ divides $\Delta$
  (since $(\id s\id t)^2$ divides $4\Delta$), and that $\Delta$ is a
  square mod $4\id f^2=(\id s\id t)^2$. But $\id f$ is also the
  maximal ideal with these properties. Namely, if, for some prime
  ideal $\id p$, the square of $\id f':=\id f\id p$ divides $\Delta$,
  then $v_{\id p}({\id t}\id p/2)\le 0$ (by the maximality of
  $\id s$), i.e.~$\id t\id p\mid 2$. But then $\Delta$ cannot be a
  square mod $4{\id f'}^2=(\id s \id t\id p)^2$ by the maximality of
  $\id t$. This completes the proof of the theorem.
\end{proof}

We note two corollaries.
\begin{Corollary}
  The discriminant of a relative quadratic extension represents a
  square in the class group.
\end{Corollary}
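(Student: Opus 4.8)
The plan is to read this off directly from the discriminant formula~\eqref{eq:proper-discriminant-formula}. First I would recall that every quadratic extension $L/K$ arises as $L=K(\sqrt\Delta)$ for some discriminant $\Delta$ in $K$: indeed, as noted at the start of \S\ref{sec:discriminants}, one may take $\Delta=\tr_{L/K}(a)^2-4\N_{L/K}(a)$ for any integer $a\in L\setminus K$, and this $\Delta$ is a square modulo~$4$, hence a discriminant in the sense defined above.

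Next I would invoke Theorem~\ref{thm:discriminant-formula}, which gives $D_{L/K}=\Delta/\id f_\Delta^2$ as an identity of integral ideals of $K$. The point is then simply that the principal ideal $\Delta\D$ is trivial in the class group $\Cl(K)$, so that the class of $D_{L/K}$ equals the class of $\id f_\Delta^{-2}$, which is the square of the class of $\id f_\Delta^{-1}$. Hence $D_{L/K}$ represents a square in $\Cl(K)$.

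There is no real obstacle here; the entire content has been packed into Theorem~\ref{thm:discriminant-formula}, and this corollary is just the observation that dividing a principal ideal by a perfect square of an ideal leaves a class that is itself a square. (One might add the remark that, conversely, every square ideal class is of this form, since every ideal $\id f$ occurs as $\id f_\Delta$ for a suitable $\Delta$ — for instance for $\Delta=m^2\id f^2$ with $m\in\Z$ chosen so that $4m^2\mid \Delta$ forces the square-modulo-$4$ condition — but this is not needed for the statement as given.)
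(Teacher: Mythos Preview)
Your proof is correct and follows exactly the paper's route: apply~\eqref{eq:proper-discriminant-formula} and note that $\Delta\D$ is principal, so $[D_{L/K}]=[\id f_\Delta^{-1}]^2$ in $\Cl(K)$. (Your parenthetical aside about the converse is garbled---writing ``$\Delta=m^2\id f^2$'' with $\id f$ an ideal does not typecheck---but, as you yourself note, it is not needed for the corollary.)
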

Indeed, the square of the class of $D_{K(\sqrt \Delta)/K}$ equals the
square of the ideal class of $\id f_\Delta$.  The corollary is due to
Hecke~\cite[\S63, Satz~177]{Hecke}, who proved it in fact for
arbitrary (not necessarily quadratic) relative extensions.

For the $\id f_\Delta$ we find the following.
\begin{Corollary}
  \label{prop:fD-class-invariance}
  The $\id f_\Delta$, for $\Delta$ in a given class
  of~$\units K/{\units K}^2$ belong all to one and the same ideal
  class of $K$. More precisly, one has
  \begin{equation*}
    \id f_{a^2\Delta}=a\id f_\Delta
  \end{equation*}
  for any $\Delta$ and $a$ in $K$ such that $\Delta$ and $a^2\Delta$
  are discriminants.
\end{Corollary}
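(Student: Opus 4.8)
The plan is to read off both assertions from the discriminant formula~\eqref{eq:proper-discriminant-formula} of Theorem~\ref{thm:discriminant-formula}. I note first that the second, more precise statement implies the first: if $\Delta_1$ and $\Delta_2$ are discriminants lying in the same class of $\units K/{\units K}^2$, then $\Delta_2=a^2\Delta_1$ for some $a\in\units K$, whence $\id f_{\Delta_2}=a\,\id f_{\Delta_1}$ exhibits $\id f_{\Delta_2}$ as a representative of the same ideal class as $\id f_{\Delta_1}$. So it is enough to establish the displayed identity $\id f_{a^2\Delta}=a\,\id f_\Delta$.

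For this I would set $L=K(\sqrt\Delta)$ and observe that $K(\sqrt{a^2\Delta})=L$ as well, so that the relative discriminant $D_{L/K}$ is the same whether it is computed from $\Delta$ or from $a^2\Delta$. Both $\Delta$ and $a^2\Delta$ are discriminants in $K$ by hypothesis, and formula~\eqref{eq:proper-discriminant-formula} remains valid in the degenerate case that $\Delta$ is a square and $L=K$ (both sides then being the unit ideal); applying it to the two discriminants, and reading $\Delta$ and $a^2\Delta$ as the principal fractional ideals they generate, I obtain
\begin{equation*}
  \Delta/\id f_\Delta^2=D_{L/K}=(a^2\Delta)/\id f_{a^2\Delta}^2 .
\end{equation*}
Cancelling the fractional ideal $\Delta$ then yields $\id f_{a^2\Delta}^2=a^2\,\id f_\Delta^2=(a\,\id f_\Delta)^2$, an equality of fractional ideals of $K$.

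It remains to pass from this equality of squares to $\id f_{a^2\Delta}=a\,\id f_\Delta$, and this is the only point needing a little care. The group of nonzero fractional ideals of $K$ is free abelian on the prime ideals, hence torsion-free, so $\id x^2=\id y^2$ forces $\id x=\id y$; taking $\id x=\id f_{a^2\Delta}$ and $\id y=a\,\id f_\Delta$ completes the argument (and shows in passing that the a priori merely fractional ideal $a\,\id f_\Delta$ is in fact integral). Apart from this, and the bookkeeping of invoking~\eqref{eq:proper-discriminant-formula} uniformly for ramified, unramified and trivial extensions, I expect no real difficulty: essentially all the work is already contained in Theorem~\ref{thm:discriminant-formula}.
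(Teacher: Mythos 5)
Your proposal is correct and follows essentially the same route as the paper's primary argument: apply the formula $D_{L/K}=\Delta/\id f_\Delta^2$ to both $\Delta$ and $a^2\Delta$, cancel, and extract the square root in the (free abelian, hence torsion-free) group of fractional ideals. The paper additionally sketches a second, more direct proof using only the defining maximality of $\id f_\Delta$ (namely that $a^2\Delta$ is a square modulo $4a^2\id f_\Delta^2$ and that $\id f_{a^2\Delta}$ cannot exceed $a\id f_\Delta$), but your version is complete as it stands.
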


\begin{proof}
  If $\Delta$ and $\Delta'$ are in the same class
  of~$\units K/{\units K}^2$, then,
  using~\eqref{eq:proper-discriminant-formula}, we find
  $\id f_\Delta^2=a^2\id f_{\Delta'}$ for some $a$ in~$\units K$,
  which implies $\id f_\Delta=a\id f_{\Delta'}$.  However, one can
  prove the corollary also more directly as follows.  If $a$ is a
  nonzero integer in $K$ then $\id f_{a^2\Delta}=a\id f_\Delta$ (since
  $a\Delta$ is divisible by the square of $a\id f_\Delta$ a square
  modulo $4a^2\id f_\Delta^2$, and on the other hand
  $\id f_{a^2\Delta}$ can obviously not larger than $a\id
  f_\Delta$).
  Therefore, if $\Delta_i$ ($i=1,2$) are in the same class, i.e.~if
  $a_1^2\Delta_1=a_2^2\Delta_2$ for integers $a_i$, we conclude that
  $a_1\id f_{\Delta_1}=a_2\id f_{\Delta_2}$.
\end{proof}

In fact, one can say more about the $\id f_{\Delta}$. The ring of
integers $\id O$ of $K(\sqrt \Delta)$, being in general not free as
module over $\D$, can be written as $\id O=\id g a \oplus \D b$ with a
suitable fractional ideal $\id g$ and $a$, $b$ in $\id O$ if $\Delta$
is not a square( and otherwise as $\id O = \D = \id g a$).  The ideal
$\id g$ is not unique, but its ideal class is, which is called the
{\em Steinitz invariant of the $\D$-module
  $\id O$}~\cite[Thm. 13]{Froehlich-Taylor}.

\begin{Proposition}
  \label{prop:Steinitz-class}
  The $\id f_\Delta^{-1}$, for $\Delta$ in a given class
  of~$\units K/{\units K}^2$ belong to the Steinitz invariant of the ring
  of integers of $K(\sqrt \Delta)$ as module over $\D$.
\end{Proposition}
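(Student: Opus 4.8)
The plan is to exhibit an explicit pseudo-basis of the ring of integers $\id O$ of $L=K(\sqrt\Delta)$ over $\D$ from which the Steinitz invariant can be read off directly. Concretely, I will show that
\[
  \id O \;=\; \id f_\Delta^{-1}\cdot\tfrac{b+\sqrt\Delta}{2}\ \oplus\ \D\cdot 1,
\]
where $b$ is any element of $\D$ with $b^2\equiv\Delta\bmod 4\id f_\Delta^2$ (one exists by the very definition of $\id f_\Delta$). Since the Steinitz invariant of $\id g\alpha\oplus\D\beta$ is by definition the ideal class of $\id g$, this gives the assertion at once, with $\id g=\id f_\Delta^{-1}$. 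That this class is independent of the chosen representative $\Delta$ of its class in $\units K/{\units K}^2$ is then exactly Corollary~\ref{prop:fD-class-invariance}, and is in any case forced because $L$ depends only on that class.

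For the inclusion $\supseteq$, I would first note that $\id f_\Delta\mid b$ — since $\id f_\Delta^2$ divides both $\Delta$ (by definition of $\id f_\Delta$) and $b^2-\Delta$, hence $b^2$ — and that $\tfrac{b^2-\Delta}{4}$, which lies in $\D$ because $\Delta$ is a square modulo~$4$, is divisible by $\id f_\Delta^2$. Hence, for any $c\in\id f_\Delta^{-1}$, the element $c\cdot\tfrac{b+\sqrt\Delta}{2}$ has trace $cb\in\D$ and norm $c^2\cdot\tfrac{b^2-\Delta}{4}\in\D$, so it is integral over $\D$ and therefore lies in $\id O$; together with $1\in\id O$ this settles $\supseteq$. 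The sum is direct because $1$ and $\tfrac{b+\sqrt\Delta}{2}$ are linearly independent over $K$.

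It then remains to prove equality after localizing at an arbitrary prime $\id p$ of $K$, that is,
\[
  \D_{\id p}\cdot 1\ +\ \pi^{-v_{\id p}(\id f_\Delta)}\D_{\id p}\cdot\tfrac{b+\sqrt\Delta}{2}\ =\ \id O\D_{\id p},
\]
with $\pi$ a uniformizer at $\id p$. The input here is the explicit $\D_{\id p}$-basis $1,\omega$ of $\id O\D_{\id p}$ constructed in the proof of Theorem~\ref{thm:discriminant-formula} (with $\omega=\sqrt{\Delta_1}$ when $\id p$ is odd or when $\id p\mid 2,\Delta_1$, and $\omega=(c_0+\sqrt{\Delta_1})/\pi^{t}$ otherwise), together with the value of $v_{\id p}(\id f_\Delta)=\tfrac12\bigl(v_{\id p}(\Delta)-v_{\id p}(D_{L/K})\bigr)$, which one obtains from Theorem~\ref{thm:discriminant-formula} and the values of $(\omega-\omega')^2$ computed in its proof. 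A short case distinction then shows that $\pi^{-v_{\id p}(\id f_\Delta)}\tfrac{b+\sqrt\Delta}{2}$ lies in $\D_{\id p}+\D_{\id p}\omega$ and generates it together with~$1$: when $\omega=\sqrt{\Delta_1}$ this reduces to the bound $v_{\id p}(b)\ge\lfloor v_{\id p}(\Delta)/2\rfloor$, which follows from $b^2\equiv\Delta$ to the appropriate power of $\id p$; when $\omega=(c_0+\sqrt{\Delta_1})/\pi^{t}$ it reduces to the congruence $b\equiv\pm c_0\,\pi^{v_{\id p}(\Delta)/2}$ modulo the relevant power of $\id p$, and the sign is absorbed by replacing $c_0$ with $-c_0$ (which changes neither the proof of Theorem~\ref{thm:discriminant-formula} nor $\id O\D_{\id p}$).

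The only delicate point is this last step at the dyadic primes $\id p$ with $\id p\nmid\Delta_1$, where $\omega=(c_0+\sqrt{\Delta_1})/\pi^{t}$: one must juggle the exponent $t$, the exact value $v_{\id p}(\id f_\Delta)=v_{\id p}(\Delta)/2-v_{\id p}(2)+t$, and the sign ambiguity of the square root $c_0$ of $\Delta_1$ modulo $\id p^{2t}$. Everything else is bookkeeping, and once the local equalities are in hand the global identity $\id O=\id f_\Delta^{-1}\cdot\tfrac{b+\sqrt\Delta}{2}\oplus\D\cdot 1$ follows, yielding the Proposition.
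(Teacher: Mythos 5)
Your argument is correct, but it is not the route the paper takes. You prove the proposition by \emph{constructing} a pseudo-basis, namely $\id O=\D\oplus\id f_\Delta^{-1}\tfrac{b+\sqrt\Delta}{2}$ with $b^2\equiv\Delta\bmod 4\id f_\Delta^2$, and then reading off the Steinitz class $[\id f_\Delta^{-1}]$; this identity is true (it is exactly part (1) of Proposition~\ref{prop:description-of-orders}, which the paper later establishes via Lemma~\ref{lem:key-lemma} on $\D$-lattices containing $\D$), and your local verification against the bases $1,\omega$ from the proof of Theorem~\ref{thm:discriminant-formula} can be pushed through, including the dyadic sign issue you flag, since $b^2\pi^{-l}\equiv c_0^2\bmod\id p^{2t}$ does force $b\pi^{-l/2}\equiv\pm c_0\bmod\id p^{t}$. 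The paper instead argues \emph{without} exhibiting any basis of $\id O$: it takes an arbitrary Steinitz decomposition $\id O=\id g a\oplus\D b$, localizes to get $D_{L/K}=\id g^2(ab'-a'b)^2$, compares with $D_{L/K}=\Delta/\id f_\Delta^2$ from Theorem~\ref{thm:discriminant-formula}, and observes that $\Delta/(ab'-a'b)^2$ is a square in $\units K$ because $ab'-a'b$ lies in $\sqrt\Delta\cdot K$; hence $\id g$ and $\id f_\Delta^{-1}$ differ by a principal ideal. That argument is about six lines and needs no case analysis at all, while yours costs the same local bookkeeping a second time but buys the explicit description of the maximal order that the paper only derives later. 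If you keep your route, note that after establishing the inclusion $\supseteq$ you could skip the prime-by-prime case distinction entirely: the pseudo-basis $\{\D\cdot 1,\ \id f_\Delta^{-1}\tfrac{b+\sqrt\Delta}{2}\}$ has discriminant $\id f_\Delta^{-2}\Delta=D_{L/K}$, so the sublattice it spans has index one in $\id O$.
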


\begin{proof}
  The case that $\Delta$ is a square being trivial we assume that
  $L=K(\sqrt \Delta)$ has degree two over~$K$.  For a prime ideal
  $\id p$ of $K$ we have
  $\id O\D_{\id p} = \id g\D_{\id p}a+\D_{\id p}b$, and as in the
  proof of Theorem~\ref{thm:discriminant-formula} we find therefore
  $D_{L/K}\D{\id p}=\pi^{2g}(ab'-a'b)^2$, where
  $\pi\D_{\id p}=\id p\D_{\id p}$, and
  $\id g\D_{\id p} = \pi^{2g}\D_{\id p}$. It follows
  $D_{L/K}=\id g^2 (ab'-a'b)^2$. On the other hand
  $D_{L/K}=\Delta/\id f_{\Delta}$, and hence
  $(\id g/\id f_\Delta)^2=\Delta/(ab'-a'b)^2$. But
  $\Delta/(ab'-a'b)^2$ is a square in $\units K$ (since $ab'-a'b$,
  being not invariant under the Galois group of $L/K$, is in $L$ but
  not in $K$), and hence $\id g/\id f_\Delta$ is a principal
  ideal. This proves the proposition.
\end{proof}

\subsection{Fundamental discriminants}

There is still a dichotomy left since the notion ``Fundamental
discriminant'' is missing for general relative quadratic
extensions. More precisely, we would like to have, for a given class
$C$ in $\units K/{\units K}^2$, a discriminant~ $\Delta_ 0 $ such that
$\Delta_0f^2$ runs through all discriminants in the given class when
$f$ runs through all nonzero integers of $K$. Such a $\Delta_0$ would
be uniquely determined by this property modulo ${\units \D}^2$, and it
would generate the discriminant ideal $D_{L/K}$ of the quadratic
extension of~$K$ determined by~$C$.

Such a $\Delta_0$ will not in general exist. In fact, it is not hard
to see that it exists if and only if the ideal class of the
$\id f_\Delta$ ($\Delta$ in~$C$) is trivial (see
Theorem~\ref{thm:criteria-for-principal}). However, such a $\Delta_0$
exists for any class~$C$ as an id\`ele of~$K$, and it is useful to
study its id\`elic construction since it will give us, amongst
others, further criteria for the existence of fundamental discriminant
for classes in $\units K/{\units K}^2$.

For a valuation~$v$ of~$K$ let $K_v$ denote the completion of~$K$ at
$v$, and $\D_v$ the ring of integers in~$K_v$ (with the convention
$\D_v=K_v$ if $v$ is real or complex). We use~$I$ for the id\`ele
group of~$K$, and $U$ for the direct product over all $\units \D_v$.
Finally we identify~$K$ with its image in $I$ under the diagonal
embedding.

Let $C$ be a class in~$\units K/{\units K}^2$. Let $D_CU^2$ be the
element of
\begin{equation*}
  H_K:=\units K I^2/U^2
\end{equation*}
which at a finite place~$v$ is defined as
\begin{equation}
  \label{eq:fundamental-discriminant}
  (D_C)_v = \Delta/\pi_v^{2v(\id f_{\Delta})}
  ,
\end{equation}
at a real place as $(D_C)_v = \sym{sign}\left(\sigma(\Delta)\right)$
with the embedding $\sigma$ corresponding to~$v$, and at a complex
place as~$1$. Here $\pi_v$ is a prime element of~$K_v$ (and therefore
unique up to multiplication by a unit in~$\units \D_v$), and $\Delta$
is an element of~$C$. Note that the coset $(D_C)_v{\units \D_v}^2$
does not depend on the choice of~$\pi$.

\begin{Lemma}
  The coset $D_CU^2$ does not depend on the choice of~$\Delta$ in~$C$.
\end{Lemma}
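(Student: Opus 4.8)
The plan is to reduce the statement to a place-by-place computation. Any two elements of the square class $C$ differ by a square in $\units K$, so it suffices to fix $\Delta\in C$ and $a\in\units K$, put $\Delta':=a^2\Delta$, and check that the id\`ele defined by~\eqref{eq:fundamental-discriminant} from $\Delta'$ and the one defined from $\Delta$ differ, component by component, by a square of a local unit; this is exactly what it means for the two to be congruent modulo $U^2=\prod_v{\units{\D_v}}^2$.

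First I would dispose of the archimedean places, where there is nothing to do: at a real place with corresponding embedding $\sigma$ the defining value is $\sym{sign}\bigl(\sigma(a^2\Delta)\bigr)=\sym{sign}\bigl(\sigma(\Delta)\bigr)$ since $\sigma(a)^2>0$, and at a complex place both values equal $1$, so the two id\`eles even have identical archimedean components. The content is therefore at a finite place $v$. Fixing a prime element $\pi_v$ of $K_v$, the key input is that $\id f_{\Delta'}=a\id f_\Delta$, which is Corollary~\ref{prop:fD-class-invariance}; taking $v$-valuations gives $v(\id f_{\Delta'})=v(a)+v(\id f_\Delta)$. Substituting into~\eqref{eq:fundamental-discriminant}, the ratio of the $v$-components of the two id\`eles equals $\frac{\Delta'\pi_v^{-2v(\id f_{\Delta'})}}{\Delta\,\pi_v^{-2v(\id f_\Delta)}}=a^2\pi_v^{-2v(a)}=\bigl(a\pi_v^{-v(a)}\bigr)^2$, and since $a\pi_v^{-v(a)}$ has valuation $0$ it lies in $\units{\D_v}$. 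Hence the two $v$-components differ by a unit square at every finite place, and combining this with the archimedean check the two id\`eles differ by an element of $U^2$, which is the assertion.

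The only point that needs care, and hence the main (rather modest) obstacle, is having the scaling law $\id f_{\Delta'}=a\id f_\Delta$ available in exactly the required generality — for an arbitrary nonzero $\Delta\in\units K$ and an arbitrary $a\in\units K$, rather than only for integral discriminants. This is what the direct argument in the proof of Corollary~\ref{prop:fD-class-invariance} gives, once one reads $\id f_\Delta$ off Theorem~\ref{thm:discriminant-formula} as $\id s\id t/2$ when $\Delta$ is not a square modulo $4$; with that in hand, everything else is formal manipulation of valuations.
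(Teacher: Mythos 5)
Your proof is correct and follows essentially the same route as the paper's: both reduce the claim to the scaling law $\id f_{a^2\Delta}=a\id f_\Delta$, which is in turn a consequence of the relative-discriminant formula~\eqref{eq:proper-discriminant-formula}. The only cosmetic difference is that you verify place by place that the ratio of the two id\`eles is the square of a local unit, whereas the paper observes that the ratio is a unit id\`ele which is also a square in $I$ and (implicitly) invokes $I^2\cap U=U^2$.
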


\begin{proof}
  We have $D_C=\Delta/\varphi^2$, where $\varphi$ is the id\`ele
  associated to $\id f_{\Delta}$,
  i.e.~$\varphi_v=\pi_v^{v(\id f_{\Delta})}$. If $\Delta'$ is another
  discriminant in~$C$, then~\eqref{eq:proper-discriminant-formula}
  implies $\Delta/\varphi^2=\varepsilon\Delta'/\varphi'^{2}$, where
  $\varphi'$ is the id\`ele associated to~$\id f_{\Delta'}$ and
  $\varepsilon$ is in~$U$. Since $\Delta$ and $\Delta'$ differ by a
  square we conclude that $\varepsilon$ is in fact in $U^2$, which
  proves the lemma.
\end{proof}

We call any representative $D_C$ of $D_CU^2$ (by slight abuse of
language) {\em the fundamental discriminant of~$C$}. This is justified
by the following propositions.

First of all, as for the case that $K$ is the field of rational
numbers the fundamental discriminant $D_C$ uniquely determines $C$
(and hence the extension~$K(\sqrt \Delta)$) since
$C = D_CI^2\cap \units K$, in other words:

\begin{Proposition}
  The map $C\rightarrow D_CU^2$ defines an injection
  $\units K/{\units K}^2\rightarrow H_K$.
\end{Proposition}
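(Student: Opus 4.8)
The plan is to establish the equivalent identity $C = D_CI^2\cap\units K$ for every class $C\in\units K/{\units K}^2$; injectivity of $C\mapsto D_CU^2$ then follows formally, since $D_{C_1}U^2=D_{C_2}U^2$ forces $D_{C_1}I^2=D_{C_2}I^2$ (as $U^2\subseteq I^2$), hence $D_{C_1}I^2\cap\units K=D_{C_2}I^2\cap\units K$, i.e. $C_1=C_2$.

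Fix $\Delta\in C$ and, as in the proof of the preceding Lemma, write $D_C=\Delta/\varphi^2$ at the finite places, where $\varphi$ is the idèle attached to $\id f_\Delta$; at a real place $D_C$ and $\Delta$ differ by the positive factor $|\sigma(\Delta)|$, and at a complex place by an element of $\units{\D_v}=(\units{\D_v})^2$, so they differ by an element of $U^2$ there. Altogether $\Delta/D_C$ lies in $I^2$, say $\Delta=D_C\psi^2$ with $\psi\in I$. This already yields $C=\Delta{\units K}^2\subseteq D_CI^2\cap\units K$, using ${\units K}^2\subseteq I^2$.

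For the reverse inclusion take $\alpha\in D_CI^2\cap\units K$, say $\alpha=D_C\eta^2$ with $\eta\in I$. Then $\alpha/\Delta=(\eta/\psi)^2$ lies in $I^2$ and simultaneously in $\units K$. The decisive input is the local--global principle for squares, namely $I^2\cap\units K={\units K}^2$: an element of $\units K$ that becomes a square in every completion is already a square in $\units K$. Granting this, $\alpha/\Delta\in{\units K}^2$, so $\alpha\in\Delta{\units K}^2=C$, and the two inclusions give $C=D_CI^2\cap\units K$, completing the proof.

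The only genuine obstacle is the statement $I^2\cap\units K={\units K}^2$. If $\beta\in\units K$ is a square in $K_v$ for all $v$ but not in $\units K$, then $M:=K(\sqrt\beta)$ is a quadratic extension of $K$, and for every finite place $v$ unramified in $M/K$ the hypothesis forces $v$ to split completely in $M$; thus all but finitely many primes of $K$ split in $M$, contradicting the Chebotarev density theorem, which assigns density $1/[M:K]=1/2$ to the set of completely split primes. (Equivalently, one may invoke the injectivity of $\units K/{\units K}^2\to\prod_v\units{K_v}/{\units{K_v}}^2$ directly.) Everything else in the argument is a formal manipulation of idèles together with formula~\eqref{eq:proper-discriminant-formula}.
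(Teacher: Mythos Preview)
Your proof is correct and follows exactly the approach the paper indicates: the paper simply asserts $C = D_CI^2\cap\units K$ as the reason for injectivity without further argument, and you have supplied the details, including the crucial input $I^2\cap\units K={\units K}^2$ (the local--global principle for squares) that the paper leaves implicit.
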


Moreover, the usual properties of rational fundamental discriminants
have also their equivalents as follows:

\begin{Proposition}
  \label{eq:fundamental-discriminant-properties}
  Let $C$ be a class in~$\units K/{\units K}^2$ and $D_C$ its
  fundamental discriminant.
  \begin{enumerate}
  \item Every discriminant $\Delta$ in~$C$ can be written uniquely
    mod~$U^2$ as $\Delta=D_C\alpha^2$ with an id\`ele $\alpha$ whose
    valuation at a finite place is non-negative.
  \item $D_C$ is mapped onto~$D_{K(\sqrt \Delta)/K}$ under the natural
    map which takes $I/U^2$ onto the group of fractional
    ideals\footnote{This is the map which takes an $\alpha U^2$ to the
      product of all~$\id p^{v(\alpha_v)}$, where $\id p$ runs through
      the prime ideals of~$K$ and $v$ is the place corresponding
      to~$\id p$.} of~$K$.
  \item $(D_C)_v$ is a square mod~$4\D_v$ at every finite place
    of~$K$,
  \item For every real place the sign of $(D_C)_v$ equals the sign of
    $\sigma(\Delta)$ for every discriminant $\Delta$ in~$C$, with the
    embedding $\sigma$ of $K$ corresponding to~$v$.
  \end{enumerate}
\end{Proposition}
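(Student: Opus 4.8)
The plan is to read off all four assertions from the definition of~$D_C$ together with Theorem~\ref{thm:discriminant-formula}, used in the form $D_{K(\sqrt\Delta)/K}=\Delta/\id f_\Delta^2$, and the defining property of $\id f_\Delta$, namely that $\Delta\equiv x^2\bmod 4\id f_\Delta^2$ for a suitable $x\in\D$. Everything comes down to place-by-place computations, and since $D_CU^2$ does not depend on the representative $\Delta$ of~$C$, I may and will use $\Delta$ itself in the defining formula~\eqref{eq:fundamental-discriminant}.

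For~(1) I would take $\alpha$ to be the id\`ele $\varphi$ attached to $\id f_\Delta$, i.e.\ $\varphi_v=\pi_v^{v(\id f_\Delta)}$ at the finite places and $\varphi_v=1$ at the infinite ones. One then checks that $\Delta/(D_C\varphi^2)$ has component $1$ at every finite place, a positive real number $|\sigma(\Delta)|$ at every real place, and a nonzero complex number at every complex place, hence lies in $U^2$; this yields the existence. For the uniqueness, if $\alpha$ and $\beta$ both satisfy the requirements, then $(\alpha\beta^{-1})^2\in U^2$, which forces $\alpha_v\beta_v^{-1}\in\units{\D_v}$ at every place and $v(\alpha_v)=v(\beta_v)$ at every finite place; thus $\alpha$ is determined modulo $U^2$, and in particular its divisor equals $\id f_\Delta$.

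Statement~(2) is then immediate: the natural map to fractional ideals sends $D_CU^2$ to $\prod_{\id p}\id p^{v((D_C)_v)}$, and $v((D_C)_v)=v(\Delta)-2v(\id f_\Delta)=v(\Delta/\id f_\Delta^2)$, which by~\eqref{eq:proper-discriminant-formula} is the $\id p$-valuation of $D_{K(\sqrt\Delta)/K}$. For~(3) I fix a finite place~$v$ above~$\id p$ and an $x\in\D$ with $\Delta\equiv x^2\bmod 4\id f_\Delta^2$; since $\id f_\Delta^2\mid\Delta$ we have $v(\Delta)\ge 2v(\id f_\Delta)$, and together with $v(\Delta-x^2)\ge v(4)+2v(\id f_\Delta)$ this forces $v(x)\ge v(\id f_\Delta)$, so that $y:=x\pi_v^{-v(\id f_\Delta)}\in\D_v$; then $(D_C)_v=y^2+(\Delta-x^2)\pi_v^{-2v(\id f_\Delta)}$ with the second summand in $4\D_v$, whence $(D_C)_v\equiv y^2\bmod 4\D_v$. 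Finally,~(4) is just the definition of $(D_C)_v$ at a real place, combined with the remark that any two discriminants in~$C$ differ by a square in $\units K$ and therefore have images of the same sign under every real embedding.

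All of these computations are routine. The one spot that requires a little care is the local estimate in~(3), where one must verify that $\id f_\Delta$ divides $x$ locally (so that $y$ is an algebraic integer at~$\id p$), using both $\id f_\Delta^2\mid\Delta$ and the congruence $\Delta\equiv x^2\bmod 4\id f_\Delta^2$; apart from this, and the bookkeeping with representatives modulo~$U^2$ in~(1), I do not expect any real obstacle.
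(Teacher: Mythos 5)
Your proof is correct and takes essentially the same route the paper intends: the paper dismisses this proposition as an ``immediate consequence'' of the definition~\eqref{eq:fundamental-discriminant} together with~\eqref{eq:proper-discriminant-formula} for part~(2), and your place-by-place verification --- including the valuation estimate in~(3) showing that $\id f_\Delta$ divides $x$ locally, which is the only step needing any care --- simply writes out those details. One small imprecision in the uniqueness part of~(1): from $(\alpha\beta^{-1})^2\in U^2$ you correctly get $\alpha\beta^{-1}\in U$, which determines $\alpha$ modulo $U$ (equivalently, determines $\alpha^2$ modulo $U^2$ and the divisor of $\alpha$, namely $\id f_\Delta$) rather than modulo $U^2$ as you assert; this is really an ambiguity in the proposition's own phrasing, and what you actually establish is the true and intended content.
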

\begin{proof}
  These statements are immediate consequences of the
  definition~\eqref{eq:fundamental-discriminant} of~$D_C$, where, for
  the second one, one needs also the
  formula~\eqref{eq:proper-discriminant-formula}.
\end{proof}

It is not difficult to verify that our~$D_C$ coincides with the
enhanced notion of relative discriminant for arbitrary extensions as
proposed in~\cite{Froehlich-60} (whose definition is slightly
different from the one given here).
 
We call $D_C$ {\em principal} if it is represented by a number
in~$\units K$, i.e.~if $D_CU^2=\Delta_0 U^2$ for some number
$\Delta_0$. This number is then a discriminant in~$C$ (by~(3) of the
preceding proposition). Moreover, if $D_C$ is principal, say
represented by $\Delta_0$, then~$\Delta_0 a^2$ runs through all
discriminants in $C$ when $a$ runs through the nonzero integers
of~$K$. (Namely, if $\Delta$ is a discriminant in~$C$ then
$\Delta/\Delta_0=\alpha^2$ for some~$\alpha$ in~$I$ as in~(1), and on
the other hand~$\Delta/\Delta_0$ is in~${\units K}^2$, so that
$\alpha$ is in fact in $\D$.)

\begin{Theorem}
  \label{thm:criteria-for-principal}
  Let $C$ be a class in $\units K/{\units K}^2$, let $D_C$ denote its
  fundamental discriminant, and let $\Delta$ in~$C$. Then the
  following statements are equivalent:
  \begin{enumerate}
  \item $D_C$ is principal.
  \item The ideal $\id f_\Delta$ is principal.
  \item The ring of integers of $K(\sqrt \Delta)$ is a free module
    over~$\D$.
  \end{enumerate}
\end{Theorem}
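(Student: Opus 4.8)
The plan is to establish the two equivalences $(2)\Leftrightarrow(3)$ and $(1)\Leftrightarrow(2)$ separately, using Proposition~\ref{prop:Steinitz-class}, Theorem~\ref{thm:discriminant-formula}, Corollary~\ref{prop:fD-class-invariance}, and parts~(2) and~(3) of Proposition~\ref{eq:fundamental-discriminant-properties}. First one disposes of the case that $\Delta$ is a square: then $L=K(\sqrt\Delta)=K$, so $\id O=\D$ is trivially $\D$-free, $\id f_\Delta$ is the principal ideal generated by a square root of $\Delta$, and $D_C$ is the unit class; all three statements hold. From now on assume $L/K$ has degree two.

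The equivalence $(2)\Leftrightarrow(3)$ is essentially a citation. By Proposition~\ref{prop:Steinitz-class} the ideal class of $\id f_\Delta^{-1}$ is the Steinitz invariant of the ring of integers $\id O$ of $L$ as a $\D$-module. A finitely generated torsion-free module of rank $n$ over the Dedekind domain $\D$ is isomorphic to $\D^{\,n-1}\oplus\id g$ with $\id g$ representing its Steinitz invariant, and it is free precisely when that invariant is trivial. Hence $\id O$ is $\D$-free if and only if $\id f_\Delta$ is principal; no real work is needed here.

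For $(2)\Rightarrow(1)$ I would use the description $D_C=\Delta/\varphi^{2}$ in $H_K$ from the proof of the Lemma preceding the theorem, where $\varphi$ is the idèle with $\varphi_v=\pi_v^{v(\id f_\Delta)}$. If $\id f_\Delta=(\gamma)$ with $\gamma$ a nonzero element of $\D$, then $\gamma$ (embedded diagonally) and $\varphi$ generate the same fractional ideal, so $\varphi=u\gamma$ with $u\in U$; squaring and passing to $H_K$ gives $D_CU^{2}=(\Delta/\gamma^{2})U^{2}$, so $D_C$ is represented by the number $\Delta/\gamma^{2}\in\units K$ and is therefore principal. For the converse $(1)\Rightarrow(2)$, suppose $D_C$ is represented by a number $\Delta_0\in\units K$; by the remark preceding the theorem $\Delta_0$ is a discriminant lying in $C$, so $K(\sqrt{\Delta_0})=L$. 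On the one hand, Proposition~\ref{eq:fundamental-discriminant-properties}(2) identifies the image of $D_C$ under the natural map from $I/U^{2}$ to the group of fractional ideals with $D_{L/K}$, and since $\Delta_0$ represents $D_C$ this image is the principal ideal $(\Delta_0)$. On the other hand, Theorem~\ref{thm:discriminant-formula} applied to the discriminant $\Delta_0$ gives $D_{L/K}=(\Delta_0)\,\id f_{\Delta_0}^{-2}$. Comparing the two forces $\id f_{\Delta_0}^{2}=(1)$, hence $\id f_{\Delta_0}=(1)$; and by Corollary~\ref{prop:fD-class-invariance} the ideal class of $\id f_\Delta$ equals that of $\id f_{\Delta_0}$, which is trivial, so $\id f_\Delta$ is principal.

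I expect the only genuinely delicate point to be the implication $(1)\Rightarrow(2)$: principality of $D_C$ is stronger than mere triviality of the ideal class of $\id f_\Delta$ — it hands us a specific representing discriminant $\Delta_0\in C$ whose own invariant $\id f_{\Delta_0}$ is the unit ideal — so one must route the argument through such a $\Delta_0$ and then transport the conclusion back to the arbitrary $\Delta\in C$ via the class-invariance of Corollary~\ref{prop:fD-class-invariance}. Everything else reduces to bookkeeping with the idèle-to-ideal map and with the discriminant formula~\eqref{eq:proper-discriminant-formula} already established.
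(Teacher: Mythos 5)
Your proof is correct and follows essentially the same route as the paper's: Proposition~\ref{prop:Steinitz-class} for $(2)\Leftrightarrow(3)$, the id\`elic description $D_C=\Delta/\varphi^2$ for $(2)\Rightarrow(1)$, and Proposition~\ref{eq:fundamental-discriminant-properties}(2) together with the discriminant formula for $(1)\Rightarrow(2)$. The only (harmless) difference is that for $(1)\Rightarrow(2)$ the paper writes $\Delta=D_Ca^2$ and reads off $\id f_\Delta=a\D$ directly, whereas you first deduce $\id f_{\Delta_0}=\D$ for the representing discriminant and then transport the conclusion via Corollary~\ref{prop:fD-class-invariance}.
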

\begin{proof}
  (1) implies $\Delta=D_Ca^2$ for some integer~$a$ in $K$. Since
  $D_C\D=D_{K(\sqrt \Delta)/K}$ (by
  Prop.~\ref{eq:fundamental-discriminant-properties}~(2)) and
  $\Delta=D_{K(\sqrt \Delta)/K}\id f_{\Delta}$ we conclude
  $\id f_{\Delta}=a\D$. Vice versa, if $\id f_\Delta=a\D$ for some
  integer~$a$, then $\Delta/a^2{\units \D_v}^2=D_CU^2$ for every $v$.
  (2) and (3) are equivalent since by
  Proposition~\ref{prop:Steinitz-class} the ring of integers of
  $K(\sqrt \Delta)$ is isomorphic as $\D$-module
  to~$\id f_\Delta^{-1}\oplus \D$.
\end{proof}
 


For the reader who wishes to avoid the id\`elic setting for the notion
of fundamental discriminants the following remark may be useful. The
natural map
$H_K\rightarrow \units K I^2/I^2 \isom \units K/{\units K}^2$ defines
an exact sequence
\begin{equation*}
  1\rightarrow
  I^2/U^2
  \rightarrow
  H_K
  \rightarrow
  \units K/{\units K}^2
  \rightarrow 1
  .
\end{equation*}
This sequence splits, a section is given by $C \mapsto D_CU^2$. We
therefore obtain a isomorphism
$H_K\isom I^2/U^2 \times \units K/{\units K}^2 \isom J_K^2 \times
\units K/{\units K}^2=:H_K'$,
where $J_K$ is the group of nonzero fractional ideals of~$K$. (For the
latter isomorphism note that the natural map
$I^2/U^2\rightarrow \left(I/U\right)^2$ is an isomorphism since
$I^2\cap U=U^2$.) We leave it to the interested reader to go again
through this section while replacing $H_K$ systematically by~$H_K'$.

\subsection{The Gr\"o{\ss}encharakter of a relative quadratic extension}

We extend the Dirichlet characters $\leg\Delta*$ from the theory where
$K$ is the field of rational numbers to arbitrary number fields $K$ as
follows: Let $\Delta$ be a discriminant of~$K$. For a prime ideal
$\id p \nmid \Delta$, we set
\begin{equation*}
\leg \Delta {\id p}
=
\begin{cases}
+1&\text{if $\Delta$ is a square modulo~$4\id p$}
\\
-1&\text{otherwise.}  
\end{cases}
\end{equation*}
Of course, for $\id p \nmid 2$ the number $\Delta$ is a square
modulo~$4\id p$ if and only if it is square modulo~$\id p$ as follows
from the Chinese remainder theorem.  We continue $\leg {\Delta}{*}$ to
a homomorphism of the group of fractional ideals relatively
prime\footnote{A fractional ideal is called {\em relatively prime} to
  $\Delta$ is it is of the form $\id a/\id b$ with integral ideals
  $\id a$ and $\id b$ both of which have no prime ideal common
  with~$\Delta$.} to $\Delta$ onto the group~$\{\pm1\}$.

\begin{Theorem}
  \label{thm:groessencharakter-property}
  The homomorphism $\leg {\Delta} {*}$ defines a Gr\"o{\ss}encharakter
  modulo~$\Delta$ of infinity type
  $\alpha\mapsto \prod_{\sigma\in M}\sym{sign}\sigma(\alpha)$, where
  $M$ is the set of real embeddings of $K$ with $\sigma(\Delta)<0$.
  Its conductor equals $\Delta/\id f_\Delta^2$.
\end{Theorem}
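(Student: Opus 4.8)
The plan is to verify the three assertions of the theorem one at a time: that $\leg{\Delta}{*}$ factors through ray classes modulo $\Delta$ together with the prescribed sign character at the real places, that it is trivial on the principal ray modulo the conductor $\id c := \Delta/\id f_\Delta^2$, and that $\id c$ is in fact the exact conductor. The first and easiest reduction is to observe that by multiplicativity everything is local: for a prime $\id p \nmid \Delta$ and $\alpha \in \units{K_\id p}$ a unit, $\leg{\Delta}{\alpha\D} = \hilb{K_\id p}{\Delta}{\alpha}$ is the local Hilbert symbol, and at the real places the sign character is exactly the local Hilbert symbol $\hilb{K_v}{\Delta}{\cdot}$ as well. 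So the global statement ``$\leg{\Delta}{*}$ is a Gr\"o\ss encharakter of the stated type and conductor'' is equivalent, via Hilbert reciprocity $\prod_v \hilb{K_v}{\Delta}{\alpha} = 1$ for $\alpha \in \units K$, to a purely local computation of the conductor of $\alpha \mapsto \hilb{K_v}{\Delta}{\alpha}$ at each finite $\id p \mid \Delta$.

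So the heart of the matter is: \emph{for each prime $\id p$ dividing $\Delta$, show that the character $\alpha\mapsto \hilb{K_\id p}{\Delta}{\alpha}$ on $\units{\D_\id p}$ has conductor exponent exactly $v_\id p(\Delta/\id f_\Delta^2) = v_\id p(\Delta) - 2v_\id p(\id f_\Delta)$.} For $\id p \nmid 2$ this is classical and short: writing $v_\id p(\Delta)=l$, the symbol is trivial on units iff $l$ is even and $\Delta/\pi^l$ is a square mod $\id p$ — i.e. exactly when $\id p \nmid \Delta/\id f_\Delta^2$ — and otherwise has conductor exponent $1$; this matches because $v_\id p(\id f_\Delta)=\lfloor l/2\rfloor$ when $\Delta/\pi^{2\lfloor l/2\rfloor}$ is a square mod $\id p$ and $=(l-1)/2$ otherwise, reproducing $v_\id p(\Delta)-2v_\id p(\id f_\Delta) \in \{0,1\}$. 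The dyadic primes $\id p \mid 2$ are the genuine obstacle: there one must compute the conductor of $\hilb{K_\id p}{\Delta}{\cdot}$ on the higher unit filtration $U_0 \supset U_1 \supset \cdots$, and this is precisely where the orthogonality relation $\hilb{K_\id p}{U_i}{U_j}=1$ for $i+j = 2e$ (Theorem~\ref{thm:dyadic-hilbert-symbol}, proved in Appendix~2) is needed. Concretely, one writes $\Delta = \pi^{2s}\Delta_1$ with $\Delta_1$ a unit (using $v_\id p(\Delta)$ even — the odd case being subsumed or handled by a shift), normalizes so that $\Delta_1 \in U_j \setminus U_{j+1}$ modulo squares, and then the orthogonality relation shows $\hilb{K_\id p}{\Delta_1}{U_{2e-j}} = 1$ while a non-degeneracy argument shows it is nontrivial on $U_{2e-j-1}$; translating $j$ back into the quantity $v_\id p(\Delta) - 2v_\id p(\id f_\Delta)$ via the characterization of $\id f_\Delta$ in Theorem~\ref{thm:discriminant-formula} (where $t$ is maximal with $\pi^t \mid 2$ and $\Delta_1$ a square mod $\pi^{2t}$) gives exactly the conductor exponent $v_\id p(D_{L/K})$.

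Thus the structure of the write-up will be: (i) reduce to the local Hilbert-symbol statement and invoke Hilbert reciprocity plus the computation of the infinity type at real places; (ii) dispose of odd $\id p$ by the classical Chinese-remainder argument already flagged in the text; (iii) for $\id p \mid 2$, combine the explicit description of $\id f_\Delta$ from the proof of Theorem~\ref{thm:discriminant-formula} with the orthogonality relation of Theorem~\ref{thm:dyadic-hilbert-symbol} to pin down the conductor exponent. The main obstacle is step (iii): keeping the bookkeeping between the filtration index at which $\Delta_1$ sits, the exponent $t$ appearing in $\id f_\Delta$, and the valuation of the relative discriminant all consistent — the orthogonality relation does the real work, but one still has to check that the symbol is genuinely nontrivial one step below $U_{2e-j}$, which requires the non-degeneracy of the Hilbert pairing on $\units{K_\id p}/\units{K_\id p}^2$ rather than just the vanishing statement.
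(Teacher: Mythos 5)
Your plan follows the paper's proof essentially verbatim: reduce to local Hilbert symbols via the product formula $\prod_v\hilb v\Delta a=1$, read off the infinity type at the real places, treat odd primes classically, and settle the dyadic conductor exponents with the orthogonality relation $\hilb K{U_i}{U_j}=1$ for $i+j=2e$ together with the duality of Theorem~\ref{thm:dyadic-hilbert-symbol} to certify non-triviality one step below. Two cosmetic quibbles: at odd $\id p$ the triviality of $\hilb {\id p}\Delta\cdot$ on local units depends only on the parity of $v_{\id p}(\Delta)$ (not additionally on $\Delta/\pi^l$ being a square mod~$\id p$, which governs split versus inert), and the dyadic case $v_{\id p}(\Delta')=2e+1$ is not really ``subsumed by a shift'' --- it needs the separate two-step application of Theorem~\ref{thm:dyadic-hilbert-symbol} that the paper gives it.
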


We postpone the proof to the end of this section.

According to the theorem the character $\leg \Delta *$ is the
restriction of a primitive Gr\"o{\ss}encharakter
modulo~$\Delta/\id f_\Delta^2$, which we denote in the sequel by
$\uleg \Delta *$. Note that $\uleg \Delta *$ is uniquely determined
by~$\leg \Delta *$.  Indeed, if the fractional ideal $\id a$ is
relatively prime to $\id D:=\Delta/\id f_\Delta^2$, then we can find
an $a$ in~$\units K$ relatively prime to $\id D$ and such that
$\id b:=\id a/a$ is relatively prime to~$\Delta$. (One can take, for
instance, for $\id b$ any prime ideal in the same ideal class as
$\id a$ which doe not divide $\Delta$). But then
$\uleg \Delta {\id a}=\leg \Delta {\id b}\uleg \Delta {a\D}$, and
$\uleg \Delta {a\D}\prod_{\sigma\in M}\sym{sign}\sigma(a)$ depends
only on $a$ modulo~$\id D\D_{\id D}$ (where $\D_{\id D}$ is the
intersection of the localizations~$\D_{\id p}$ of $\D$ at $\id p$
($\id p \mid \id D$)), and hence can be evaluated by replacing~$a$ by
any~$b$ in $a+\id D\D_{\id D}$ which is relatively prime to~$\Delta$.

\begin{Proposition}
  The primitive Gr\"o{\ss}encharakter $\uleg \Delta *$ depends only
  on the class of~$\Delta$ in~$\units K/{\units K}^2$.
\end{Proposition}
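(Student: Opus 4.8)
The plan is as follows. Let $\Delta_1$ and $\Delta_2$ be two discriminants of $K$ lying in the same class of $\units K/{\units K}^2$, say $\Delta_2=a^2\Delta_1$ with $a\in\units K$; I must show $\uleg{\Delta_1}{*}=\uleg{\Delta_2}{*}$. First I would observe that the two primitive characters share their defining invariants. By Corollary~\ref{prop:fD-class-invariance} one has $\id f_{\Delta_2}=a\,\id f_{\Delta_1}$, so their conductors agree: $\id D:=\Delta_1/\id f_{\Delta_1}^2=\Delta_2/\id f_{\Delta_2}^2$, and by Theorem~\ref{thm:groessencharakter-property} both $\uleg{\Delta_1}{*}$ and $\uleg{\Delta_2}{*}$ are primitive Gr\"o\ss{}encharaktere of this conductor $\id D$. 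Moreover $\sigma(\Delta_2)=\sigma(a)^2\sigma(\Delta_1)$ for every real embedding $\sigma$ of $K$, so the set of real embeddings with $\sigma(\Delta_i)<0$ does not depend on $i$; hence, again by Theorem~\ref{thm:groessencharakter-property}, the two characters have the same infinity type.

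Next I would compare the symbols prime by prime. Fix a prime ideal $\id p\nmid\Delta_1\Delta_2$. Since each $\Delta_i$ is a discriminant, in particular a square mod~$4$, and a $\id p$-unit, one has $\leg{\Delta_i}{\id p}=+1$ if and only if $\Delta_i$ is a square in $K_{\id p}$: for $\id p\nmid 2$ this is the Chinese remainder theorem combined with Hensel's lemma, and for $\id p\mid 2$ it follows from the fact that a $\id p$-adic unit which is a square modulo $\id p^{2e+1}$, with $e=v_{\id p}(2)$, is a square in $K_{\id p}$. Now $\Delta_1$ and $\Delta_2$ differ by the square $a^2$ in $\units{K_{\id p}}$, so $\Delta_1$ is a square in $K_{\id p}$ if and only if $\Delta_2$ is; hence $\leg{\Delta_1}{\id p}=\leg{\Delta_2}{\id p}$. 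By multiplicativity this gives $\leg{\Delta_1}{\id a}=\leg{\Delta_2}{\id a}$, hence $\uleg{\Delta_1}{\id a}=\uleg{\Delta_2}{\id a}$, for every fractional ideal $\id a$ prime to $\Delta_1\Delta_2$.

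Finally I would deduce the equality of the characters. The quotient $\psi:=\uleg{\Delta_1}{*}\cdot\uleg{\Delta_2}{*}^{-1}$ is a Gr\"o\ss{}encharakter of modulus $\id D$ with trivial infinity type, hence factors through the finite ray class group of $K$ modulo~$\id D$; by the previous step it is trivial on every ideal prime to $\Delta_1\Delta_2$. Since $\id D$ divides $\Delta_1$, every class of this ray class group is represented by an ideal prime to $\Delta_1\Delta_2$ (such a representative is obtained from any representative in the ray class group modulo $\Delta_1\Delta_2$). Therefore $\psi$ is trivial, i.e.\ $\uleg{\Delta_1}{*}=\uleg{\Delta_2}{*}$, which is the claim. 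I do not expect a genuine obstacle here; the only steps requiring a little care are the local reduction at the dyadic primes above and the remark that ideals prime to $\Delta_1\Delta_2$ still exhaust the ray class group modulo~$\id D$. The essential content of the proposition is thus already contained in Corollary~\ref{prop:fD-class-invariance} together with the obvious invariance of the symbols $\leg{\Delta}{*}$ under multiplying $\Delta$ by a square.
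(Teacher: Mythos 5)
Your proof is correct and rests on the same two facts the paper uses: the symbol $\leg{\Delta}{\id p}$ is unchanged when $\Delta$ is multiplied by a square prime to $\id p$, and a primitive Gr\"o\ss encharakter is determined by its restriction to ideals prime to any auxiliary modulus. The paper packages this more briefly by introducing the common discriminant $\Delta''=\Delta a'^2=\Delta'a^2$ and observing that $\leg{\Delta''}{*}$ is simultaneously the restriction of $\leg{\Delta}{*}$ and of $\leg{\Delta'}{*}$, which lets it skip your explicit local analysis at the dyadic primes and the ray-class argument.
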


\begin{proof}
  Let $\Delta'$ be another discriminant in the same class as
  $\Delta$. Then $\Delta a^2=\Delta' a'^2$ with suitable nonzero
  integers $a$ and $a'$. Let $\Delta''= \Delta a'^2=\Delta' a^2$. It
  suffices to show that $\uleg {\Delta''}* = \uleg \Delta *$ and
  $\uleg {\Delta''}* = \uleg {\Delta'} *$. But this is clear since
  $\leg {\Delta''}*$ is the restriction of~$\leg \Delta *$ and of
  $\leg {\Delta'} *$ to the group of fractional ideals relatively prime
  to $\Delta''$.
\end{proof}

\subsection{The decomposition of primes in~relative quadratic extensions}

Let
\begin{equation*}
  L\left(\uleg \Delta*, s\right)
  =
  \sum_{\id a}
  \uleg \Delta{\id a}\,\N(\id a)^{-s}
  ,
\end{equation*}
where the sum runs over all integral ideals of $K$ with the convention
that $\uleg \Delta{\id a}=0$ if $\id a$ is not relatively prime to
conductor of $\uleg \Delta*$.
\begin{Theorem}
  \label{thm:decomposition-law}
  If $\Delta$ is not a square in $K$, then
  \begin{equation*}
    \zeta_{K(\sqrt \Delta)}(s)=\zeta_K(s)L\left(\uleg \Delta*, s\right)
  \end{equation*}
\end{Theorem}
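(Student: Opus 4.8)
The plan is to argue by comparing Euler products. Set $L:=K(\sqrt\Delta)$ and $\id D:=\Delta/\id f_\Delta^2$, which by Theorem~\ref{thm:groessencharakter-property} is the conductor of $\uleg\Delta*$. For $\Re(s)>1$ all three series converge absolutely and have Euler products
\begin{equation*}
  \zeta_K(s)=\prod_{\id p}\bigl(1-\N(\id p)^{-s}\bigr)^{-1},\quad
  \zeta_L(s)=\prod_{\id p}\prod_{\id P\mid\id p}\bigl(1-\N(\id P)^{-s}\bigr)^{-1},\quad
  L\bigl(\uleg\Delta*,s\bigr)=\prod_{\id p\nmid\id D}\bigl(1-\uleg\Delta{\id p}\N(\id p)^{-s}\bigr)^{-1},
\end{equation*}
so it suffices to check, prime by prime, that the $\id p$-factor $\prod_{\id P\mid\id p}(1-\N(\id P)^{-s})^{-1}$ of $\zeta_L$ equals $(1-\N(\id p)^{-s})^{-1}(1-\uleg\Delta{\id p}\N(\id p)^{-s})^{-1}$. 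Since $L/K$ is quadratic (here I use that $\Delta$ is not a square), the left-hand side is $(1-\N(\id p)^{-s})^{-2}$, $(1-\N(\id p)^{-2s})^{-1}$, or $(1-\N(\id p)^{-s})^{-1}$ according as $\id p$ splits, is inert, or ramifies in $L$. Thus the theorem reduces to the three equivalences: $\id p$ ramifies $\iff\uleg\Delta{\id p}=0$; $\id p$ splits $\iff\uleg\Delta{\id p}=+1$; $\id p$ inert $\iff\uleg\Delta{\id p}=-1$.

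The ramified case I would dispose of first: $\id p$ ramifies in $L/K$ exactly when $\id p\mid D_{L/K}$, and by Theorem~\ref{thm:discriminant-formula}, formula~\eqref{eq:proper-discriminant-formula}, $D_{L/K}=\Delta/\id f_\Delta^2=\id D$ is precisely the conductor of $\uleg\Delta*$; hence $\uleg\Delta{\id p}=0\iff\id p\mid\id D\iff\id p$ ramifies, and in this case both local factors equal $(1-\N(\id p)^{-s})^{-1}$. For the remaining primes, which are unramified in $L$, I would use that $\id p$ splits in $L=K(\sqrt\Delta)$ if and only if $\Delta$ is a square in the completion $K_{\id p}$ (since $L\otimes_K K_{\id p}=K_{\id p}[X]/(X^2-\Delta)$), and that otherwise $\id p$, being unramified, is inert. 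So everything comes down to identifying, for $\id p\nmid\id D$, the condition ``$\Delta\in(\units{K_{\id p}})^2$'' with ``$\uleg\Delta{\id p}=+1$''.

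Next I would treat the primes $\id p\nmid\Delta$, where by construction $\uleg\Delta{\id p}=\leg\Delta{\id p}$: here $\Delta$ is a $\id p$-adic unit, and Hensel's lemma in $K_{\id p}$ — with the threshold $4\id p$ at the dyadic places — shows that $\Delta$ is a square in $K_{\id p}$ if and only if $\Delta$ is a square modulo $4\id p$, i.e.\ if and only if $\leg\Delta{\id p}=+1$; otherwise $\leg\Delta{\id p}=-1$ and $\id p$ is inert (for this one must also know that a discriminant which is a $\id p$-unit generates at worst the unramified quadratic extension of $K_{\id p}$, never a ramified one, because it is a square modulo $4$). Finally, for the finitely many $\id p$ that are unramified in $L$ but divide $\Delta$ — equivalently $\id p\mid\id f_\Delta$ — I would invoke the proposition that the primitive Gr\"o\ss encharakter $\uleg\Delta*$ depends only on the square class of $\Delta$: since $\id p$ is unramified, $v_{\id p}(\Delta)$ is even and the unit part of $\Delta$ at $\id p$ is a square modulo $4\D_{\id p}$, so by weak approximation there is a discriminant $\Delta'$ in the class of $\Delta$ with $\id p\nmid\Delta'$; applying the previous paragraph to $\Delta'$ and using $K(\sqrt{\Delta'})=L$ and $\uleg{\Delta'}*=\uleg\Delta*$ gives $\uleg\Delta{\id p}=\leg{\Delta'}{\id p}=+1$ iff $\id p$ splits, $=-1$ iff $\id p$ is inert. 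This completes the three equivalences and hence the factorization.

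The main obstacle I expect is the local analysis at the dyadic primes: one needs the sharp form of Hensel's lemma there (a unit is a square in $K_{\id p}$ precisely when it is a square modulo $4\id p$) together with the assertion that a discriminant that is a $\id p$-unit cannot cut out a ramified extension of $K_{\id p}$ — i.e.\ control of the square classes and the higher unit groups of dyadic fields, exactly the circle of facts developed in Appendix~\ref{sec:Hilbert-symbol}. The existence of the auxiliary $\Delta'$ for $\id p\mid\id f_\Delta$ is a routine approximation argument once one has recorded that $v_{\id p}(\Delta)$ is even and the unit part of $\Delta$ is a square modulo $4\D_{\id p}$ when $\id p$ is unramified.
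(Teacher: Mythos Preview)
Your proposal is correct and follows essentially the same strategy as the paper: compare Euler factors, reduce to the character criterion $\uleg\Delta{\id p}\in\{+1,-1,0\}\leftrightarrow$ split/inert/ramified, and handle primes $\id p\mid\id f_\Delta$ by passing to another discriminant $\Delta'$ in the same square class with $\id p\nmid\Delta'$. The only cosmetic difference is that the paper cites Hecke's classical splitting criteria (Satz~117--119) and derives the ramification at $\id p\mid\id D$ directly from them, whereas you invoke the standard fact ``$\id p$ ramifies $\iff\id p\mid D_{L/K}$'' together with Theorem~\ref{thm:discriminant-formula} and argue the unramified cases via completions and the Local Square Theorem; the mathematical content is the same.
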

\begin{proof}
  It is a basic fact~\cite[Satz 117]{Hecke} that every prime ideal
  $\id p$ of $K$ is inert (i.e.~remains a prime in $L$), or splits
  (i.e.~factors into a product of two different primes), or ramifies
  (i.e.~is the square of a prime ideal in $L$). Moreover, which
  property holds true is given by the {\em character criterion},
  namely, the first, second or third property holds true accordingly
  as $\uleg \Delta{\id p}$ equals $-1$, $+1$ or~$0$. For
  $\id p\nmid\Delta$ the character criterion is~\cite[Satz 118,
  119]{Hecke} (for applying Satz 119 loc.cit.~recall that
  $\leg \Delta{\id p}$ equals $+1$ or $-1$ according as $\Delta$ is a
  square mod $4\id p$, and that $\Delta$ is a square mod~$4$). But
  then the criterion is also true for any
  $\id p\nmid D_{L/K} = \Delta/\id f_\Delta^2$: If~$\id p \mid \Delta$,
  but~$\id p\nmid \Delta/\id f_\Delta^2$, we can find a discriminant
  $\Delta'$ in $\Delta {\units K}^2$ with $\id p\nmid \Delta'$ and
  apply Satz~118,~119 loc.cit.~to $\Delta'$. Indeed, choose an
  integral ideal $\id b$ in the class of~$\id f_\Delta$ relatively
  prime to $2\id f_{\Delta}$, let $a=\id b\id f_{\Delta}$, and set
  $\Delta'=\Delta a^2$.

  If $\id p\mid \Delta/\id f_{\Delta}^2$, i.e.~if
  $\uleg \Delta{\id p}=0$, then $\id p$ is ramified according to Satz
  118 loc.cit.~if the exact $\id p$-power dividing
  $\Delta/\id f_{\Delta}^2$ is $\id p^f$ with odd $f$ (which is always
  the case for $\id p \nmid 2$). If $f$ is even and $\id p \mid 2$ the
  ideal $\id p$ ramifies in~$L$ by Satz 119 loc.cit.~(for applying
  Satz 119 we write $L=K(\sqrt d)$ with some integer $d$
  in~$\Delta{\units K}^2$ which is not divisible by~$\id p$ and
  observe that $d$ cannot be a square modulo~$4$ since otherwise
  $\Delta/\id f_\Delta^2 = d/\id f_d^2$, contradicting $\id p\nmid d$;
  one can choose $d=\Delta a^2$ with $a=\id b/\id f_\Delta\id p^{f/2}$
  and $\id b$ relatively prime to~$\id p$).

  The claimed identity is now an easy consequence of the character
  criterion by comparing, for each prime ideal ${\id p}$ of~$K$, the
  $\id p$th Euler factors on both sides of the claimed identity.
\end{proof}

If the conductor of $\uleg \Delta*$ is $1$ no prime ideal ramifies
and vice versa. In other words, we have
\begin{Corollary}
  The extension $K(\sqrt \Delta)$ is unramified if and only if
  $\Delta=\id f_\Delta^2$.
\end{Corollary}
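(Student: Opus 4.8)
The plan is to read off the corollary from the results already assembled, using the standard fact that the finite primes of $K$ which ramify in a relative extension $L/K$ are exactly the prime divisors of the relative discriminant $D_{L/K}$. If $\Delta$ is not a square in $K$, then by the character criterion recorded in the proof of Theorem~\ref{thm:decomposition-law} a prime $\id p$ ramifies in $L=K(\sqrt\Delta)$ precisely when $\uleg\Delta{\id p}=0$, i.e.\ precisely when $\id p$ divides the conductor of $\uleg\Delta*$; and by Theorem~\ref{thm:groessencharakter-property} (equivalently by the discriminant formula~\eqref{eq:proper-discriminant-formula} of Theorem~\ref{thm:discriminant-formula}) that conductor equals $\Delta/\id f_\Delta^2 = D_{L/K}$.

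Consequently, no prime of $K$ ramifies in $L$ if and only if the ideal $\Delta/\id f_\Delta^2$ is the unit ideal $\D$, which, reading $\Delta$ on the left-hand side as the principal ideal it generates, is exactly the assertion $\Delta = \id f_\Delta^2$. Conversely, if $\Delta=\id f_\Delta^2$ then $D_{L/K}=\D$ by~\eqref{eq:proper-discriminant-formula}, so $L/K$ is unramified at every finite place. For the degenerate case that $\Delta$ is a square in $K$, say $\Delta=c^2$ with $c$ an integer of $K$, the extension $K(\sqrt\Delta)=K$ is trivially unramified; on the other hand $\id f_\Delta$ is then the largest integral ideal with $\id f_\Delta^2\mid c^2\D$, namely $\id f_\Delta=c\D$, so that $\Delta=\id f_\Delta^2$ holds here as well, and the stated equivalence is valid in this edge case too.

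There is no real obstacle in this argument: all the substantive work is contained in Theorems~\ref{thm:discriminant-formula}, \ref{thm:groessencharakter-property} and~\ref{thm:decomposition-law}. The only points requiring care are the bookkeeping convention that identifies the number $\Delta$ with the principal ideal $\Delta\D$ and the separate verification in the square case; the latter could alternatively be folded in by noting that $\id f_\Delta^{-1}$ represents the Steinitz class of the ring of integers of $K(\sqrt\Delta)$ (Proposition~\ref{prop:Steinitz-class}), which is trivially principal when that ring is $\D$ itself.
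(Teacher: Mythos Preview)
Your argument is correct and follows essentially the same route as the paper: the paper's one-line justification (``If the conductor of $\uleg\Delta*$ is $1$ no prime ideal ramifies and vice versa'') is exactly your appeal to the character criterion in the proof of Theorem~\ref{thm:decomposition-law} together with the conductor formula of Theorem~\ref{thm:groessencharakter-property}. Your explicit treatment of the degenerate square case is a welcome addition not spelled out in the paper.
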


\subsection{Proof of Theorem~\ref{thm:groessencharakter-property}}
\label{sec:proof-groessencharakter-property}

It remains to prove Theorem~\ref{thm:groessencharakter-property}.  The
educated reader might have noticed that $\uleg \Delta*$ is
essentially nothing else than the Artin reciprocity map associated to
$K(\sqrt \Delta)$.

Indeed, let $\Delta$ be a discriminant not a square, let
$L=K(\sqrt \Delta)$, let~$\mathfrak{\id O}$ be the ring of integers
of~$L$ and let $\sigma$ be the nontrivial Galois substitution
of~$L/K$, which maps $\sqrt \Delta$ to $-\sqrt\Delta$. As we saw in
the proof of Theorem~\ref{thm:decomposition-law}, any prime ideal
${\id p}$ of $K$ relatively prime to $D_{K(\sqrt \Delta)/K}$ with
$\leg \Delta{\id p}=1$ factors in $L$ in the form
${\id p}\mathfrak{\id O}=\id P\sigma(\id P)$ with
$\id P\not=\sigma(\id P)$. The {\em Frobenius $F_{\id p}$} (i.e.~the
generator of the subgroup of $\sym{Gal}(L/K)$ mapping each prime ideal
of $L$ over $\id p$ to itself) is therefore the identity. On the other
hand, if $\leg \Delta{\id p}=-1$, then $\id p\mathfrak {O}$ is the
only prime ideal in $L$ over $\id p$, and hence $F_{\id p}=\sigma$.
Therefore, if $f$ denotes the isomorphism of $\{\pm 1\}$ with
$\sym{Gal}(L/K)$, then $f\circ\leg\Delta*$ equals the Artin
reciprocity map $\leg {L/K}{\id p}$ on the group of fractional ideals
relatively prime to~$\Delta$, which maps a prime ideal ${\id p}$ to
$F_{\id p}$.  The fact that $\leg \Delta*$ is a Gr\"o{\ss}encharakter
follows then from well-known facts of Global Classfield theory, which
include also that the discriminant $D_{L/K}$ is the conductor of
$\leg \Delta*$~\cite[Ch. VI \S 4.4]{Cassels-Froehlich}.

However, we prefer to keep with the explicit character of this note
and to give a more direct and elementary proof. We shall need the
product formula for the quadratic Hilbert symbols though, which
however can be proved without developing full Classfield Theory (see
e.g.~\cite[Ch.~7]{OMeara}).

\begin{proof}[Proof of Theorem~\ref{thm:groessencharakter-property}]
  For a nonzero $a$ in $\units K$ relatively prime to~$\Delta$, set
  \begin{equation*}
    \psi(a)
    := \leg {\Delta} {a\D} \prod_{\sigma\in M}\sym{sign} \sigma(a)
    .
  \end{equation*}
  Note that $\psi$ defines a linear character of the multiplicative
  group $\units K_\Delta$ of elements in~$\units K$ relatively prime
  to~$\Delta$.  That $\leg\Delta*$ is a Gr\"o{\ss}encharakter
  mod~$\Delta$ means that $\psi$ factors through a homomorphism of
  $\units{\left(\D/\Delta\D\right)}$, i.e.~that $\psi$ is trivial on
  the kernel of the natural map
  $\units K_\Delta\rightarrow \units {\left(\D/\Delta\D\right)}$
  (which sends $a$ to $a'+\Delta\D$, where $a'$ is any element in $\D$
  such that $a'\equiv a \bmod \Delta K_\Delta$, and where $K_\Delta$
  is the ring of elements in $K$ relatively prime to $\Delta$).  For
  this it suffices to show that, for integral $a$, the value $\psi(a)$
  depends only on the residue class of $a$ modulo~$\Delta$ (as one
  sees on writing any $a$ in the kernel of the natural map in the form
  $a=\gamma/\delta$ with integers $\gamma\equiv \delta\bmod \Delta$
  and $(\delta,\Delta)=1$.)
  
  So, let $a$ be integral and relatively prime to~$\Delta$.  For the
  proof that $\psi(a)$ depends only on $a$ modulo~$\Delta$, we use the
  product formula (see e.g.~\cite[Ch.~VI, Thm.~(8.1)]{Neukirch}
  or~\cite[71:18 Thm.]{OMeara})
  \begin{equation}
    \label{eq:product-formula}
    \prod_v \hilb v \Delta a = 1 ,
  \end{equation}
  where $v$ runs through all places of $K$ (including the infinite
  ones) and $\hilb v {\_} {\_}$ denote the quadratic Hilbert symbol of
  the completion~$K_v$ of~$K$. Thus, for $a,b$ in $K_v$, we have
  $\hilb v ab=+1$ if $ax^2+by^2=1$ has a solution $x$, $y$ in $K_v$
  and $\hilb v ab=-1$, otherwise.

  If $v$ is infinite, corresponding to the embedding $\sigma$ of~$K$
  into~$\C$, then obviously $\hilb v \Delta a = -1$ if and only if
  $\sigma$ is real and $\sigma(\Delta)$ and $\sigma(a)$ are both
  negative. In other words, the contribution to the left hand side
  of~\eqref{eq:product-formula} of the infinite places
  equals~$\prod_{\sigma\in M}\sym{sign} \sigma(a)$. If $v$ is a finite
  place corresponding to a prime ideal~$\id p$ not dividing~$2$, then
  $\hilb v \Delta a = 1$, or $\leg \Delta {\id p}^{v(a)}$, or
  $\leg a {\id p}^{v(\Delta)}$ according as $\id p \nmid \Delta$,
  or~$\id p \mid a$, or~$\id p\mid \Delta$ (as follows
  e.g.~from~\cite[Ch.~V, Prop.~(3.4)]{Neukirch} or~\cite[p.~165,
  63:11a]{OMeara}).  Therefore the product
  formula~\eqref{eq:product-formula} becomes
  \begin{equation*}
    \leg \Delta {\id a}\leg {a}{\id D}
    \prod_{\sigma\in M}\sym{sign} \sigma(a)
    \prod_{v\mid 2} \hilb v \Delta {a}
    = 1,
  \end{equation*}
  where $\id D$ is the odd part of~$\Delta$ (i.e.~the product of all
  prime ideal powers dividing~$\Delta$ and relatively prime to~$2$),
  and $\id a$ is the odd part of~$a$. In other words
  \begin{equation*}
    \psi(a) = \leg \Delta {a/\id a} \leg a {\id D}
    \prod_{v\mid 2} \hilb v \Delta {a}
    =
    \leg a {\id D}
    \prod_{v\mid 2} \leg \Delta {\id p_v^{v(a)}} \hilb v \Delta {a}
    ,
  \end{equation*}
  where $\id p_v$ is the prime ideal corresponding to~$v$. If
  $v(\Delta)=0$ the $v$th factor on the right equals~$1$
  (see~\cite[Ch.XV,\S3,Prop.~6]{Serre}).

  Finally, let $v\mid 2$ and $l:=v(\Delta)\ge 1$ (and hence
  $v(a)=0$). We need to prove that the quadratic character of
  $U:=\units \D_v$ defined by
  \begin{equation*}
    \kappa:a\mapsto \hilb v \Delta {a}
  \end{equation*}
  factors through a Dirichlet character modulo~$\id p_v^{l}$. Since
  the natural map
  $\units \D_v\rightarrow \units {\left(\D/\id p_v^l\right)}$ is
  surjective and has kernel $U_l:=1+\id p_v^l\D_v$ the character
  $\kappa$ factors through a Dirichlet character mod~$\id p_v^l$ if
  and only if $\kappa$ is trivial on $U_l$.

  Since $U_{2e+1}\subseteq U^2$, where $e=v(2)$, i.e.~by the Local
  Square Theorem (see Section~\ref{sec:Hilbert-symbol}), $\kappa$ is
  in any case a Dirichlet character mod~$\id p_v^{2e+1}$.  Hence we
  can assume that $l\le 2e$. But then $l$ is even (since $\Delta$ is a
  square mod~$4$). Let $\pi_v$ a uniformizer of $K_v$ and write
  $\Delta=\pi_v^lb$. By assumption $b$ is a square
  modulo~$\id p_v^{2e-l}$, i.e.~$b/c^2 \equiv 1\bmod \id p_v^{2e-l}$.
  In other words, $\Delta = \pi_v^lc^2d$, where $d$ is in~$U_{2e-l}$,
  with the convention $U_0=U$ for the case $l=2e$. It follows that
  $\kappa(a) = \hilb v d a$.  It remains therefore to show that, for
  any even integers $i,j\ge 0$ with $i+j=2e$, one has
  $\hilb v {U_i}{U_j} =1$. Though this seems to be known (see the very
  last exercise~\cite[Ch.XV, \S3, Ex.~3]{Serre}, from which it
  follows) we did not find any reference for this providing a proof
  not depending on heavy machinery (see~\cite{Dalawat-reciprocity},
  where a proof of a more general result is given based on Local Class
  Field Theory). For the convenience of the reader we give a
  self-contained and easy proof in the
  Appendix Section~\ref{sec:Hilbert-symbol},
  Proposition~\ref{prop:Hilbert-symbol-on-unit-groups}.
   
  The conductor of $\leg \Delta *$ equals the conductor of the
  Dirichlet character~$\psi$ (see e.g.~\cite[Ch.~7, (6.2)
  Prop.]{Neukirch}). But this is the product all local conductors
  $\id p_v^{s_v}$ of the Dirichlet characters
  $a+\id p_v\mapsto \hilb v \Delta a$, taken over all finite $v$ with
  $v(\Delta)\ge 1$. If $\id p_v\nmid 2$ then obviously $s_v=0$ or
  $s_v=1$ according as $v(\Delta)$ is even or odd, and therefore
  $s_v=v(\Delta/\id f_\Delta^2)$. If on the contrary $\id p_v \mid 2$
  then $\hilb v\Delta a = \hilb v {\Delta/\pi_v^{2k}}a$ for any
  integer~$k$, and as we saw $a\mapsto \hilb v {\Delta/\pi_v^{2k}}a$
  factors through a Dirichlet character modulo $\Delta/\pi_v^{2k}$ if
  $\Delta/\pi_v^{2k}$ is integral and a square modulo~$4$. Therefore,
  $s_v$ is $\le $ the largest power $\id p_v^{2k}$ dividing $\Delta$
  and such that $\Delta':=\Delta/\id p_v^{2k}$ is a square modulo~$4$,
  i.e.~$s_v\le v(\Delta')$.  But $s_v$ is even equal to
  $v(\Delta')$. For this let $l=v(\Delta')$. Clearly, $l\le 2e+1$
  (with $e=v(2)$). If $l=2e+1$,
  Theorem~\ref{thm:dyadic-hilbert-symbol} in
  Section~\ref{sec:Hilbert-symbol}below implies
  $\hilb v\Delta a=\hilb v \pi a$ for all units $a$, and another
  application of this theorem implies that there is a unit $a$ in
  $U_{4e}$ such that $\hilb v\pi a=-1$; we conclude $s_v=2e+1$.  If
  $l\le 2e$ (so that $l$ is even), we proceed as in the last paragraph
  and write as before $\Delta'=\pi_v^lc^2d$ with units $c$, $d$ and
  $d$ in $U_{2e-l}$.  Again $\hilb v {\Delta'} a = \hilb v d a$.
  Therefore $\hilb v d{U_{s_v}}=1$, and then
  Theorem~\ref{thm:dyadic-hilbert-symbol} implies that $d$ is in
  $U_{2e-s_v}$. But by the choice of $k$ the unit $d$ is not in any
  $U_{m}$ for any $m\ge 2e-l+2$, and therefore $2e-s_v \le 2e-l+1$,
  i.e.~$s_v\ge l-1$. Since $U_{l-1}U_{(l-1)/2}^2=U_{l-2}$ (see
  Section~\ref{sec:Hilbert-symbol}, Lemma~\ref{lem:units-mod-squares}
  below) we conclude $s_v\le l$, which was to be shown.  This proves
  the theorem.
\end{proof}

\section{Zeta functions associated to discriminants}
\label{sec:zeta-functions}

\subsection{A reciprocity law}

As in the preceding section $K$ denotes an arbitrary number field.
For any discriminant $\Delta$ of~$K$, we set\footnote{Note that
  $\zeta(\Delta,s)$ depends also on $K$ since $\Delta$ might be a
  discriminant in various fields. However, since we fixed $K$ once and
  for all we suppress this dependence in the notation.}
\begin{equation}
  \label{eq:N-zeta-definition}
  \zeta(\Delta,s) := \sum_{\id a} \frac {N_{\Delta}(\id a)}{\N(\id
    a)^{s}}
\end{equation}
where
\begin{equation*}
  N_{\Delta}(\id a)=\card {\left\{b\bmod
      2\id a: \Delta\equiv b^2\bmod 4\id a\right\}}
  ,
\end{equation*}
and where the sum is over all integral ideals $\id a$ of~$K$.
  
The series $\zeta(\Delta,s)$ converges for $\Re(s)>1$. By the Chinese
remainder theorem $N_{\Delta}(\id a)$ is multiplicative in~$\id a)$
and possesses hence an Euler product. For a prime ideal power
$\id p^k$ which is relatively prime to~$\Delta$ we have
$N_{\Delta}(\id p^k)=1+\leg \Delta{\id p}$. Accordingly,
$\zeta(\Delta,s)$ equals up to a finite Euler product the product of
$\zeta_K(s)/\zeta_K(2k)$ with
$L\left(\leg \Delta*,s\right)=\sum_{\id a} \leg \Delta{\id a}\N(\id
a)^{-s}$,
the sum being over all integral ideals of $K$ relatively prime to
$\Delta$.

For explaining the precise connection between $\Delta$ and
$\zeta_K(s)L\left(\leg \Delta*,s\right)/\zeta_K(2k)$, we define a
function $\chi_\Delta$ on the semigroup of all integral ideals $\id a$
by setting
\begin{equation}
  \label{eq:chi-Delta}
  \chi_{\Delta}(\id a)
  :=
  \begin{cases}
    \N(\id g) \, \uleg \Delta {\id a/\id g^2}
    & \text{if $(\id a,\Delta)=\id g^2$ and $\Delta$ is a square mod~$4\id g^2$,}\\
    0 & \text{otherwise} .
  \end{cases}
\end{equation}
Note that, for an integral square $\id g^2\mid\Delta$, the condition
that $\Delta$ is a square mod~$4\id g^2$ is equivalent to
$\id g\mid \id f_\Delta$.  Of course, $\chi_\Delta$ is no longer a
homomorphism, but it remains multiplicative in the sense that
$\chi_\Delta(\id a\id b) = \chi_\Delta(\id a)\chi_\Delta(\id b)$
whenever $\id a$ and $\id b$ are relatively prime. We use $\N(\id g)$
for the (absolute) norm of~$\id g$
(i,e.~$\N(\id g)=\card {\D /\id g}$).

If we associate to a function $\psi$ defined on integral ideals of~$K$
the Dirichlet series
$L(\psi,s):=\sum_{\id a} \psi(\id a)\N(\id a)^{-s}$ (the sum being
over all integral ideals of~$K$), then it is a bit tedious but
straight-forward to rewrite the definition of~$\chi_\Delta$ in the
form
\begin{equation}
  \label{eq:chi-Delta-as-L-series}
  L(\chi_\Delta,s)
  =
  L\left(\uleg \Delta*,s\right)
  \sum_{\id t \mid \id f_\Delta}
  \frac {\mu(\id t)\uleg \Delta {\id t}}{\N(\id t)^s}\,\sigma_{1-2s}(\id f_\Delta/\id t)
  ,
\end{equation}
where we set $\uleg \Delta {\id t}=0$ if $\id t$ is not relatively
prime to $\Delta/\id f_\Delta^2$, where $\mu(\id a)$ is the M\"obius
$\mu$-function of~$K$, and where
$\sigma_s(\id f)=\sum_{\id t\id f}\N(\id t)^s$. We leave this identity
as an exercise; the interested reader may as well look up
its simple proof in~\cite[Lemma~6.1]{Boylan-Jacobi-Eisenstein}.

\begin{Theorem}
  \label{thm:counting-formula}
  For any discriminant $\Delta$ in $K$ and integral ideal~$\id
  a$ of $K$, one has
  \begin{equation}
    \label{eq:key-identity}
    \card {\left\{x\in\D/2\id a : x^2\equiv \Delta\bmod 4\id a\right\}}
    =
    \sum_{
      \begin{subarray}c
        \id b\mid\id a\\
        \id a/\id b \text{ squarefree}
      \end{subarray}
    }
    \chi_\Delta(\id b)
    .
  \end{equation}
  (The sum is over all integral ideals diving $\id
  a$ and such that $\id a/\id b$ is squarefree.) In other words,
  \begin{equation}
    \label{eq:the-identity}
    \zeta(\Delta,s)
    =
    \frac {\zeta_K(s)}{\zeta_K(2s)}L(\chi_\Delta,s)
    ,
  \end{equation}
  where $\zeta(\Delta,s)$ is the zeta function~\eqref{eq:N-zeta-definition}.
\end{Theorem}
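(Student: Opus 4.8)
The plan is to prove the elementary identity \eqref{eq:key-identity}; the equivalent analytic statement \eqref{eq:the-identity} follows from it by comparing Euler products, since $\zeta_K(s)/\zeta_K(2s)=\prod_{\id p}\bigl(1+\N(\id p)^{-s}\bigr)$, so the coefficient of $\N(\id p)^{-ks}$ in the $\id p$-factor of $\zeta_K(s)L(\chi_\Delta,s)/\zeta_K(2s)$ is $\chi_\Delta(\id p^k)+\chi_\Delta(\id p^{k-1})$, which is exactly the right-hand side of \eqref{eq:key-identity} for $\id a=\id p^k$. To prove \eqref{eq:key-identity}, observe that both sides are multiplicative in $\id a$: the left side by the Chinese remainder theorem (as already noted for $\zeta(\Delta,s)$), and the right side because $\chi_\Delta$ is multiplicative and the sum of $\chi_\Delta(\id b)$ over $\id b\mid\id a$ with $\id a/\id b$ squarefree is a multiplicative transform. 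Hence it suffices to treat $\id a=\id p^k$. For $k=0$ both sides equal $1$, and for $k\ge1$ the only $\id b\mid\id p^k$ with squarefree quotient are $\id p^k$ and $\id p^{k-1}$, so the identity to prove is
\begin{equation*}
  N_\Delta(\id p^k)=\chi_\Delta(\id p^k)+\chi_\Delta(\id p^{k-1}).
\end{equation*}

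From here the argument is a local computation at $\id p$, split according to how $\id p$ meets $\Delta$ and $2$. Write $v=v_{\id p}$, $l=v(\Delta)$, $f=v(\id f_\Delta)$ and $e=v(2)$; the values of $\chi_\Delta$ on $\id p$-powers come straight from \eqref{eq:chi-Delta}, while $N_\Delta(\id p^k)=\card{\{b\in\D_{\id p}/\id p^{\,e+k}:b^2\equiv\Delta\bmod \id p^{\,2e+k}\}}$ is obtained by counting square roots in the local ring. If $\id p\nmid\Delta$, then $\chi_\Delta(\id p^j)=\leg\Delta{\id p}^{\,j}$ and the already recorded value $N_\Delta(\id p^k)=1+\leg\Delta{\id p}$ matches $\leg\Delta{\id p}^{\,k}+\leg\Delta{\id p}^{\,k-1}$ (each equal to $2$ or $0$). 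If $\id p\mid\Delta$ but $\id p$ is odd (so $e=0$ and $f=\lfloor l/2\rfloor$), a short counting argument, using Hensel's lemma once $k>l$, gives $N_\Delta(\id p^k)=\N(\id p)^{\lfloor k/2\rfloor}$ for $k\le l$, and for $k>l$ the value $\N(\id p)^{f}\bigl(1+\uleg\Delta{\id p}\bigr)$ when $l$ is even and $0$ when $l$ is odd; in each range this is checked directly against $\chi_\Delta(\id p^k)+\chi_\Delta(\id p^{k-1})$ as evaluated from \eqref{eq:chi-Delta}.

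The substantive case is $\id p\mid 2$ together with $\id p\mid\Delta$, where the factor $\id p^{2e}$ hidden in $4\id p^k$ makes the plain valuation bookkeeping inconclusive: square roots of $\Delta$ modulo $4\id p^k$ can survive longer than $v(\Delta)$ alone would suggest. I would treat it by the same explicit dyadic analysis as in \S\ref{sec:proof-groessencharakter-property}: since $\Delta$ is a square modulo $4$, the exponent $l$ is even whenever $l\le 2e$, and then one can write $\Delta$ locally as $\pi^{l}c^{2}d$ with a uniformizer $\pi$, a unit $c$ and $d$ in the higher unit group $U_{2e-l}=1+\id p^{\,2e-l}\D_{\id p}$ (with the convention $U_0=\units{\D_{\id p}}$). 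Using the Local Square Theorem $U_{2e+1}\subseteq\units{\D_{\id p}}^{2}$ and the squaring relations for the $U_i$ (Section~\ref{sec:Hilbert-symbol}), one can then count the square roots of $\pi^{l}c^{2}d$ modulo $\id p^{\,2e+k}$ explicitly, the answer being expressible through $\uleg\Delta{\id p}$ and $f=v(\id f_\Delta)$, the latter pinned down by Theorem~\ref{thm:discriminant-formula}. Matching this count term by term with $\chi_\Delta(\id p^k)+\chi_\Delta(\id p^{k-1})$ from \eqref{eq:chi-Delta} finishes the proof. The only real obstacle is this dyadic bookkeeping --- keeping simultaneous track of $e$, $l$, $f$ and the defining congruence $\Delta\equiv x^2\bmod 4\id f_\Delta^{2}$; everything else is routine.
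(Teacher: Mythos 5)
Your reduction is exactly the paper's: both sides of \eqref{eq:key-identity} are multiplicative in $\id a$ (Chinese remainder theorem on the left, multiplicativity of $\chi_\Delta$ on the right), so everything comes down to $N_\Delta(\id p^k)=\chi_\Delta(\id p^k)+\chi_\Delta(\id p^{k-1})$, and the unramified and odd ramified cases you dispose of correctly. But the case you yourself single out as ``the substantive case'' --- $\id p\mid 2$ and $\id p\mid\Delta$ --- is where the theorem actually lives, and there your proposal stops at a description of the tools rather than a proof. Writing $\Delta=\pi^{l}c^{2}d$ with $d\in U_{2e-l}$ only records that $\Delta':=\Delta/\pi^{l}$ is a square modulo $\pi^{2e-l}$; it does not determine $N_\Delta(\id p^k)$, because the count depends on \emph{how much further} $\Delta'$ remains a square. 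Concretely, in the hardest subcase ($l$ even, $l-2e<k<l$, and $\Delta'$ not a square mod $4$) one needs the quantity $\delta=\delta(\Delta')$, the largest integer $\le 2e-1$ with $\Delta'$ a square mod $\pi^{\delta}$; the count is $\N(\id p)^{\lfloor k/2\rfloor}$ for $2e+k-l\le\delta$ and $0$ beyond, and the reason this cutoff matches $\chi_\Delta(\id p^k)+\chi_\Delta(\id p^{k-1})$ is that $\delta$ is \emph{odd} (a consequence of $U_{2k}=U_{2k+1}U_k^2$, Lemma~\ref{lem:units-mod-squares}), so that the condition ``$2e+k'-l\le\delta$'' for the unique even $k'\in\{k,k-1\}$ is equivalent to the same condition for $k$ itself. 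This parity argument is the crux of the dyadic case and is absent from your sketch; ``matching the count term by term'' is precisely the step that cannot be waved through.

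A second, smaller omission: your dyadic discussion only treats $l\le 2e$ (where $l$ is forced to be even), but at a dyadic prime one can have $v(\Delta)=l>2e$ with $l$ odd (e.g.\ $\Delta\equiv 0\bmod 4$ with $v(\Delta)=2e+1$), and also the range $2e+k\le l$ where the congruence degenerates to a divisibility condition and yields $\N(\id p)^{\lfloor k/2\rfloor}$ solutions outright. These are easy, but a complete proof must run through them; the paper's case division ($2e+k\le l$; $2e+k>l$ with $l$ odd; $2e+k>l$ with $l$ even, further split by whether $\Delta'$ is a square mod $4$) is the checklist you would need to follow.
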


\begin{Remark}
  This formula~\eqref{eq:the-identity} is a classical fact if $K=\Q$
  and $\Delta$ is a negative fundamental discriminant of~$\Q$ (so that
  $\chi_{\Delta}$ is simply the character of~$\Q(\sqrt \Delta)$).
  In~\cite[Prop.~3]{Zagier} it is shown that the formula holds still
  true if $K=\Q$ and for all rational discriminants~$\Delta$,
  where~$L(\chi_\Delta,s)$ is defined
  by~\eqref{eq:chi-Delta-as-L-series} (with $k=\Q$). The theorem
  therefore shows that is holds true in any number field with the
  appropriate definition~\eqref{eq:chi-Delta} of~$\chi_\Delta$.
\end{Remark}

\subsection{Proof of Theorem~\ref{thm:counting-formula}}

\begin{proof}
  Denote the left hand side of the claimed identity by
  $S_\Delta(\id a)$. Note that $S_\Delta(\id a)$ is multiplicative
  in~$\id a$.  Indeed, the $\Delta\equiv s^2\bmod 4$ has a unique
  solution~$s$ mod~$2$. Hence $S_\Delta(\id a)$ equals the number of
  solutions mod~$\id a 2_{\id a}$ of
  $x^2\equiv\Delta\bmod \id a 2_{\id a}^2$, where $2_{\id a}$ is the
  product of all prime powers $\id p^e$ exactly dividing~$2$ where
  $\id p\mid a$. Using this description of $S_\Delta(\id a)$ the
  claimed multiplicativity follows now from the Chinese remainder
  theorem.

  It suffices therefore to prove~\eqref{eq:key-identity} for prime
  ideal powers~$\id p^k$ ($k\ge 1$), i.e.~it suffices to prove, for
  $k\ge 1$,
  \begin{equation}
    \label{eq:key-identity-local}
    S_\Delta(\id p^k)
    =
    \chi_\Delta(\id p^k)+\chi_\Delta(\id p^{k-1})
    .
  \end{equation}
  Moreover, we can replace $\D$ and $\id p$ in the definition of
  $S_\Delta(\id p^k)$ by the localization $\D_{\id p}$ and the
  principal ideal $\widehat{\id p}=\pi\D_{\id p}$, where $\pi$ is a
  uniformizing parameter for the maximal ideal of~$\D_{\id p}$.

  Assume first of all $\id p \nmid \Delta$. The right hand side
  of~\eqref{eq:key-identity-local} equals then $1+\chi_\Delta(\id p)$,
  which is~$2$ or~$0$ according as $\Delta$ is a square mod~$4\id p$
  or not. This proves~\eqref{eq:key-identity-local} for $k=1$.  But
  then~\eqref{eq:key-identity-local} is also true for all $k\ge 1$
  since $S_\Delta(\id p^k)=S_\Delta(\id p)$. For this it suffices to
  show that, for $k\ge 1$, the canonical reduction map
  $\rho:S_\Delta(\id p^{k+1})\rightarrow S_\Delta(\id p^{k})$ is a
  bijection.  Indeed, let $x$ be in~$S_\Delta(\id p^{k})$, and let $y$
  in $x+2\widehat{\id p}^k$, say $y=x+2\pi^kt$ for some $t$
  in~$\D_{\id p}$. The congruence
  \begin{equation*}
    \left(x+2\pi^k t\right)^2\equiv \Delta\bmod 4\pi^{k+1}   
  \end{equation*}
  is equivalent to $xt\equiv \frac {\Delta - x^2}{4\pi^k} \bmod \pi$,
  which has exactly one solution mod~$\pi$ (since $\pi\nmid x$).

  Next, suppose that $\id p^l$ for some $l\ge 1$ is the exact power
  of~$\id p$ dividing~$\Delta$, and let~$\id p^e$ be the exact
  $\id p$-power dividing~$2$.
  
  For ${2e+k} \le l$, the congruence $x^2\equiv \Delta\bmod 4\id p^k$
  is equivalent to $\id p^{e+\lceil k/2\rceil} \mid x$,
  $x\equiv s\bmod 2$ (where $s^2\equiv\Delta\bmod4$), and hence has
  $\N(\id p)^{\lfloor k/2\rfloor}$ solutions mod~$2\id p^k$.  But
  $\N(\id p)^{\lfloor k/2\rfloor}$ equals also the right hand side
  of~\eqref{eq:key-identity} since one of the terms is zero and the
  other one equals $\N(\id p)^{\lfloor k/2\rfloor}$. (For the
  verification note that $2e\le l-k$, so that
  $\Delta/\id p^{2\lfloor k/2\rfloor}$ is divisible by~$\id p^{2e}$,
  and hence still a square modulo~$4$.

  For $2e+k>l$ and odd~$l$, the congruence
  $x^2\equiv \Delta\bmod 4\id p^k$ has no solution (since a
  square~$x^2$ cannot have the odd $\id p$-power $\id p^l$ as exact
  divisor). But the right hand side of~\eqref{eq:key-identity-local}
  is also zero since, for any $k'$ with $2e+k'\ge l$, either the gcd
  of $\Delta$ and~$\id p^{k'}$ equals~$\id p^l$ (which is not a
  square), or else it equals $\id p^{k'}$, where $\pi^{k'}$ is not a
  square, or where $2e>l-k'$ and hence $\Delta/\pi^{k'}$ is not a
  square mod~$4$).

  Finally, let $2e+k>l$ and $l$ be even.  Then the congruence
  $x^2\equiv \Delta\bmod 4\id p^k$ is equivalent to
  $x\equiv \pi^{l/2}y\bmod 2\pi^k$ and
  $y^2\equiv \Delta/\pi^l\bmod 4\pi^{k-l}$. In other words,
  \begin{equation*}
    S_\Delta(\id p^k)
    =\card {\left\{
        y\in \D_{\id p}/2\pi^{k-l/2}:
        y^2\equiv \Delta'\bmod 4\pi^{k-l}
      \right\}}
    ,
  \end{equation*}
  where $\Delta'=\Delta/\pi^l$.

  Suppose $\Delta'$ is a square mod~$4$. Then, for $k>l$, we have
  \begin{equation*}
    S_\Delta(\id p^k)=\N(\id p)^{l/2}S_{\Delta'}(\id p^{k-l})
    =
    \N(\id p)^{l/2}\left(\chi_{\Delta'}(\id p^{k-l})+\chi_{\Delta'}(\id p^{k-l-1})\right)
    ,
  \end{equation*}
  where the second identity follows from the already proven validity
  of~\eqref{eq:key-identity-local} for squares $\Delta$ mod~$4$ not
  divisible by~$\id p$. But the right hand side of the last identity
  equals the right hand side of~\eqref{eq:key-identity-local}.
  If $k\le l$ then $y^2\equiv \Delta'\bmod 4\pi^{k-l}$ is equivalent
  to $y\equiv s\bmod 2\pi^{\lceil \frac {k-l}2\rceil}$. Hence
  $S_\Delta(\id \pi^k)=\N(\id p)^{\lfloor k/2\rfloor}$, which again
  proves~\eqref{eq:key-identity-local}.

  Suppose now that $\Delta'$ is not a square mod~$4$. Then, for
  $k\ge l$ both sides of~\eqref{eq:key-identity-local} equal~$0$.  If
  $l>k$ (and $k>l-2e$, so that in particular, $e\ge 1$) then
  $y^2\equiv \Delta'\bmod 4\pi^{k-l}$ might have a solution or
  not. Let $\delta=\delta(\Delta')$ be the largest integer
  $1\le \delta\le 2e-1$ such that $\Delta'$ is a square
  mod~$\pi^{\delta}$. Note that $\delta$ must be odd (see
  Lemma~\ref{lem:units-mod-squares} below).  Note also that $\Delta'$
  is a square mod~$\id p$ (since squaring is the Frobenius isomorphism
  in a field of characteristic~$2$). Note also that the congruence
  $\Delta'\equiv x^2\bmod \pi^{t}$ with $t<2e$ determines $x$ uniquely
  mod~$\pi^{\lceil t/2\rceil}$. We therefore find
  $S_\Delta(\id p^k)=\N(\id p)^{\lfloor k/2\rfloor}$ for
  $1\le 2e+k-l \le \delta$, and $S_\Delta(\id p^k)=0$ for
  $\delta+l-2e<k<l$. Again this equals the right hand side
  of~\eqref{eq:key-identity-local} since, for $k'=k$ or $k'=k-1$, we
  have $(\Delta,\id p^{k'})=\id p^{k'}$ and exactly one of these $k'$
  is even, and for this~$k'$ we have
  $\chi_\Delta(\id p^{k'})=\N(\id p)^{\lfloor k/2\rfloor}$ or
  $\chi_\Delta(\id p^{k'})=0$ according as $\Delta/\pi^{k'}$ is a
  square mod~$4$, or not. But $\Delta/\id p^{k'}$ being a square
  mod~$4$ is equivalent to $\Delta'$ being a square mod~$2e+k'-l$,
  i.e.~$2e+k'-l\le \delta$.  Since $\delta$ is odd, the latter is
  equivalent to $2e+k-l\le \delta$.  This completes the proof of
  Theorem~\ref{thm:counting-formula}.
\end{proof}

\subsection{The function $\zeta(\Delta,s)$ in the non-relative case}

If $K$ equals the field of rational numbers it can be quickly verified
that, for any pair $(m,n)$ of relatively prime integers, one has for
the series~\eqref{eq:N-zeta-definition} the identity
\begin{equation*}
  \zeta(\Delta,s)
  =
  \sum_{
    \begin{subarray}c
      f\in \mathcal{F}(\Delta)/\SL [\Z]_{(m,n)}
      \\
      f(m,n)>0
    \end{subarray}
  }
  \frac 1{f(m,n)^s}
  ,
\end{equation*}
where the sum is over a complete system of representatives for the set
$\mathcal{F}(\Delta)$ of integral binary quadratic forms
$f=[a,b,c]=ax^2+bxy+cy^2$ satisfying $b^2-4ac = \Delta$ and $f(m,n)>0$
modulo the stabilizer of $(m,n)$ in $\SL [\Z]$. The set of orbits is
defined via the usual action $(f,A)\mapsto f\left(A(x,y)^t\right)$ of
$\SL [\Z]$ on binary forms. The stated identity is obvious for
$(m,n)=(1,0)$: Here we have $\SL [\Z]_{(m,n)}=\langle \mat
1101\rangle$, and hence the series on the right of the last identity
becomes $\sum_{[a,b,c]}{a^{-s}}$, the sum being over a set of
representatives $[a,b,c]$ for $\mathcal{F}^+(\Delta)/\langle \mat
1101\rangle$, where the `$+$' indicates the subset of all $[a,b,c]$ in
$\mathcal{F}(\Delta)$ with $a>0$. As representatives one can take the
forms $[a,b,c]$ in $\mathcal{F}^+(\Delta)$ with $0\le b<2a$, and one
recognizes the series $\zeta(\Delta,s)$.

One can write the identity of the last paragraph also in another way.
According to the well-known theory of binary quadratic forms (see
e.g.~\cite[]{}) The application $[a,b,c]\mapsto \id A_{[a,b,c]}:=\Z a
+ \Z\frac {-b+\sqrt \Delta}2$ maps $\mathcal{F}(\Delta)$ to the set of
ideals of the order $R_\Delta:=\Z+\Z\frac {\Delta+\sqrt
  \Delta}2=\Z+\Z\frac {-b+\sqrt \Delta}2$. In fact, it induces a
bijection $\mathcal{F}^+(\Delta)/\langle \mat 1101\rangle\cong
\mathcal{I}(\Delta)/\Q^*$, where $\mathcal{I}(\Delta)$ is the set of
all fractional ideal of~$R_\Delta$. The ideals of $R_\Delta$ in $\id
A_{[a,b,c]}\Q^*$ are $\id A_{[a,b,c]}\Z_{\ge1}$, and hence the
identity of the last paragraph can be stated as
\begin{equation*}
  \zeta(2s)\zeta(\Delta,s)=\sum_{\id A}\frac1{[R_\Delta:\id A]^s}
  ,
\end{equation*}
where the sum on the right is over all ideals of $R_\Delta$. In terms
of $L(\chi_\Delta,s)$ this can also be written (using
Theorem~\ref{thm:counting-formula}) as
\begin{equation*}
  \zeta(s)L(\chi_{\Delta},s) = \sum_{\id A}\frac1{[R_\Delta:\id A]^s}
  .
\end{equation*}
As we shall show in the next section the last two identities
generalize to relative quadratic extensions.

\subsection{Orders and $\D$-lattices of relative quadratic extensions}

We fix again a number field $K$ with ring of integers $\D$. Let $L$ be
a quadratic extension of $K$ and $\DL$ the ring of integers of $L$.
An {\em $\D$-lattice in $L$} is a finitely generated $\D$-submodule of
$L$ containg a $K$-basis of $L$. If $\id L$ is an $\D$-lattice in $L$ we
define the {\em order of $\id L$} to be the set of elements $a$ in $L$
such $a\id L\subseteq \id L$. This order is then an order of $L$ in the usual
sense, i.e.~it is a subring of finite index in $\DL$. Note that
$\DL$ contains $\D$, and that every order of $L$ containing $\D$ is
the order of an $\D$-module (namely the order of itself, viewed as
$\D$-module). If $\Delta$ is a discriminant in $K$ and $b$ in $\D$ a
solution of of $\Delta\equiv b^2\bmod 4$ we set
\begin{equation*}
  \omega_{\Delta,b}:=\frac {b+\sqrt \Delta}2
  .
\end{equation*}
If $L=K(\sqrt \Delta)$ then $\omega_{\Delta,b}$ is an integral element
of~$L$.

\begin{Lemma}
  \label{lem:key-lemma}
  Every $\D$-lattice in $L$ containing $\D$ is of the form
  $\id o + \id a^{-1} \omega_{\Delta,b}$ with an integral ideal $\id a$
  of~$K$, a discriminant $\Delta$ of~$K$, and an integer $b$ of $\D$
  such that $\Delta\equiv b^2\bmod 4$.
\end{Lemma}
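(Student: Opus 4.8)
Let $\id M$ be an $\D$-lattice in $L$ containing $\D$. Since $\id M$ contains $\D$, which already contains a $K$-basis element $1$, and $\id M$ has $K$-rank $2$, I would first argue that $\id M/\D$ is an $\D$-submodule of the one-dimensional $K$-vector space $L/K$. Picking any $\xi\in\id M\setminus K$, we may write $L=K\oplus K\xi$, and then $\id M/\D$ is identified with a finitely generated $\D$-submodule of $K\xi\cong K$, hence with a fractional ideal $\id c$ of $K$, say $\id M/\D=\id c\,\xi$. Thus $\id M=\D+\id c\,\xi$ for some $\xi\in L\setminus K$ and some fractional ideal $\id c$.

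The next step is to normalize $\xi$. Replacing $\xi$ by $c\xi$ for $c\in\units K$ replaces $\id c$ by $c^{-1}\id c$ and leaves $\id M$ unchanged, and one is also free to add an element of $K$ to $\xi$ and to rescale $\id c$ correspondingly; so I want to choose the representative of the form $\xi=\frac{b+\sqrt\Delta}{2\alpha}$ for suitable $b\in\D$, a discriminant $\Delta$, and $\alpha\in K$, after which $\id c\,\xi$ takes the shape $\id a^{-1}\omega_{\Delta,b}$ with $\id a$ integral. Concretely, write $\xi=u+v\sqrt D$ for some fixed square-free-ish generator $\sqrt D$ of $L/K$ with $u,v\in K$; set $\Delta':=4v^2D$, so $L=K(\sqrt{\Delta'})$ and $2v\sqrt D=\sqrt{\Delta'}$ up to sign, giving $\xi=u+\tfrac12\sqrt{\Delta'}$. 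The ideal $\id c$ can be absorbed: choose $\alpha\in\units K$ so that $\alpha\id c$ is integral, and replace $\xi$ by $\alpha^{-1}\cdot(\alpha\xi)$, i.e. put $\Delta:=\alpha^2\Delta'$, $b:=2\alpha u$ (adjusted modulo $2\D$ so that $b\in\D$, which is possible since we may shift $\xi$ by elements of $\tfrac12\D\cap K$), and $\id a:=(\alpha\id c)^{-1}$. One checks $\Delta=b^2-4\cdot\N(\omega_{\Delta,b})$-type identity forces $\Delta\equiv b^2\bmod 4$; indeed $\omega_{\Delta,b}=\frac{b+\sqrt\Delta}2$ satisfies $x^2-bx+\frac{b^2-\Delta}4=0$, so $b\in\D$ together with the requirement that this be a monic polynomial over $\D$ is exactly the condition $\Delta\equiv b^2\bmod 4$. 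Then $\id M=\D+\id c\,\xi=\D+\id a^{-1}\omega_{\Delta,b}$ as desired.

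The main obstacle I anticipate is the bookkeeping in the normalization step: making sure that the fractional ideal $\id c$ really can be cleared against the scaling freedom in $\xi$ so as to end up with an \emph{integral} ideal $\id a$ with $\id c\,\xi = \id a^{-1}\omega_{\Delta,b}$ on the nose, and simultaneously that $b$ can be taken \emph{integral} with $\Delta\equiv b^2\bmod 4$. The point is that the two freedoms — multiplying $\xi$ by units of $K$ (which rescales $\id c$) and translating $\xi$ by elements of $K$ (which changes $b$) — must be used together: first translate so that the "$K$-part" of $\xi$ is a half-integer, i.e. lies in $\tfrac12\D$, which fixes $b\in\D$; then the condition that $\xi$ be a root of a monic quadratic over $\D$ after scaling forces $\Delta$ to be a discriminant; and finally a unit scaling turns $\id c$ into an integral ideal's inverse. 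Once one sees that these operations commute appropriately, the rest is a direct verification. (Conversely, it is immediate that every module of the stated form is an $\D$-lattice containing $\D$, so the lemma is an exact characterization, though only one direction is asserted.)
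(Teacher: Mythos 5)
Your overall plan is the paper's: write $\id M=\D+\id c\,\xi$ by identifying the rank-one quotient $\id M/\D$ with a fractional ideal, then normalize $\xi$ into the shape $\omega_{\Delta,b}$ divided into an ideal. For the first step the paper is slightly more careful than you are: from $\id M/\D\cong\id c$ one only gets $\id M=\D+\id c\,\omega$ for \emph{some} $\omega$ after splitting the sequence $0\to\D\to\id M\to\id c\to 0$ using projectivity of $\id c$ (the section $s$ satisfies $s(\id c)=\id c\,s(1)$); your chosen $\xi$ may first have to be translated by an element of $K$. That point is standard and easily supplied.

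The normalization step is where your write-up would actually fail, in two places. First, choosing $\alpha$ so that $\alpha\id c$ is \emph{integral} makes $\id a:=(\alpha\id c)^{-1}\supseteq\D$, which is an integral ideal only when $\id c$ is principal; what you need is the opposite condition $\alpha\in\id c$, i.e.\ $\alpha\id c^{-1}$ integral (note $\id c\supseteq\D$ because $\xi\in\id M$). Second, your $\xi$ need not be integral over $\D$, so ``the requirement that $\omega_{\Delta,b}$ be a root of a monic polynomial over $\D$'' is not available for free, and the proposed repair --- shifting $\xi$ by elements of $\tfrac12\D$ to force $b\in\D$ --- changes the lattice: only translations by elements of $\id c^{-1}\subseteq\D$ preserve $\D+\id c\,\xi$. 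The paper's device resolves both problems simultaneously: take the \emph{non-monic} relation $a\xi^2-b\xi+c=0$ with $a,b,c\in\D$, set $\Delta:=b^2-4ac$ (so automatically $\Delta\equiv b^2\bmod 4$ and $b\in\D$), whence $\xi=\frac{b+\sqrt\Delta}{2a}$ and $\id c\,\xi=(\id c/a)\,\omega_{\Delta,b}$ with $a\id c^{-1}$ integral. Equivalently, in your language: choose a single $\alpha\in\id c$ such that $\alpha\xi$ is an algebraic integer, and read off $b=\tr_{L/K}(\alpha\xi)$ and $\Delta=b^2-4\N_{L/K}(\alpha\xi)$. With that correction your argument closes up and coincides with the paper's.
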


\begin{proof}
  Let $\id L$ denote a $\D$-lattice in $L$ containing $\D$.
  Since the $\D$-module $\id L/\D$ is of rank~$1$, it is isomorphic to
  a fractional ideal~$\id b$. Multiplying $\id b$ by a suitable element
  of~$\units K$ we can assume that $\id b^{-1}$ is integral. We then
  have the exact sequence of $\D$-modules
  \begin{equation*}
    0
    \xrightarrow{}\D
    \xrightarrow{\subseteq}\id L
    \xrightarrow{\pi}\id L/\D
    \xrightarrow{f}\id b
    \xrightarrow{}0
    ,
  \end{equation*}
  where $\pi$ is the canonical projection and $f$ and isomorphism.
  Since $\id b$ is projective we can find a section
  $s:\id b\rightarrow \id L$ (i.e.~$f\circ \pi\circ s=1$). It follows
  $\id L=\id o+s(\id b)$. Setting $\omega := s(1)$ we obtain
  $s(\id b)=\id b\omega$ (If $n$ is in $\D$ such that $n\id b$ is
  integral, then $ns(\id b)=s(n\id b)=n\id b s(1)$ since $s$ is a
  $\D$-homomorphism.) But $a\omega^2-b\omega+c=0$ for suitable $a,b,c$
  in~$\D$, and hence $\id b\omega = (\id b/a)\frac {b+\sqrt \Delta}2$
  with $\Delta=b^2-4ac$. The lemma is now obvious.
\end{proof}

\begin{Proposition}
  \label{prop:description-of-orders}
  Let $\Delta$ be a discriminant of~$K$, $L=K(\sqrt \Delta)$ and $b$
  in $\D$ a solution of $\Delta\equiv b^2\bmod 4\id f_{\Delta}^2$.
  \begin{enumerate}
  \item One has $\DL=\D+\id f_\Delta^{-1}\omega_{\Delta,b}$.
  \item Every order of $L$ containg $\D$ is of the form
    \begin{equation*}
      \DL_{\id c}
      := \D +\id c \DL
      = \D + \id c \id f_\Delta^{-1}\omega_{\Delta,b}
    \end{equation*}
    for a unique integral ideal $\id c$ of $K$.
  \end{enumerate}
\end{Proposition}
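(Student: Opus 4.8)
The plan is to reduce everything to the local description of the ring of integers that already appeared in the proof of Theorem~\ref{thm:discriminant-formula}, and then reassemble the global statements. First I would prove (1). Write $L = K(\sqrt\Delta)$ and let $\omega = \omega_{\Delta,b} = (b+\sqrt\Delta)/2$, which is integral in $L$ since $\Delta\equiv b^2\bmod 4$; it satisfies $\omega^2 - b\omega + (b^2-\Delta)/4 = 0$. It suffices to check $\DL\D_{\id p} = \D_{\id p} + (\id f_\Delta^{-1}\omega)\D_{\id p}$ for every prime $\id p$ of $K$. For $\id p \nmid \id f_\Delta$ the right side is $\D_{\id p} + \D_{\id p}\omega$, and one reads off from the explicit local basis computed in the proof of Theorem~\ref{thm:discriminant-formula} (recalling $D_{L/K} = \Delta/\id f_\Delta^2$, which is coprime to $\id f_\Delta$, and that $b^2\equiv\Delta\bmod 4\id f_\Delta^2$ forces $b^2 \equiv \Delta \bmod \id p^{2t}$ with $t = v_{\id p}(\id t)$ in the notation there) that $\omega$, or a unit multiple of it, is exactly the second basis element $\omega$ used there, so the discriminants match: $(\omega-\omega')^2\D_{\id p} = (\Delta/\id t^2)\D_{\id p} = D_{L/K}\D_{\id p}$, forcing equality of the two $\D_{\id p}$-orders. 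For $\id p \mid \id f_\Delta$, since $\id f_\Delta^2 \mid \Delta$ and $v_{\id p}(\id t) = 0$ there (as $\id f_\Delta = \id s\id t/2$ with $\id s$ the square part), the element $(\id f_\Delta^{-1}\omega)\D_{\id p} = \pi^{-g}\omega\D_{\id p}$ where $g = v_{\id p}(\id f_\Delta) \le \lfloor l/2\rfloor$; one checks directly that $\pi^{-g}\omega = (\pi^{-g}b + \pi^{-g}\sqrt\Delta)/2$ is integral (its trace $\pi^{-g}b$ and norm $\pi^{-g\cdot 2}(b^2-\Delta)/4$ lie in $\D_{\id p}$ because $\id p^{2g}\mid \id f_\Delta^2 \mid \Delta$ and $\id p^{2g}\mid b^2-\Delta$ up to the factor $4$, using $b^2\equiv\Delta\bmod 4\id f_\Delta^2$), and that it is the optimal such element, so it generates $\DL\D_{\id p}$ over $\D_{\id p}$ as in the first branch of the case analysis of Theorem~\ref{thm:discriminant-formula}. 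This gives (1).

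For (2): let $R$ be an order of $L$ containing $\D$. Then $R$ is in particular an $\D$-lattice in $L$ containing $\D$, so by Lemma~\ref{lem:key-lemma} we may write $R = \D + \id a^{-1}\omega_{\Delta',b'}$ for some integral ideal $\id a$, discriminant $\Delta'$ of $K$, and $b'\in\D$ with $\Delta'\equiv b'^2\bmod 4$. Since $R$ is a subring of $\DL$, replacing $\Delta'$ by its class we may assume $K(\sqrt{\Delta'}) = L$, and then by (1) applied to $\Delta'$ we have $\DL = \D + \id f_{\Delta'}^{-1}\omega_{\Delta',b''}$ for an appropriate $b''$; adjusting $b'$ modulo $2\id f_{\Delta'}^2$ (which changes $\omega_{\Delta',b'}$ by an element of $\D + \id f_{\Delta'}^{-1}\cdot\text{(integral)}$, hence stays inside $\DL$ and does not change the $\D$-module generated once we also absorb the ambiguity into $\id a$) we may take $\omega_{\Delta',b'} = \omega_{\Delta',b''}$ up to an element of $\D$. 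Hence $R = \D + \id a^{-1}\cdot(\text{unit}\cdot\id f_{\Delta'}\DL\text{'s generator})$, i.e. $R = \D + (\id f_{\Delta'}\id a^{-1}\cap\text{integral part})\DL$; cleaning this up, $R \cap \DL = R$ shows $\id a^{-1}\omega_{\Delta',b'} \subseteq \DL$ forces $\id f_{\Delta'} \mid \id a$, and setting $\id c := \id a\,\id f_{\Delta'}^{-1}$, an integral ideal, one obtains $R = \D + \id c\DL$. Finally I would rewrite this in terms of the fixed data $(\Delta,b)$ of the statement rather than $(\Delta',b')$: since $\DL = \D + \id f_\Delta^{-1}\omega_{\Delta,b}$ by (1), we get $\DL_{\id c} = \D + \id c\DL = \D + \id c\id f_\Delta^{-1}\omega_{\Delta,b}$.

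Uniqueness of $\id c$: if $\D + \id c\DL = \D + \id c'\DL$ with $\id c,\id c'$ integral, localize at each $\id p$. Using (1) locally, $\DL\D_{\id p} = \D_{\id p} + \D_{\id p}\eta_{\id p}$ for a suitable generator $\eta_{\id p}$ with $\eta_{\id p}\notin K$, and then $\D_{\id p} + \pi^{v_{\id p}(\id c)}\D_{\id p}\eta_{\id p} = \D_{\id p} + \pi^{v_{\id p}(\id c')}\D_{\id p}\eta_{\id p}$ inside $L$ forces $v_{\id p}(\id c) = v_{\id p}(\id c')$ (comparing the $\D_{\id p}$-module of "non-$\D_{\id p}$ parts", e.g. via the conductor $v_{\id p}(\id c) = v_{\id p}(\mathfrak{d}_{\DL/R})$ or simply by noting the index $[\DL\D_{\id p} : R\D_{\id p}] = \N(\id p)^{v_{\id p}(\id c)}$). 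Hence $\id c = \id c'$.

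The main obstacle I expect is the local analysis at the primes $\id p \mid \id f_\Delta$ in part (1) and, relatedly, the bookkeeping in part (2) that turns the raw output of Lemma~\ref{lem:key-lemma} into an ideal $\id a$ that is automatically divisible by $\id f_{\Delta'}$ — i.e.\ correctly identifying that the "conductor gap" between an arbitrary lattice-presentation and the true maximal order $\DL$ is exactly $\id f_{\Delta'}$. Everything else is a direct transcription of the discriminant computation already carried out in the proof of Theorem~\ref{thm:discriminant-formula} together with the Chinese-remainder / localization principle.
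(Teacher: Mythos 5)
Your part (1) takes a genuinely different route from the paper's: the paper argues globally, applying Lemma~\ref{lem:key-lemma} to $\DL$ itself and deducing $\id a\mid\id f_{\Delta'}$ from integrality of $\DL$ and the reverse inclusion from integrality of $\id f_{\Delta'}^{-1}\omega_{\Delta',b'}$, whereas you localize and match bases against the proof of Theorem~\ref{thm:discriminant-formula}. Your route can be made to work, and the cleanest way to close it is the discriminant comparison you gesture at: the trace/norm computation you give shows $\D+\id f_\Delta^{-1}\omega_{\Delta,b}\subseteq\DL$, and this sublattice has discriminant $\id f_\Delta^{-2}(\omega_{\Delta,b}-\omega_{\Delta,b}')^2=\Delta/\id f_\Delta^2=D_{L/K}$, forcing equality. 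The phrase ``it is the optimal such element'' at primes dividing $\id f_\Delta$ is doing no work and should be replaced by that index count.

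The genuine problem is in part (2). From $\id a^{-1}\omega_{\Delta',b'}\subseteq\DL=\D+\id f_{\Delta'}^{-1}\omega_{\Delta',b''}$ one gets, by comparing images in $L/K$, that $\id a^{-1}\subseteq\id f_{\Delta'}^{-1}$, i.e.\ $\id a\mid\id f_{\Delta'}$ --- the opposite of your ``forces $\id f_{\Delta'}\mid\id a$''. Consequently the conductor must be $\id c=\id f_{\Delta'}\id a^{-1}$, not $\id c=\id a\,\id f_{\Delta'}^{-1}$; with your $\id c$ the claimed identity $R=\D+\id c\DL$ fails outright, since the $\omega$-component of $\D+\id c\DL$ would be $\id a\,\id f_{\Delta'}^{-2}\omega$ rather than $\id a^{-1}\omega$ (test it on $R=\D+\id p\DL$ when $\id f_{\Delta'}=\D$). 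A second, smaller gap: ``absorb the ambiguity'' when passing from $b'$ to $b''$ is not automatic, because replacing $b'$ by $b''$ changes $\id a^{-1}\omega_{\Delta',b'}$ by $\id a^{-1}(b'-b'')/2$, which lies in $\id a^{-1}$ but not obviously in $\D$; one must actually prove $b'\equiv b''\bmod 2\id a$, which follows from $\id a\mid\id f_{\Delta'}\mid b''$ and $b'^2\equiv b''^2\bmod 4\id a^2$ but needs to be said. The paper sidesteps all of this bookkeeping by defining $\id c$ directly as the annihilator of $\DL/R$ and verifying $R=\D+\id c\DL$ by a one-line element computation; I would either adopt that route or repair the two points above.
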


\begin{proof}[Proof of Proposition~\ref{prop:description-of-orders}]
  For (1) we use the preceding lemma to write
  $\DL=\id o+\id a^{-1}\omega_{\Delta',b'}$ with an integral ideal
  $\id a$, a discriminant $\Delta'$ of~$K$ and integral $b'$ with
  $\Delta'\equiv b'^2\bmod 4$. Since the elements of~$\DL$ are
  integral we conclude $\id a\mid b'$, and
  $\id a^2\mid \frac {{b'}^2-\Delta'}4$. It follows
  $\id a\mid \id f_{\Delta'}$, hence
  $\DL\subseteq \D+\id f_{\Delta'}^{-1}\omega_{\Delta',b'}$, and then even
  equality since the elements of $\id f_{\Delta'}^{-1}\omega_{\Delta',b'}$
  are integral.  Finally one has
  $\id f_{\Delta'}^{-1}\omega_{\Delta',b'}=\id f_{\Delta}^{-1}\omega_{\Delta,b}$
  as is quickly shown on writing $\Delta'=a^2\Delta$ with $a$ in~$K$
  and using $f_{\Delta'}=a\id f_{\Delta}$ (see
  Corollary~\ref{prop:fD-class-invariance}).

  For (2) let $R$ be an order of~$L$ containing~$\D$. Then $R$ is of
  finite index in~$\DL$. Let $\id c$ be the annihilator of
  $\DL/R$. Note that by~(1) $\id c$ equals the ideal of all $a$
  in~$\D$ such that $a\id f_\Delta^{-1}\omega_{\Delta,b}\subseteq R$
  since $\D$ is contained in $R$. We claim $R=\D+\id c \DL$. The
  inclusion~`$\supseteq$' is trivial. Let $a$ be in $R$. Then
  $a=x+y\omega_{\Delta,b}$ for suitable $x$ in $\D$ and $y$ in
  $\id f_\Delta$. But then
  $\D (a-x)=y\D \omega_{\Delta,b}=(y\id
  f_\Delta)f_\Delta^{-1}\omega_{\Delta,b}\in R$,
  which implies $yf_\Delta\subseteq \id c$. Finally, the $\id c$ in
  the representation $R=\D+\id c\DL$ is unique since, for any integral
  ideal~$\id c$ in $K$ the annihilator of $\DL/(\D+\id c\DL)$ equals~$\id c$.
\end{proof}

For a discriminant $\Delta$ of $K$ let $\mathcal Q(\Delta)$ be the
set of all pairs $(\id a,b)$, where $\id a$ is an integral ideal in
$K$, where $b$ is in $\D$ and where $\Delta\equiv b^2\bmod 4\id a$.
For $(\id a,b)$ in $\mathcal Q(\Delta)$ set
\begin{equation*}
  \id I_{\id a,b,\Delta}
  :=
  \id a + \D \omega_{\Delta,b}
  .
\end{equation*}
Suppose $L=K(\sqrt \Delta)$. Then $\id I_{\id a,b,\Delta}$ is clearly an
ideal of $\DL_{\id f_\Delta}$ (as is immediatelty clear since by
noting that the preceding proposition implies
$\DL_{\id f_\Delta}=\D + \D\omega_{\Delta,b}$ for every solution $b$ of $\Delta\equiv b^2\bmod 4$). We let
$I_L(\Delta)$ and $I_K$ denote the monoids of fractional ideals of~$\D_{\id f_{\Delta}}$ and $\D$. (Recall that a fractional ideal of
$\DL_{\id f_{\Delta}}$ is a finitely generated
$\DL_{\id f_{\Delta}}$-submodule $\id A$ of $L$.)

\begin{Theorem}
  \label{thm:zeta-delta-interpretation}
  Let $\Delta$ be a discriminant in $K$ such that $L=K(\sqrt \Delta)$.
  The application $(\id a,b)\mapsto \id I_{\id a,b,\Delta}$ induces a
  bijection
  \begin{equation*}
    \mathcal Q(\Delta)/\mathord{\thicksim} \longrightarrow I_L(\Delta)/I_K
    ,
  \end{equation*}
  where $(\id a,b)\sim(\id a',b')$ if $\id a=\id a'$ and $b\equiv
  b'\bmod 2\id a$.  In particular, one has
  \begin{equation}
    \label{eq:order-zeta-function}
    \sum_{\id A} [\DL_{\id f_\Delta}:\id A]^{-s}
    =
    \zeta_K(s)L(\chi_\Delta,s)
    =
    \zeta_K(2s) \zeta(\Delta,s)
    ,
  \end{equation}
  with $\chi_\Delta$ as in~\eqref{eq:chi-Delta}, and where the sum on
  the left is over all ideals of~$\DL_{\id f_\Delta}$.
\end{Theorem}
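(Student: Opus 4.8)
The plan is to establish the bijection first and then harvest the three equalities in \eqref{eq:order-zeta-function} as bookkeeping consequences. For the bijection, I would argue as follows. Every fractional ideal $\id A$ of $\DL_{\id f_\Delta}$ is in particular a $\D$-lattice in $L$, and it contains $\D$ up to multiplication by an element of $\units K$; indeed, scaling $\id A$ by a suitable element of $K$ we may assume $\id A\subseteq \DL_{\id f_\Delta}$ with $\id A\cap K$ an integral ideal of $\D$ and, after a further scaling, that $\id A$ actually contains $\D$ (this uses that $\id A$ has a $K$-basis of $L$, so $\id A\cap K\ne 0$). Applying Lemma~\ref{lem:key-lemma} we may write $\id A=\D+\id a^{-1}\omega_{\Delta',b'}$ for some integral ideal $\id a$, discriminant $\Delta'$, and $b'$ with $\Delta'\equiv b'^2\bmod 4$. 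Since $\id A$ is a module over $\DL_{\id f_\Delta}=\D+\D\omega_{\Delta,b}$ and $\id f_{\Delta'}^{-1}\omega_{\Delta',b'}=\id f_\Delta^{-1}\omega_{\Delta,b}$ (as in the proof of Proposition~\ref{prop:description-of-orders}), a short computation shows $\id a^{-1}$ must be an integral ideal, say $\id a^{-1}=\id c$, and $\id A=\D+\id c\omega_{\Delta,b}$ up to the identification of generators; I would then record that $\id A$ is $I_K$-equivalent to $\id c\bigl(\id c^{-1}+\D\omega_{\Delta,b}\bigr)=\id I_{\id c^{-1},b,\Delta}$, so the map is onto. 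For injectivity and well-definedness I would check directly that $\id I_{\id a,b,\Delta}$ and $\id I_{\id a',b',\Delta}$ are $I_K$-equivalent iff $\id a=\id a'$ and $b\equiv b'\bmod 2\id a$: the ``if'' direction is immediate since $b-b'\in 2\id a$ forces $\omega_{\Delta,b}-\omega_{\Delta,b'}\in\id a$; for ``only if'', an $I_K$-equivalence is multiplication by some $\lambda\in\units K$, and comparing the $\D$-module structure (the ``$K$-part'' $\id I\cap K$ and the induced class in $\id I/(\id I\cap K)$) pins down $\lambda\in\units K$ and then the congruence on $b$.

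Given the bijection, the first equality in \eqref{eq:order-zeta-function} is obtained by summing $[\DL_{\id f_\Delta}:\id A]^{-s}$ over $I_L(\Delta)$. Writing $\id A=\lambda\,\id I_{\id a,b,\Delta}$ with $\lambda\in\units K$ ranging so as to make $\id A$ integral, the index $[\DL_{\id f_\Delta}:\id A]$ picks up a factor $\N_{L/K}$ applied to the principal ideal generated by $\lambda$, hence $\N(\lambda\D)^{2}=\N(\lambda\D)\cdot\N(\lambda\D)$ after taking absolute norms; organizing the sum as a sum over $I_K$ (contributing $\zeta_K(s)$ once one tracks that the relative norm of a $K$-ideal is its square, so absolute norms match) times a sum over the $\sim$-classes in $\mathcal Q(\Delta)$ gives the factor $L(\chi_\Delta,s)$ once one identifies $[\DL_{\id f_\Delta}:\id I_{\id a,b,\Delta}]$ with $\N(\id a)$ and checks that $\sum_{(\id a,b)/\sim}\N(\id a)^{-s}=L(\chi_\Delta,s)$. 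That last identity is itself proved locally: for a prime power $\id p^k$, the number of $b\bmod 2\id p^k$ with $\Delta\equiv b^2\bmod 4\id p^k$ is exactly $N_\Delta(\id p^k)$ from \S\ref{sec:zeta-functions}, and the Euler-factor computation carried out in the proof of Theorem~\ref{thm:counting-formula} already delivers $\sum_{\id a}N_\Delta(\id a)\N(\id a)^{-s}=\zeta_K(2s)\zeta(\Delta,s)=\zeta_K(2s)\zeta_K(s)^{-1}\zeta_K(s)L(\chi_\Delta,s)$; so I would simply quote Theorem~\ref{thm:counting-formula} here rather than redo the local analysis.

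The third equality in \eqref{eq:order-zeta-function}, namely $\zeta_K(s)L(\chi_\Delta,s)=\zeta_K(2s)\zeta(\Delta,s)$, is precisely \eqref{eq:the-identity} of Theorem~\ref{thm:counting-formula}, so it requires nothing new. Thus the real content is the bijection together with the correct reading of the index $[\DL_{\id f_\Delta}:\id I_{\id a,b,\Delta}]=\N(\id a)$ and the norm bookkeeping that converts ``sum over $I_L(\Delta)$'' into ``$\zeta_K(s)$ times sum over $\mathcal Q(\Delta)/\sim$''. I expect the main obstacle to be the injectivity/well-definedness of the map $(\id a,b)\mapsto\id I_{\id a,b,\Delta}$ modulo $I_K$: one must show that two such modules can be proportional over $K$ only in the stated way, which means ruling out ``hidden'' scalings, and here the key structural invariants are the $K$-part $\id I\cap K$ and the image of $\sqrt\Delta$ (equivalently $\omega_{\Delta,b}$) in the rank-one $\D$-module $\id I/(\id I\cap K)$, whose class must be respected by any $\units K$-multiplication. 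Everything else — multiplicativity, the reduction to prime powers, and the final zeta identities — is either routine or already available from Theorem~\ref{thm:counting-formula} and Proposition~\ref{prop:description-of-orders}.
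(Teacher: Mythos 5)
Your overall strategy is the paper's: surjectivity via Lemma~\ref{lem:key-lemma}, injectivity by direct comparison, and the zeta identities by index bookkeeping plus Theorem~\ref{thm:counting-formula}. But three steps as you have written them would not go through. First, in the injectivity you declare that ``an $I_K$-equivalence is multiplication by some $\lambda\in\units K$''; this is false, since $I_K$ is the monoid of \emph{all} fractional ideals of $\D$, so you must rule out $\id c\,\id I_{\id a',b',\Delta}=\id I_{\id a,b,\Delta}$ for an arbitrary fractional ideal $\id c$ (the paper does this by intersecting with $K$ to get $\id c\id a'=\id a$ and then comparing coefficients of $\omega_{\Delta,b}$ to force $\id c=\D$). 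Second, in the surjectivity your normalization is inverted: Lemma~\ref{lem:key-lemma} produces $\id A=\D+\id a^{-1}\omega_{\Delta',b'}$ with $\id a$ integral, and there is no reason for $\id a^{-1}$ to be integral; the correct move is $\id A\sim\id a\id A=\id a+\D\omega_{\Delta,b}=\id I_{\id a,b,\Delta}$. More importantly, you never verify that the resulting pair lies in $\mathcal Q(\Delta)$, i.e.\ that $\Delta\equiv b^2\bmod 4\id a$; this is exactly where the hypothesis $\DL_{\id f_\Delta}\id A=\id A$ enters (it forces $\omega_{\Delta,b}^2\in\id A$, hence $(b^2-\Delta)/4\in\id a$), and it is also where one must pass from $\omega_{\Delta',b'}$ to $\omega_{\Delta,b}$ --- your appeal to $\id f_{\Delta'}^{-1}\omega_{\Delta',b'}=\id f_\Delta^{-1}\omega_{\Delta,b}$ concerns the maximal order and does not apply to a general $\id a^{-1}$.

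Third, the norm bookkeeping is misassigned. Since $[\DL_{\id f_\Delta}:\id b\,\id I_{\id a,b,\Delta}]=\N(\id b)^2\N(\id a)$, the sum over the ideals in a fixed $I_K$-class contributes $\zeta_K(2s)$, not $\zeta_K(s)$, and the sum over $\mathcal Q(\Delta)/\mathord{\sim}$ contributes $\sum_{\id a}N_\Delta(\id a)\N(\id a)^{-s}=\zeta(\Delta,s)$ by the definition~\eqref{eq:N-zeta-definition}, not $L(\chi_\Delta,s)$. So the bijection yields the left-hand side of~\eqref{eq:order-zeta-function} as $\zeta_K(2s)\zeta(\Delta,s)$ directly, and only \emph{then} does Theorem~\ref{thm:counting-formula} convert this to $\zeta_K(s)L(\chi_\Delta,s)$; your asserted identity $\sum_{(\id a,b)/\sim}\N(\id a)^{-s}=L(\chi_\Delta,s)$ is false, and your factorization into $\zeta_K(s)$ times $L(\chi_\Delta,s)$ only accidentally has the right product. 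All of this is repairable along the lines you sketch, but as written the surjectivity and injectivity arguments have genuine holes and the analytic identity rests on two false intermediate claims.
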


\begin{proof}
  It is clear that $\id I_{\id a,b,\Delta}$ depends only on $b$
  modulo~$\id a$ so that the given application induces indeed a map
  $f:\mathcal Q(\Delta)/\mathord{\thicksim} \longrightarrow I_L(\Delta)/I_K$.

  The map $f$ is surjective: By Lemma~\ref{lem:key-lemma} every
  element $I_L(\Delta)/I_K$ is represented by an $\D$-lattice of the
  form $\id A:=\D + \id a^{-1}\omega_{\Delta',b'}$ with an integral
  ideal $\id a$, a discriminant $\Delta'$ and a solution $b'$ in~$\D$
  of $\Delta'\equiv {b'}^2\bmod 4$. Since $\Delta'$ and $\Delta$
  differ by a square in~$\units K$ we can assume (after possibly
  multiplying $\id A$ and $\omega_{\Delta',b'}$ by a suitable integer
  in $K$) that $\Delta'=a^2\Delta$ for some integer $a$ in~$\D$.  From
  $\DL_{\id f_{\Delta}}\id A = \id A$ we deduce (on writing, using
  Proposition~\ref{prop:description-of-orders},
  $\DL_{\id f_{\Delta}}=\D+\D\omega_{\Delta,b}$ with any
  $\Delta\equiv b^2\bmod 4$) that $\omega_{\Delta,b}\in\id A$. But
  $\omega_{\Delta,b}=\frac 1a \omega_{a^2\Delta,b'} + \frac
  {b-b'/a}2$,
  so that $\omega_{\Delta,b}\in\id A$ implies $a\mid \id a,b$ and
  $b\equiv b'/a\bmod 2$. Writing $\id a$ for $\id a/a$, replacing $b$
  by $b'/a$, we find $\id A=\id o+\id a^{-1}\omega_{\Delta,b}$. Again
  from $\DL_{\id f_{\Delta}}\id A = \id A$ we deduce
  $\omega_{\Delta,b}^2\omega \in \id A$, which finally implies
  $\frac {b^2-\Delta}4\in \D$.

  The map $f$ is injective: Every class in $I_L(\Delta)/I_K$ contains
  exactly one element of the form $\id A := \id I_{\id
    a,b,\Delta}$.
  Suppose $\id A':=\id I_{\id a',b',\Delta}$ is in the same class as
  $\id A$, say, $\id c\id A'=\id A$ for some fractional ideal $\id c$
  in~$K$. Then $\id c\id a'=\id c\id A'\cap K = \id A\cap K=\id a$,
  and hence
  $\id a +\id c\omega_{\Delta,b'}=\id a +\D\omega_{\Delta,b}$.  But
  then $a+c\omega_{\Delta,b'}=\omega_{\Delta,b}$ for suitable
  $a$~in$\id a$ and $c$ in~$\id c$, which in turn implies $c=1$,
  whence $\id c=\D$, and $b'\equiv b\bmod 2\id a$.

  For proving~\eqref{eq:order-zeta-function} we note that
  $\id b\mapsto \id b\id I_{\id a, b\Delta}$ defines a bijection
  between the set of integral ideals of~$K$ and the subset of
  $\DL_{\id f_\Delta}$-ideals in the class of $I_{\id a, b\Delta}$ in
  $I_L(\Delta)/I_K$. Moreover,
  $[\DL_{\id f_\Delta}:\id b \id I_{\id a,b,\Delta}]=\N(\id b)^2\n(\id
  a)$.
  The left hand side of~\eqref{eq:order-zeta-function} equals
  therefore
  \begin{equation*}
    \sum_{\id b}\sum_{(\id a,b)\in \mathcal{Q}(\Delta)/\equiv}
    \N(\id b)^{-2s}\N(\id a)^{-s}
    ,
  \end{equation*}
  and inserting~\eqref{eq:the-identity} we recognize the first of the
  claimed identities. The second one is merely a restatement
  of~\eqref{eq:the-identity} of Theorem~\ref{thm:counting-formula}.
\end{proof}

\subsection{Analytic properties of $L(\chi_{\Delta},s)$}


From its definition it is clear that the $L$-series $L(\chi_\Delta,s)$
converges absolutely for $\Re(s)>1$.

\begin{Theorem}
  \label{thm:analytic-properties}
  Let $\Delta$ be a discriminant in~$K$.  The function
  $L(\chi_\Delta,s)$ in~\eqref{eq:chi-Delta-as-L-series} has the
  following properties.
  \begin{enumerate}
  \item It can be analytically continued to $\C\setminus\{1\}$.
  \item It is an entire function if $\Delta$ is not a square in $K$.
  \item It has a simple pole at~$s=1$ with residue
    \begin{equation*}
      \kappa
      =
      \frac {2^{r_1}(2\pi)^{r_2}}{w_K|D_K|^{1/2}}h_KR_K
    \end{equation*}
    if $\Delta$ is a square in~$K$.
  \item It satisfies the functional equation
    \begin{equation*}
      L^*(\chi_\Delta,s):=\gamma(s)L(\chi_\Delta,s) =
      L^*(\chi_\Delta,1-s)
      ,
    \end{equation*}
    where
    \begin{equation*}
      \phantom{XXX}
      \gamma(s)
      =
      \N(\Delta \dif_K)^{\frac s2}
      \Gamma_\R(s)^{\frac {n+a}2}
      \Gamma_\R(1+s)^{\frac {n-a}2}
      =
      \N(\Delta \dif_K)^{\frac s2}
      \Gamma_\R( s)^{a}
      \Gamma_\C(s)^{\frac {n-a}2}
      .
    \end{equation*}
    Here $a=\sum_\sigma \sym{sign}(\sigma(\Delta))$ with $\sigma$
    running over all real embeddings of~$K$, and
    $\Gamma_\R(s)=\pi^{-s/2}\Gamma(s/2)$ and
    $\Gamma_\C(s)=2(2\pi)^{-s}\Gamma(s)$.
  \end{enumerate}
  Here $n$, $D_K$, $\dif_K$, $h_K$, $R_K$, $w_K$ denote the degree,
  discriminant, different, class number, regulator and number of roots
  of unity of $K$.
\end{Theorem}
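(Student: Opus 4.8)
The plan is to base the whole proof on the factorization~\eqref{eq:chi-Delta-as-L-series},
\[
  L(\chi_\Delta,s)=L\!\left(\uleg\Delta*,s\right)\,P(s),\qquad
  P(s):=\sum_{\id t\mid\id f_\Delta}\frac{\mu(\id t)\,\uleg\Delta{\id t}}{\N(\id t)^{s}}\,\sigma_{1-2s}(\id f_\Delta/\id t),
\]
in which $P(s)$ is a finite Dirichlet polynomial, hence an entire function, and $L(\uleg\Delta*,s)$ is the Hecke $L$-function of the primitive Gr\"o\ss encharakter $\uleg\Delta*$, of conductor $\id D:=\Delta/\id f_\Delta^{2}$ and of infinity type given by the set $M$ of real embeddings $\sigma$ with $\sigma(\Delta)<0$ (Theorem~\ref{thm:groessencharakter-property}). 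If $\Delta$ is a square in $K$ then $K(\sqrt\Delta)=K$, $M=\emptyset$, $\id D=(1)$, so $\uleg\Delta*$ is the trivial character and $L(\uleg\Delta*,s)=\zeta_K(s)$; otherwise $\uleg\Delta*$ is a nontrivial primitive Gr\"o\ss encharakter (there are infinitely many inert primes by Theorem~\ref{thm:decomposition-law}), so $L(\uleg\Delta*,s)$ is entire by Hecke's theorem. Parts~(1) and~(2) then drop out: one multiplies the (entire, resp.\ $\zeta_K$) function $L(\uleg\Delta*,s)$ by the entire factor $P(s)$.

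For part~(3) I would take $\Delta=c^{2}$ with $c\in\D$; then $\id f_\Delta=c\D$ is principal (the congruence in the definition of $\id f_\Delta$ is automatic, and $c\D$ is the largest ideal whose square divides $c^{2}\D$), and $L(\uleg\Delta*,s)=\zeta_K(s)$ has a simple pole at $s=1$ with residue $\kappa$ by the analytic class number formula. It remains to see $P(1)=1$. Expanding $\sigma_{1-2s}$ one gets the threefold sum $P(s)=\sum_{\id t\id d\id e=\id f_\Delta}\mu(\id t)\,\uleg\Delta{\id t}\,\N(\id t)^{-s}\N(\id d)^{1-2s}$; since $\uleg\Delta*$ is trivial here, setting $s=1$ and grouping the terms by $\id n:=\id t\id d$ gives $P(1)=\sum_{\id n\mid\id f_\Delta}\N(\id n)^{-1}\sum_{\id t\mid\id n}\mu(\id t)=1$, the inner sum being $0$ unless $\id n=(1)$. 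Hence $\operatorname{Res}_{s=1}L(\chi_\Delta,s)=P(1)\kappa=\kappa$.

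For part~(4) I would feed the functional equation of $L(\uleg\Delta*,s)$ into the factorization. Let $p=\#\{\sigma\ \text{real}:\sigma(\Delta)>0\}$ and $q=\#M$, so $r_{1}=p+q$, $r_{1}+2r_{2}=n$, and $a=p-q$. Tracking the behaviour of the infinite places of $K$ in $L=K(\sqrt\Delta)$ (a real $\sigma$ with $\sigma(\Delta)>0$ gives two real places of $L$, one with $\sigma(\Delta)<0$ gives one complex place, and complex places give two complex places), together with the conductor--discriminant relation $|D_{L}|=|D_{K}|^{2}\N(\id D)$ and the Legendre duplication identity $\Gamma_\R(s)\Gamma_\R(s+1)=\Gamma_\C(s)$, one checks that the completed Dedekind zeta functions factor as $\Lambda_{L}(s)=\Lambda_{K}(s)\Lambda(s)$, where
\[
  \Lambda(s):=\N(\dif_K\id D)^{s/2}\,\Gamma_\R(s)^{p}\,\Gamma_\R(s+1)^{q}\,\Gamma_\C(s)^{r_{2}}\,L\!\left(\uleg\Delta*,s\right).
\]
Since $\Lambda_{L}$ and $\Lambda_{K}$ obey the signless functional equations of Dedekind zeta functions, $\Lambda(s)=\Lambda(1-s)$ --- in particular the root number of $\uleg\Delta*$ is $+1$. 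Using $\N(\id D)\N(\id f_\Delta)^{2}=\N(\Delta)$ and once more the duplication identity, the gamma factor $\gamma(s)$ of the theorem equals $\N(\id f_\Delta)^{s}$ times the archimedean-and-conductor part of $\Lambda$, so $\gamma(s)L(\chi_\Delta,s)=\N(\id f_\Delta)^{s}\Lambda(s)P(s)=\N(\id f_\Delta)^{s}\Lambda(1-s)P(s)$. Matching this against $\gamma(1-s)L(\chi_\Delta,1-s)=\N(\id f_\Delta)^{1-s}\Lambda(1-s)P(1-s)$, the functional equation $L^{*}(\chi_\Delta,s)=L^{*}(\chi_\Delta,1-s)$ reduces to the elementary identity $P(s)=\N(\id f_\Delta)^{1-2s}P(1-s)$. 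That identity I would prove straight from the threefold sum: after the substitution $s\mapsto 1-s$ the general term, multiplied by $\N(\id f_\Delta)^{1-2s}=\N(\id t\id d\id e)^{1-2s}$, becomes $\mu(\id t)\uleg\Delta{\id t}\N(\id t)^{-s}\N(\id e)^{1-2s}$, which is exactly the term of $P(s)$ attached to the factorization with $\id d$ and $\id e$ interchanged; summing over all factorizations $\id t\id d\id e=\id f_\Delta$ therefore recovers $P(s)$.

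The step I expect to be the real work is the archimedean and conductor bookkeeping in part~(4): one must correctly relate the gamma factor $\gamma(s)$ and the constant $\N(\Delta\dif_K)$ in the functional equation of $L(\chi_\Delta,s)$ to the data of the Hecke $L$-function $L(\uleg\Delta*,s)$, which means carefully tracking the splitting of the infinite places of $K$ in $K(\sqrt\Delta)$ through the signs $\sym{sign}\,\sigma(\Delta)$, invoking the Legendre duplication formula, and pinning down the root number as $+1$. Everything else is formal manipulation of~\eqref{eq:chi-Delta-as-L-series} and of the explicit finite sum $P(s)$.
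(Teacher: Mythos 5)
Your proof is correct and follows essentially the same route as the paper: both factor $L(\chi_\Delta,s)$ as the Hecke $L$-function $L\bigl(\uleg\Delta*,s\bigr)=\zeta_{K(\sqrt\Delta)}(s)/\zeta_K(s)$ times the finite Dirichlet polynomial from~\eqref{eq:chi-Delta-as-L-series}, evaluate that polynomial at $s=1$ for the residue, and obtain $\gamma(s)$ by the same bookkeeping of infinite places, the relation $D_L=D_K^2\N(D_{L/K})$ with $D_{L/K}=\Delta/\id f_\Delta^2$, and the duplication formula. Your explicit verification of $P(s)=\N(\id f_\Delta)^{1-2s}P(1-s)$ supplies a detail the paper only asserts, but this does not change the approach.
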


\begin{proof}
  We have $L(\uleg {\Delta}*,s)=\zeta_L(s)/\zeta_K(s)$, where
  $L=K(\sqrt\Delta)$ if $\Delta$ is not a square in~$K$, and where
  $\zeta_L(s)=\zeta_K(s)^2$ otherwise. Using this we can write
  \begin{equation}
    \label{eq:Lchi-identity}
    L(\chi_\Delta,s)=\frac {\zeta_L(s)}{\zeta_K(s)}F(s),
  \end{equation}
  where $F(s)$ is the finite Euler product on the right
  of~~\eqref{eq:chi-Delta-as-L-series}.

  It is a classical fact that the Dedekind zeta function of a number
  field can be analytically continued to $\C\setminus\{1\}$, has a
  simple pole at~$s=1$ and satisfies a functional equation
  $s\mapsto 1-s$. For $\Delta$ not a square, $\zeta_L(s)/\zeta_K(s)$
  is an entire function since $L/K$ is Galois (cf.~\cite[]{}). From
  these facts and the preceding identity for $L(\chi_\Delta,s)$ the
  properties~(1) to (4) become obvious. For the formula for the
  residue we note $F(1)=1$, so that the and the residue of
  $L(\chi_\Delta,s)$ at $s=1$ equals the residue $\kappa$ of
  $\zeta_K(s)$ at $s=1$, which by a well-known formula (see
  e.g.~\cite[Chap.~VII, Cor.~(5.11)]{Neukirch}) is given by
  $\rho=\frac {2^{r_1}(2\pi)^{r_2}}{w_K|D_K|^{1/2}}h_KR_K$.

  For the formula for $\gamma(s)$ we note that
  $\N(\id f_{\Delta})^sF(s)$ is invariant under $s\mapsto 1-s$, that
  $\zeta_K^*(s):=|D_K|^{s/2}\Gamma_\R^{r_1}(s)\Gamma_\C^{r_2}(s)=\zeta(1-s)$,
  where $r_1$ and $r_2$ are the real and complex places of $K$, and
  that a similar formula holds for $\zeta_L(s)$ if $\Delta$ is not a
  square. Therefore~(4) holds with
  \begin{multline*}
    \gamma(s)
    =
    \N(\id f_{\Delta})^{s}
    \frac {|D_L|^{s/2}}{|D_K|^{s/2}}\Gamma_\R(s)^{R_1-r_1}\Gamma_\C(s)^{R_2-r_2}
    \\
    =
    \N(\Delta\dif_K)^{s/2}
    \Gamma_\R(s)^{R_1+R_2-r_1-r_2}\Gamma_\R(s+1)^{R_2-r_2}
    ,
  \end{multline*}
  where $R_1$, $r_1$ are the real places and and $R_2$, $r_2$ are the
  complex places of $L$ and~$K$. For the second identity we used
  $D_L=D_K^2\N(D_{L/K})$ (see e.g.~\cite[Ch.~III,
  Cor.~(2.10)]{Neukirch}), $D_{L/K}=\Delta/\id f_{\Delta}^2$
  (Theorem~\ref{eq:discriminant-formula}), and the duplication formula
  $L_\C(s)=L_\R(s)L_\R(s+1)$. (These identities are literally true if
  $\Delta$ is not a square in~$K$, but hold also true with the
  convention $D_L=D_K^2$ and $R_j=2r_j$ for squares $\Delta$ in
  $K$). If $\Delta$ is not a square then every complex place of $K$
  splits into two complex places of $L$, and every real embedding
  $\sigma$ can be continued to two real places of $L$ if
  $\sigma(\Delta)>0$, and can be continued to a pair of complex places
  $\tau$ and $\overline\tau$ in $L$ if
  $\sigma(\Delta)<0$. Accordingly, if we use $r_1^+$ and $r_1^-$ for
  the real places $\sigma$ of $K$ with $\sigma(\Delta)>0$
  $\sigma(\Delta)<0$, respectively, then $R_1=2r_1^+$ and
  $R_2=r_1^-+2r_2$. In other words, $R_2-r_2=\frac {n-a}2$ and
  $R_1+R_2-r_1-r_2=\frac {n+a}2$, and we recognize the claimed formula
  for~$\gamma(s)$.
\end{proof}

\subsection{Special values}

There is another situation where relative quadratic extension continue
smoothly properties of quadratic number fields. This is the case of a
totally real field $K$ and a totally imaginary extension.

\begin{Theorem}
  \label{thm:special values}
  Let $K$ be a totally real number field of degree $n$ and $\Delta$ a totally negative discriminant
  of~$K$. Then
  \begin{equation*}
    H(\Delta) = H(\Delta,K) := L(\chi_\Delta,0)
  \end{equation*}
  (with $\chi_\Delta$ as defined in~\eqref{eq:chi-Delta}) is a
  rational number. More precisely, one has
  \begin{equation}
    \label{eq:HD-formula}
    H(\Delta)
    = \frac {2^{n-1}}{\gamma_L}
    \frac {h_{L}/h_K}{w_{L}/2}
    \sum_{\id f\mid\id f_{\Delta}} \N(\id f)
    \prod_{\id p\mid \id f}
    \left(1-\frac {\uleg \Delta {\id p}}{\N(\id p)}\right)
    .
  \end{equation}
  Here $h_{L}$ and $h_K$ denote the class numbers of
  $L=K(\sqrt\Delta)$ and~$K$, and we use
  $\gamma_L=[\units {\DL_L}:W_{L}\units{\D}]$ with
  $\units {\DL_L}$ and $W_{L}$ denoting the group of
  units and roots of unity in $L$, and
  $w_{L}=\card {W_{L}}$.
\end{Theorem}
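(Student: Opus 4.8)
The plan is to evaluate at $s=0$ the factorization $L(\chi_\Delta,s)=\bigl(\zeta_L(s)/\zeta_K(s)\bigr)F(s)$ (see~\eqref{eq:Lchi-identity}, or combine Theorem~\ref{thm:decomposition-law} with~\eqref{eq:chi-Delta-as-L-series}), where $L=K(\sqrt\Delta)$ and $F$ is the finite Euler product on the right of~\eqref{eq:chi-Delta-as-L-series}. The first observation is that, $K$ being totally real and $\Delta$ totally negative, $\Delta$ is not a square in $K$, so $L$ really is a quadratic extension; moreover $L$ is totally imaginary, i.e.\ $K$ has $n$ real and no complex places while $L$ has $n$ complex and no real places. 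Hence $\units{\D}$ and $\units{\DL_L}$ both have rank $n-1$, and $\zeta_K(s)$ and $\zeta_L(s)$ both vanish at $s=0$ to the common order $r_1+r_2-1=n-1$.

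For the archimedean part I would use the analytic class number formula at $s=0$: for any number field $F$ one has $\lim_{s\to0}s^{-(r_1+r_2-1)}\zeta_F(s)=-h_FR_F/w_F$. Since $\zeta_L/\zeta_K=L\bigl(\uleg\Delta*,s\bigr)$ is entire (as recalled in the proof of Theorem~\ref{thm:analytic-properties}) and the two zeta functions vanish to the same order at $s=0$, its value there is the quotient of the leading coefficients, so, using $w_K=2$,
\begin{equation*}
  \frac{\zeta_L(0)}{\zeta_K(0)}=\frac{h_LR_L/w_L}{h_KR_K/w_K}=\frac{2\,h_LR_L}{w_L\,h_KR_K}.
\end{equation*}
Next I would remove the regulators by the classical CM regulator identity $R_L=\bigl(2^{n-1}/\gamma_L\bigr)R_K$. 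To prove it, take fundamental units $\varepsilon_1,\dots,\varepsilon_{n-1}$ of $K$; they stay multiplicatively independent in $L$ (a relation $\prod\varepsilon_i^{a_i}\in W_L$ would force the product into $K\cap W_L=\{\pm1\}$), so together with $W_L$ they generate a subgroup of index $\gamma_L=[\units{\DL_L}:W_L\units{\D}]$ of $\units{\DL_L}$; and since each real place $v$ of $K$ extends uniquely to a complex place $w$ of $L$ with $\log\|u\|_w=2\log\|u\|_v$ for $u\in\units{\D}$, the regulator of that subgroup, computed in $L$, equals $2^{n-1}R_K$. Dividing by the index gives the identity, and substituting turns the displayed ratio into $2^nh_L/(\gamma_Lw_Lh_K)$.

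For the finite factor, setting $s=0$ in $F$ makes $\sigma_{1-2s}$ become $\sigma_1$ and $\N(\id t)^{-s}$ become $1$, so $F(0)=\sum_{\id t\mid\id f_\Delta}\mu(\id t)\,\uleg\Delta{\id t}\,\sigma_1(\id f_\Delta/\id t)$. Both this quantity and the sum $\sum_{\id f\mid\id f_\Delta}\N(\id f)\prod_{\id p\mid\id f}\bigl(1-\uleg\Delta{\id p}/\N(\id p)\bigr)$ occurring in~\eqref{eq:HD-formula} are multiplicative in $\id f_\Delta$, so it suffices to compare them on prime powers $\id p^k$, where each collapses to $\sum_{j=0}^{k}\N(\id p)^j-\uleg\Delta{\id p}\sum_{j=0}^{k-1}\N(\id p)^j$. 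Combining the three steps yields
\begin{equation*}
  H(\Delta)=\frac{\zeta_L(0)}{\zeta_K(0)}\,F(0)=\frac{2^nh_L}{\gamma_Lw_Lh_K}\sum_{\id f\mid\id f_\Delta}\N(\id f)\prod_{\id p\mid\id f}\Bigl(1-\frac{\uleg\Delta{\id p}}{\N(\id p)}\Bigr),
\end{equation*}
which is~\eqref{eq:HD-formula} after writing the constant as $\tfrac{2^{n-1}}{\gamma_L}\cdot\tfrac{h_L/h_K}{w_L/2}$; rationality is then immediate, since every factor on the right is a rational integer or one of the values $\pm1$ of $\uleg\Delta{\id p}$.

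The only step that is not essentially formal is the regulator identity, together with getting all the powers of $2$ right (one per archimedean place of $K$, of which there are $n$, but only $n-1$ enter the regulator determinant) and the factor $w_K=2$; everything else amounts to inserting standard special-value formulas for Dedekind zeta functions and a short multiplicativity check on prime powers.
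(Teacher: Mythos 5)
Your proposal is correct and follows essentially the same route as the paper: evaluate $L(\chi_\Delta,s)=\bigl(\zeta_L(s)/\zeta_K(s)\bigr)F(s)$ at $s=0$ via the leading coefficients $h R/w$ of the two Dedekind zeta functions (whose common vanishing order $n-1$ makes the signs cancel), convert the regulator ratio into $2^{n-1}/\gamma_L$ using $[\units{\DL_L}:\units{\D}]=\gamma_L\,w_L/2$, and identify $F(0)$ with the divisor sum over $\id f\mid\id f_\Delta$. The only difference is one of detail: you actually prove the regulator identity $R_L=(2^{n-1}/\gamma_L)R_K$ and verify the prime-power identity for $F(0)$, both of which the paper merely asserts.
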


\begin{Remark}
  The numbers $H(\Delta,\Q)$ are the Hurwitz class numbers, i.e.~the
  number of $\SL [\Z]$-equivalence classes of binary positive definite
  integral quadratic forms of discriminant $\Delta$, where forms
  equivalent to a multiple of $x^2+y^2$ or $x^2+xu+y^2$ are counted
  with weight $\frac 12$ and $\frac13$, respectively. For $K=\Q$ the
  identity~\eqref{eq:HD-formula} is classical.

  For the computation of $H(\Delta)$ the following rearks are useful:
  \begin{enumerate}
  \item Since $K$ is totally real and $L:=K(\sqrt \Delta)$ totally
    imaginary, the rank of the unit groups of $K$ and $L$ are equal
    and in fact equal to $n-1$ by Dirichlet's unit theorem. In
    particular, $\gamma_L$ is well-defined.
  \item As observed in~\cite[\S2]{Remak} one has $\gamma_L\le 2$.
    Indeed, $W_L\units {\D}$ equals the kernel of the homomorphism
    $\xi:\units{\DL_L} \rightarrow W_L/W_L^2\cong\{\pm 1\}$,
    $u\mapsto (\overline u/u) W_L^2$.
  \item $W_{L}\not=\{\pm 1\}$ can occur only for finitely many
    $\Delta$ modulo~${\units K}^2$. Indeed, if
    $W_{L}=\langle z\rangle$ with a primitive $k$th root of unity $z$,
    then $\varphi(k)\mid 2n$ and, for $k>2$, one has $L=K(z)$. Note
    that $k$ must then also be even (since $W_L$ contains~$-1$), and
    $K$ must contain the totally real subfield $Q(z+\overline z)$ of
    the $k$th cyclotomic field.
  \item One has $\units {\DL_L} = \units {\D}$ for all but finitely
    many $\Delta$ modulo~${\units K}^2$: If
    $\units {\DL_L} \not= \units {\D}$ and $W_L=\{\pm 1\}$ then
    $\gamma_L=[\units {\DL_L}:\units {\D}]=2$, and hence $L=K(z)$ with
    a non-real $z$ in~$\units {\DL_L}$. But then $\overline z=-z$,
    i.e.~$-z^2$ is a totally positive unit in~$K$. Thus $L$ belongs to
    the (finite) image of the application which associates to each
    class $u{\units\D}^2$ in $\units\D/{\units\D}^2$ containing only
    totally positive units the field $L=K(\sqrt {-u})$.
  \item Assume that every totally positive unit in $K$ is
    in~${\units \D}^2$. Then one has $\units {\DL_L}\not = \units {\D}$
    if and only if $W_L\not=\{\pm1\}$.  Namely, if $\units {\D}$ is a
    proper subgroup of $\units {\DL_L}$ then there exists a unit $z$
    in~$L$ and a $u\not=1$ in $W_L$ such that $\overline z = zu$. Then
    either $u\not=-1$ (and hence $w_L>2$) or $z=yi$ for a unit $y$
    in~$\units {\D}$ (and hence $i\in W_L$).  In particular,
    $\gamma_L = 2$ at most if $w_L>2$.
  \item The quotient $h_{L}/h_K$ ($L=K(\sqrt\Delta)$) is an
    integer. This follows from the fact that the norm map from the
    class group of~$L$ to the class group of~$K$ is surjective. (This
    follows by translating this map via class field theory to a map
    $\sym{Gal}(\widetilde L/L)\rightarrow \sym{Gal}(\widetilde K/K)$,
    where $\widetilde L\supseteq \widetilde K$ are the Hilbert class
    fields of $L$ and $K$, and noticing that this map is essentially
    the restriction map; for the latter one uses that the infinite
    places of~$K$ ramify in the totally imaginary extension~$L$, so
    that $\widetilde K\cap L = K$.) Therefore $h_{L}/h_K$ equals the
    cardinality of the kernel of the norm map, i.e.~the cardinality of
    the relative class group of $L/K$.
  \end{enumerate}
\end{Remark}

\begin{proof}[Proof of Theorem~\ref{thm:special values}]
  We note that the sum on the right hand side of the claimed formula
  for $H(\Delta)$ equals $F(0)$ with $F$ as
  in~\eqref{eq:Lchi-identity}. Moreover,
  $\lim_{s\to 0}s^{-r}\zeta_K(s)=h_KR_K/w_k$ and, setting
  $L=K(\sqrt \Delta)$, $\lim_{s\to 0}s^{-R}\zeta_L(s)=h_LR_L/w_L$,
  where $r=r_1+r_2-1$, $R=R_1+R_2-1$, and $R_L$, $R_K$ and $w_L$,
  $w_K$ denote the regulators and the number of roots of unities in
  $K$ and $L$ (see~\cite[???]{Neukirch}). Since by assumption $K$ is
  totally real and $L$ is a totally complex extension of~$K$ we have
  $r=R$ and $R_Lw_K/R_Kw_L=2^{n-1}/[\units{\DL_L}:\units {\D}]$.  But
  $[\units{\DL_L}:\units {\D}]=\gamma_L\cdot [W_L:W_K]$ and
  $W_K=\{\pm 1\}$.  The formula for~$H(\Delta)$ is now obvious.
\end{proof}

\section{Appendix: Tables}
\label{sec:tables}

In this appendix the reader finds tables for the numbers $H(\Delta,K)$
of Theorem~\ref{thm:special values} for the three fields
$\Q(\sqrt 5)$, $\Q(\sqrt {10})$ and $\Q[x]/(x^3 - x^2 - 9x + 10)$. The
field $\Q(\sqrt 5)$ has class number~$1$, whereas the other two fields
are the fields having smallest discriminant among all totally real
fields of degree $2$ and $3$ whose class number is greater than~$1$;
in fact the class number is $2$ in both cases.

A fourth table lists for certain fields $K$ all discriminant classes
$\Delta$ in $K$ such that $K(\sqrt \Delta)$ is unramified at all
finite places of~$K$. By Theorem~\ref{thm:discriminant-formula} the
latter is equivalent to $\Delta/\id f_\Delta^2=1$.  But if
$\Delta/\id f_\Delta^2=1$, then $\uleg \Delta*$ has conductor~$1$ and
is trivial on all principal ideals generated by a totally positive
element (see Theorem~\ref{thm:groessencharakter-property}), so induces
a character of the narrow class group $\Cl^+(K)$ of~$K$ with
$\psi^2=1$.  Class field theory shows that the application
$\Delta\mapsto \uleg \Delta*$ induces in fact a bijection of the (at
finite primes) unramified quadratic extensions of~$K$ 'and the group
$\psi$ of characters of $\Cl^+(K)$ with $\psi^2=1$ (for this recall
from~\S\ref{sec:proof-groessencharakter-property} that $\uleg \Delta*$
equals the Artin reciprocity map of $K(\sqrt \Delta)/K$).  In other
words, the number of (at finite places) unramified extensions
$K(\sqrt \Delta)/K$ is in one to one correspondence with the group
$\Cl^+(K)/\Cl^+(K)^2$ of genus classes of~$K$, whence the number of
such extensions equals the cardinality of the subgroup $\Cl^+(K)[2]$
of elements $C$ in $\Cl^+(K)$ such that $C^2=1$, or, equivalently, the
number of even elementary divisors of the narrow class group. An
unramified extension $K(\sqrt \Delta)/K$ is unramified at the infinite
places too (i.e.~every real place of $K$ splits into two real places
of the extension) if $\Delta$ is totally positive, which, according to
Theorem~\ref{thm:groessencharakter-property}, means that
$\uleg \Delta*$ is trivial on the narrow ideal classes of principal
ideals, i.e.~factors through a character of the class
group~$\Cl(K)$. The number of discriminant classes with totally
positive $\Delta$ such that $\Delta=\id f_\Delta^2$ equals
therefore~$\card{ \Cl(K)[2]}$, i.e.~the number of even elementary
divisors of $\Cl(K)$.

For the fields $K$ of Table~\ref{tab:unit-dicriminants} we chose all
number fields of degree $\le 5$ having smallest discriminant among all
fields with $\card {\Cl^+(K)[2]}\in\{2,4,8\}$. We used the Bordeaux
tables~\cite{nftables} for finding these fields, and for the
computations of this and the other tables we used~\cite{sagemath}. We
did not find any such $K$ in the range of the Bordeaux tables of
degree $5$ and not totally real, or of degree $6$ or $7$. All
computations were done using~\cite{sagemath}.

For the computation of the numbers $H\big(\Delta,K\big)$ for
$K=\Q(\sqrt {5})$ in Table~\ref{tab:x2-5} we note the following. Since
the class number of~$K$ is one every discriminant in~$K$ decomposes as
$\Delta_0a^2$, where $\Delta_0$ is a fundamental discriminant and $a$
an integer in~$K$. The field $K$ is the totally real subfield of the
fifth cyclotomic field $K(\sqrt \Delta_5)$ where
$\Delta_5={\frac {\sqrt 5 -5}2}$.  Therefore, for a given totally
negative fundamental discriminant~$\Delta$, the field
$L=K(\sqrt\Delta)$ contains roots of unity different from~$\pm 1$
exactly if $\Delta\in\{\Delta_5, -3, \Delta_4={2\sqrt 5-6}\}$ up to
multiplication by a square of a unit in~$K$. Since the fundamental
unit of~$K$ has norm~$-1$ every totally positive unit in~$K$ is a
square in~$K$. Accordingly, the group of units of $L=K(\sqrt\Delta)$
and of~$K$ differ at most if $\Delta$ is one of the described three
kinds of discriminants.(see Remark~(5) after Theorem~\ref{thm:special
  values}). For these $\Delta$ one has $\gamma_L=1$.

Table~\ref{tab:x2-10} lists $H\big(\Delta,K\big)$ for
$K=\Q(\sqrt {10})$. This is the totally real quadratic field with
smallest discriminant having class number greater~$1$, in fact, equal
to~$2$. The field $K$ is not the totally real subfield of any
cyclotomic field. Therefore, for a given totally negative $\Delta$,
the field $L=K(\sqrt\Delta)$ contains roots of unity different
from~$\pm 1$ exactly if $L=\Q(\sqrt {10},\sqrt {-3})$ or
$L=\Q(\sqrt {10},\sqrt {-4})$, i.e.~if and only if
$\Delta\in\{-3,-4\}$ (up to multiplication by a square in~$K$). Since
every totally positive unit in~$K$ is a square in~$K$, the group of
units of~$L$ and of~$K$ differ at most if $\Delta\in -3{\units K}^2$
or $\Delta\in -4{\units K}^2$ (see Remark~(5) after
Theorem~\ref{thm:special values}). For these $\Delta$ one has
$\gamma_L=1$.

Finally, Table~\ref{tab:x3-} lists $H\big(\Delta,K\big)$ for
$K=\Q[x]/(x^3-x^2-9x+10)$. This is the totally real cubic field with
smallest discriminant having class number greater than $1$. The
discriminant is $D_K=19 \cdot 103$ and the class number is
$h_K=2$. For a given totally negative $\Delta$, the field
$L=K(\sqrt\Delta)$ contains roots of unity different from~$\pm 1$
exactly if and only if $\Delta\in\{-3,-4\}$ (up to multiplication by a
square in~$K$) as follows from Remark~(3) after
Theorem~\ref{thm:special values} and the fact that $K$ itself is not
the subfield of a cyclotomic field (e.g.~since $K/\Q$ is not
galois). We checked that in both cases $\gamma_L=1$.

If $L=K(\sqrt\Delta)$ has only $\pm1$ as roots of unity, then
$\gamma_L\not\not =1$ (in fact, $\gamma_L\not\not =2$) if and only if
$L=K(\sqrt u)$ for a totally negative unit $u$ in $K$ (see Remark~(4)
after Theorem~\ref{thm:special values}). But modulo~${\units\D}^2$ the
field $K$ contains only one totally negative unit, which is not in
$-1\cdot {\units\D}^*$, namely $a-3$.

\begin{table}[th]\begin{adjustwidth}{-.5cm}{-.5cm}
  \centering
  \caption{The numbers $H\big(\Delta,\Q(\sqrt {5})\big)$ for all
    totally negative discriminants~$\Delta$ in~$\Q(\sqrt {5})$
    modulo~${\units \D}^2$ with $\N(\Delta) \le 500$.}
  \renewcommand {\frac}[2] {\sfrac {#1}{#2}}
  \footnotesize
  \newcolumntype{R}{>{$}r<{$}}

\begin{tabular}{@{} *{4}{R} | *{4}{R} @{}}\boldsymbol{\N(\Delta)}&\boldsymbol{\Delta}&\boldsymbol{\id f_{\Delta}}&\boldsymbol{H(\Delta)}&\boldsymbol{\N(\Delta)}&\boldsymbol{\Delta}&\boldsymbol{\id f_{\Delta}}&\boldsymbol{H(\Delta)}\\ \midrule 5&-\frac{1}{2} \sqrt{5} - \frac{5}{2}&\left(1\right)&\frac{2}{5}&261&-\frac{3}{2} \sqrt{5} - \frac{33}{2}&\left(1\right)&4\\ 9&-3&\left(1\right)&\frac{2}{3}&269&-2 \sqrt{5} - 17&\left(1\right)&2\\ 16&-4&\left(1\right)&1&269&-\frac{11}{2} \sqrt{5} - \frac{41}{2}&\left(1\right)&2\\ 41&\frac{1}{2} \sqrt{5} - \frac{13}{2}&\left(1\right)&2&281&-\frac{7}{2} \sqrt{5} - \frac{37}{2}&\left(1\right)&6\\ 41&-\frac{1}{2} \sqrt{5} - \frac{13}{2}&\left(1\right)&2&281&-4 \sqrt{5} - 19&\left(1\right)&6\\ 49&-7&\left(1\right)&2&304&-6 \sqrt{5} - 22&\left(1\right)&4\\ 61&-2 \sqrt{5} - 9&\left(1\right)&2&304&-2 \sqrt{5} - 18&\left(1\right)&4\\ 61&-\frac{3}{2} \sqrt{5} - \frac{17}{2}&\left(1\right)&2&320&-4 \sqrt{5} - 20&\left(1\right)&4\\ 64&-8&\left(1\right)&2&341&\frac{1}{2} \sqrt{5} - \frac{37}{2}&\left(1\right)&4\\ 80&-2 \sqrt{5} - 10&\left(2\right)&\frac{12}{5}&341&-\frac{1}{2} \sqrt{5} - \frac{37}{2}&\left(1\right)&4\\ 109&\frac{1}{2} \sqrt{5} - \frac{21}{2}&\left(1\right)&2&361&-19&\left(1\right)&8\\ 109&-\frac{1}{2} \sqrt{5} - \frac{21}{2}&\left(1\right)&2&389&-\frac{13}{2} \sqrt{5} - \frac{49}{2}&\left(1\right)&2\\ 121&-11&\left(1\right)&4&389&-\frac{5}{2} \sqrt{5} - \frac{41}{2}&\left(1\right)&2\\ 125&-\frac{5}{2} \sqrt{5} - \frac{25}{2}&\left(-\sqrt{5}\right)&\frac{12}{5}&400&-20&\left(-\sqrt{5}\right)&5\\ 144&-12&\left(2\right)&\frac{8}{3}&405&-\frac{9}{2} \sqrt{5} - \frac{45}{2}&\left(3\right)&\frac{22}{5}\\ 145&-\frac{3}{2} \sqrt{5} - \frac{25}{2}&\left(1\right)&4&409&\frac{3}{2} \sqrt{5} - \frac{41}{2}&\left(1\right)&6\\ 145&-4 \sqrt{5} - 15&\left(1\right)&4&409&-\frac{3}{2} \sqrt{5} - \frac{41}{2}&\left(1\right)&6\\ 149&-2 \sqrt{5} - 13&\left(1\right)&2&421&-2 \sqrt{5} - 21&\left(1\right)&6\\ 149&-\frac{7}{2} \sqrt{5} - \frac{29}{2}&\left(1\right)&2&421&2 \sqrt{5} - 21&\left(1\right)&6\\ 176&-2 \sqrt{5} - 14&\left(1\right)&4&445&-\frac{7}{2} \sqrt{5} - \frac{45}{2}&\left(1\right)&4\\ 176&-4 \sqrt{5} - 16&\left(1\right)&4&445&-6 \sqrt{5} - 25&\left(1\right)&4\\ 209&-\frac{1}{2} \sqrt{5} - \frac{29}{2}&\left(1\right)&4&449&-\frac{11}{2} \sqrt{5} - \frac{49}{2}&\left(1\right)&6\\ 209&\frac{1}{2} \sqrt{5} - \frac{29}{2}&\left(1\right)&4&449&-4 \sqrt{5} - 23&\left(1\right)&6\\ 225&-15&\left(-\sqrt{5}\right)&\frac{14}{3}&464&2 \sqrt{5} - 22&\left(1\right)&4\\ 241&-\frac{9}{2} \sqrt{5} - \frac{37}{2}&\left(1\right)&6&464&-2 \sqrt{5} - 22&\left(1\right)&4\\ 241&-\frac{5}{2} \sqrt{5} - \frac{33}{2}&\left(1\right)&6&496&-6 \sqrt{5} - 26&\left(1\right)&8\\ 256&-16&\left(2\right)&5&496&-4 \sqrt{5} - 24&\left(1\right)&8\\ 261&\frac{3}{2} \sqrt{5} - \frac{33}{2}&\left(1\right)&4&&&&\\ \bottomrule \end{tabular}

  \label{tab:x2-5}
\end{adjustwidth}
\end{table}

\begin{table}[th]\begin{adjustwidth}{-.5cm}{-.5cm}  
  \centering
  \caption{The numbers $H\big(\Delta,\Q(\sqrt {10})\big)$ for all
    totally negative discriminants $\Delta$ in~$\Q(\sqrt {10})$
    modulo~${\units \D}^2$ with $\N(\Delta) \le 500$.}
  \renewcommand {\frac}[2] {\sfrac {#1}{#2}}
  \footnotesize
  \newcolumntype{R}{>{$}r<{$}}

\begin{tabular}{@{} *{4}{R} | *{4}{R} @{}} \toprule\boldsymbol{\N(\Delta)}&\boldsymbol{\Delta}&\boldsymbol{\id f_{\Delta}}&\boldsymbol{H(\Delta)}&\boldsymbol{\N(\Delta)}&\boldsymbol{\Delta}&\boldsymbol{\id f_{\Delta}}&\boldsymbol{H(\Delta)}\\ \midrule 4&-2&\left(1\right)&2&265&-36 \sqrt{10} - 115&\left(1\right)&4\\ 9&-3&\left(1\right)&\frac{4}{3}&265&-6 \sqrt{10} - 25&\left(1\right)&4\\ 16&-4&\left(2, \sqrt{10}\right)&3&321&-34 \sqrt{10} - 109&\left(1\right)&44\\ 36&-4 \sqrt{10} - 14&\left(3, \sqrt{10} + 2\right)&5&321&-8 \sqrt{10} - 31&\left(1\right)&44\\ 36&-6&\left(2, \sqrt{10}\right)&8&324&4 \sqrt{10} - 22&\left(\sqrt{10} - 1\right)&18\\ 36&-8 \sqrt{10} - 26&\left(3, \sqrt{10} + 1\right)&5&324&-12 \sqrt{10} - 42&\left(\sqrt{10} + 2\right)&\frac{64}{3}\\ 41&-2 \sqrt{10} - 9&\left(1\right)&4&324&-18&\left(3\right)&18\\ 41&2 \sqrt{10} - 9&\left(1\right)&4&324&-24 \sqrt{10} - 78&\left(-\sqrt{10} + 2\right)&\frac{64}{3}\\ 49&-7&\left(1\right)&4&324&-4 \sqrt{10} - 22&\left(\sqrt{10} + 1\right)&18\\ 64&-8&\left(2\right)&14&356&-16 \sqrt{10} - 54&\left(2, \sqrt{10}\right)&32\\ 65&-4 \sqrt{10} - 15&\left(1\right)&12&356&-20 \sqrt{10} - 66&\left(2, \sqrt{10}\right)&32\\ 65&-14 \sqrt{10} - 45&\left(1\right)&12&361&-19&\left(1\right)&4\\ 81&-6 \sqrt{10} - 21&\left(3, \sqrt{10} + 2\right)&16&369&-32 \sqrt{10} - 103&\left(3, \sqrt{10} + 2\right)&40\\ 81&-12 \sqrt{10} - 39&\left(3, \sqrt{10} + 1\right)&16&369&4 \sqrt{10} - 23&\left(3, \sqrt{10} + 1\right)&24\\ 89&-8 \sqrt{10} - 27&\left(1\right)&4&369&-4 \sqrt{10} - 23&\left(3, \sqrt{10} + 2\right)&24\\ 89&-10 \sqrt{10} - 33&\left(1\right)&4&369&-10 \sqrt{10} - 37&\left(3, \sqrt{10} + 1\right)&40\\ 96&-20 \sqrt{10} - 64&\left(1\right)&12&384&-40 \sqrt{10} - 128&\left(2, \sqrt{10}\right)&12\\ 96&-4 \sqrt{10} - 16&\left(1\right)&12&384&-8 \sqrt{10} - 32&\left(2, \sqrt{10}\right)&12\\ 100&-10&\left(5, \sqrt{10}\right)&5&400&-20&\left(-\sqrt{10}\right)&42\\ 121&-11&\left(1\right)&12&401&-2 \sqrt{10} - 21&\left(1\right)&36\\ 129&-2 \sqrt{10} - 13&\left(1\right)&20&401&2 \sqrt{10} - 21&\left(1\right)&36\\ 129&2 \sqrt{10} - 13&\left(1\right)&20&409&-30 \sqrt{10} - 97&\left(1\right)&4\\ 144&-8 \sqrt{10} - 28&\left(\sqrt{10} + 2\right)&18&409&-12 \sqrt{10} - 43&\left(1\right)&4\\ 144&-12&\left(2\right)&\frac{40}{3}&416&4 \sqrt{10} - 24&\left(1\right)&20\\ 144&-16 \sqrt{10} - 52&\left(-\sqrt{10} + 2\right)&18&416&-4 \sqrt{10} - 24&\left(1\right)&20\\ 160&-12 \sqrt{10} - 40&\left(1\right)&4&441&-14 \sqrt{10} - 49&\left(3, \sqrt{10} + 2\right)&24\\ 164&-4 \sqrt{10} - 18&\left(2, \sqrt{10}\right)&16&441&-28 \sqrt{10} - 91&\left(3, \sqrt{10} + 1\right)&24\\ 164&4 \sqrt{10} - 18&\left(2, \sqrt{10}\right)&16&465&-16 \sqrt{10} - 55&\left(1\right)&40\\ 196&-14&\left(2, \sqrt{10}\right)&32&465&-26 \sqrt{10} - 85&\left(1\right)&40\\ 201&-4 \sqrt{10} - 19&\left(1\right)&12&481&-18 \sqrt{10} - 61&\left(1\right)&20\\ 201&4 \sqrt{10} - 19&\left(1\right)&12&481&6 \sqrt{10} - 29&\left(1\right)&68\\ 225&-15&\left(5, \sqrt{10}\right)&20&481&-6 \sqrt{10} - 29&\left(1\right)&68\\ 240&-4 \sqrt{10} - 20&\left(1\right)&8&481&-24 \sqrt{10} - 79&\left(1\right)&20\\ 240&4 \sqrt{10} - 20&\left(1\right)&8&484&-22&\left(2, \sqrt{10}\right)&16\\ 249&-2 \sqrt{10} - 17&\left(1\right)&4&489&-20 \sqrt{10} - 67&\left(1\right)&12\\ 249&2 \sqrt{10} - 17&\left(1\right)&4&489&-22 \sqrt{10} - 73&\left(1\right)&12\\ 256&-16&\left(4, 2 \sqrt{10}\right)&15&496&-36 \sqrt{10} - 116&\left(1\right)&40\\ 260&-8 \sqrt{10} - 30&\left(2, \sqrt{10}\right)&16&496&-12 \sqrt{10} - 44&\left(1\right)&40\\ 260&-28 \sqrt{10} - 90&\left(2, \sqrt{10}\right)&16&&&&\\ \bottomrule \end{tabular}

  \label{tab:x2-10}
\end{adjustwidth}
\end{table}

\begin{table}[ht]\begin{adjustwidth}{-.5cm}{-.5cm}  
  \centering
  \caption{ The numbers $H(\Delta,K)$ for
    $K=\Q[x]/(x^3 - x^2 - 9x + 10)$ and for all totally negative
    discriminants $\Delta$ in~$K$ modulo~${\units \D}^2$ with
    $|\N(\Delta)| \le 500$.}
  \renewcommand {\frac}[2] {\sfrac {#1}{#2}}
  \footnotesize
  \newcolumntype{R}{>{$}r<{$}}

\begin{tabular}{@{} *{4}{R} | *{4}{R} @{}} \toprule\boldsymbol{\N(\Delta)}&\boldsymbol{\Delta}&\boldsymbol{\id f_{\Delta}}&\boldsymbol{H(\Delta)}&\boldsymbol{\N(\Delta)}&\boldsymbol{\Delta}&\boldsymbol{\id f_{\Delta}}&\boldsymbol{H(\Delta)}\\ \midrule -475&-3 a^{2} - 2 a + 5&\left(5, a\right)&40&-320&-a^{2} + 3 a - 5&\left(1\right)&40\\ -475&-4 a^{2} + 16 a - 15&\left(5, a\right)&56&-320&-5 a^{2} + 23 a - 25&\left(1\right)&24\\ -432&-3 a^{2} - 3 a + 6&\left(a - 2\right)&64&-304&-a^{2} + 7 a - 13&\left(2, a^{2} + a - 5\right)&48\\ -432&-3 a^{2} + 12 a - 12&\left(a - 2\right)&\frac{160}{3}&-304&-a^{2} - a - 1&\left(2, a^{2} + a - 5\right)&32\\ -412&-3 a^{2} + a + 2&\left(2, a\right)&80&-236&a^{2} + a - 14&\left(2, a\right)&48\\ -412&-7 a^{2} + 28 a - 24&\left(2, a\right)&32&-236&a^{2} + 8 a - 32&\left(2, a\right)&32\\ -404&-2 a^{2} - a + 2&\left(1\right)&32&-104&a^{2} - 12&\left(1\right)&8\\ -404&-3 a^{2} + 13 a - 14&\left(1\right)&32&-104&2 a^{2} + 3 a - 26&\left(1\right)&24\\ -359&-4 a^{2} + 15 a - 13&\left(1\right)&16&-95&a^{2} + 2 a - 15&\left(1\right)&40\\ -359&-4 a^{2} - 4 a + 9&\left(1\right)&80&-95&-a - 5&\left(1\right)&8\\ -352&-7 a^{2} + 29 a - 26&\left(2, a\right)&72&-76&-2 a^{2} + 7 a - 6&\left(2, a\right)&32\\ -352&-2 a^{2} + 3 a - 2&\left(2, a\right)&24&-76&-3 a^{2} - 4 a + 8&\left(2, a\right)&16\\ -347&-3 a^{2} + 2 a + 1&\left(1\right)&64&-64&4 a - 12&\left(1\right)&8\\ -347&-8 a^{2} + 32 a - 27&\left(1\right)&16&-64&-4&\left(1\right)&16\\ -343&7 a - 21&\left(1\right)&16&-27&3 a - 9&\left(1\right)&16\\ -343&-7&\left(1\right)&48&-27&-3&\left(1\right)&\frac{16}{3}\\ \bottomrule \end{tabular}

  \label{tab:x3-}
\end{adjustwidth}
\end{table}

\begin{table}[th]\begin{adjustwidth}{-2cm}{-2cm} 
  \centering
  \caption{The first fields $K=\Q[x]/(f)$ which contain ``unit discriminants~$\Delta\not=1$'', i.e.~dicriminants
    such that $\Delta/\id f_\Delta^2$ is trivial. The columns list the signature,
    discriminant, defining polynomial~$f$, elementary divisors of the
    class group and the narrow class group, and the unit discriminants (modulo squares) of $K$.
    Discriminants which are not totally positive are marked by a ${}^*$.}
  \footnotesize
  \newcolumntype{R}{>{$}r<{$}}
  \renewcommand{\arraystretch}{1.2} 

\begin{tabular}{@{} RRRRRr @{}} \toprule\text{\bf sign.}&\boldsymbol{D_K}&\boldsymbol{f}&\boldsymbol{\Cl(K)}&\boldsymbol{\Cl^+(K)}&\text{\bf discriminants}\\ \midrule  0,1&-15&x^{2} - x + 4&\left[2\right]&\left[2\right]&\parbox[t]{6cm}{\raggedright ${1}$, ${-a + 1}$}\\ &-84&x^{2} + 21&\left[2, 2\right]&\left[2, 2\right]&\parbox[t]{6cm}{\raggedright ${1}$, ${-1}$, ${3}$, ${-3}$}\\ &-420&x^{2} + 105&\left[2, 2, 2\right]&\left[2, 2, 2\right]&\parbox[t]{6cm}{\raggedright ${1}$, ${-1}$, ${3}$, ${-3}$, ${5}$, ${-5}$, ${7}$, ${-7}$}\\ \midrule 2,0&40&x^{2} - 10&\left[2\right]&\left[2\right]&\parbox[t]{6cm}{\raggedright ${1}$, ${2}$}\\ &60&x^{2} - 15&\left[2\right]&\left[2, 2\right]&\parbox[t]{6cm}{\raggedright ${1}$, ${-1}^*$, ${2 a + 8}$, ${-2 a - 8}^*$}\\ &780&x^{2} - 195&\left[2, 2\right]&\left[2, 2, 2\right]&\parbox[t]{6cm}{\raggedright ${1}$, ${-1}^*$, ${2 a + 28}$, ${-2 a - 28}^*$, ${3}$, ${-3}^*$, ${5}$, ${-5}^*$}\\ \midrule 1,1&-283&x^{3} + 4x - 1&\left[2\right]&\left[2\right]&\parbox[t]{6cm}{\raggedright ${1}$, ${a}$}\\ &-6571&x^{3} - x^{2} - 9x - 16&\left[2, 2\right]&\left[2, 2\right]&\parbox[t]{6cm}{\raggedright ${1}$, ${-3 a^{2} - 8 a + 84}$, ${a + 1}$, ${4 a^{2} - 8 a - 35}$}\\ &-300551&x^{3} - 49x - 169&\left[2, 2, 2\right]&\left[2, 2, 2\right]&\parbox[t]{6cm}{\raggedright ${1}$, ${a^{2} - 4 a - 36}$, ${a + 3}$, ${-a^{2} + a + 61}$, ${a + 7}$, ${3 a^{2} - 15 a - 83}$, ${a + 11}$, ${7 a^{2} - 31 a - 227}$}\\ \midrule 3,0&1957&x^{3} - x^{2} - 9x + 10&\left[2\right]&\left[4\right]&\parbox[t]{6cm}{\raggedright ${1}$, ${-a + 3}$}\\ &7537&x^{3} - x^{2} - 24x - 35&\left[2\right]&\left[2, 2\right]&\parbox[t]{6cm}{\raggedright ${1}$, ${-a - 3}^*$, ${2 a^{2} + 9 a + 10}$, ${-17 a^{2} - 85 a - 100}^*$}\\ &210649&x^{3} - x^{2} - 140x - 587&\left[2, 2\right]&\left[2, 2, 2\right]&\parbox[t]{6cm}{\raggedright ${1}$, ${-a - 7}^*$, ${2 a^{2} + 25 a + 78}$, ${-41 a^{2} - 533 a - 1720}^*$, ${a^{2} - 6 a - 71}$, ${-2 a^{2} - 27 a - 90}^*$, ${-5 a - 31}^*$, ${5 a^{2} + 66 a + 217}$}\\ \midrule 0,2&1521&x^{4} - x^{3} + 4x^{2} + 3x + 9&\left[2\right]&\left[2\right]&\parbox[t]{6cm}{\raggedright ${1}$, ${\frac{1}{4} a^{3} + \frac{3}{4}}$}\\ &18000&x^{4} + 15x^{2} + 45&\left[2, 2\right]&\left[2, 2\right]&\parbox[t]{6cm}{\raggedright ${1}$, ${-1}$, ${-\frac{1}{3} a^{2}}$, ${\frac{1}{3} a^{2}}$}\\ &112896&x^{4} + 20x^{2} + 121&\left[2, 2, 2\right]&\left[2, 2, 2\right]&\parbox[t]{6cm}{\raggedright ${1}$, ${-1}$, ${\frac{2}{11} a^{3} + \frac{18}{11} a + 1}$, ${-\frac{2}{11} a^{3} - \frac{18}{11} a - 1}$, ${-\frac{2}{11} a^{3} - \frac{18}{11} a + 1}$, ${\frac{2}{11} a^{3} + \frac{18}{11} a - 1}$, ${3}$, ${-3}$}\\ \midrule 2,1&-6848&x^{4} - 2x^{3} + 5x^{2} - 2x - 1&\left[2\right]&\left[2\right]&\parbox[t]{6cm}{\raggedright ${1}$, ${-a^{2} + 1}$}\\ &-12375&x^{4} - x^{3} - 4x^{2} - 11x - 29&\left[2\right]&\left[2, 2\right]&\parbox[t]{6cm}{\raggedright ${1}$, ${-\frac{3}{13} a^{3} - \frac{8}{13} a^{2} - a - \frac{19}{13}}^*$, ${-\frac{1}{26} a^{3} + \frac{3}{13} a^{2} + a + \frac{37}{26}}$, ${-\frac{37}{26} a^{3} - \frac{45}{13} a^{2} - 7 a - \frac{295}{26}}^*$}\\ &-198000&x^{4} + 15x^{2} - 495&\left[2, 2\right]&\left[2, 2, 2\right]&\parbox[t]{6cm}{\raggedright ${1}$, ${-1}^*$, ${\frac{2}{7} a^{3} + \frac{8}{7} a^{2} + \frac{64}{7} a + \frac{256}{7}}$, ${-\frac{2}{7} a^{3} - \frac{8}{7} a^{2} - \frac{64}{7} a - \frac{256}{7}}^*$, ${-\frac{1}{21} a^{2} + \frac{15}{7}}$, ${\frac{1}{21} a^{2} - \frac{15}{7}}^*$, ${3}$, ${-3}^*$}\\ \midrule 4,0&21025&x^{4} - 17x^{2} + 36&\left[2\right]&\left[2\right]&\parbox[t]{6cm}{\raggedright ${1}$, ${-\frac{1}{12} a^{3} + \frac{1}{2} a^{2} + \frac{5}{12} a - \frac{1}{2}}$}\\ &32625&x^{4} - x^{3} - 19x^{2} + 4x + 76&\left[2\right]&\left[2, 2\right]&\parbox[t]{6cm}{\raggedright ${1}$, ${-\frac{1}{4} a^{3} - \frac{1}{4} a^{2} + \frac{17}{4} a + \frac{17}{2}}$, ${2 a^{2} - 3 a - 26}^*$, ${6 a^{3} + 8 a^{2} - 97 a - 202}^*$}\\ &176400&x^{4} - 19x^{2} + 64&\left[2\right]&\left[2, 2, 4\right]&\parbox[t]{6cm}{\raggedright ${1}$, ${-1}^*$, ${\frac{1}{2} a^{3} - 2 a^{2} - \frac{3}{2} a + 7}^*$, ${-\frac{1}{2} a^{3} + 2 a^{2} + \frac{3}{2} a - 7}$, ${-\frac{1}{4} a^{3} - a^{2} + \frac{3}{4} a + 4}^*$, ${\frac{1}{4} a^{3} + a^{2} - \frac{3}{4} a - 4}^*$, ${\frac{1}{4} a^{3} - a^{2} - \frac{3}{4} a + 4}^*$, ${-\frac{1}{4} a^{3} + a^{2} + \frac{3}{4} a - 4}^*$}\\ \midrule 1,2&41381&x^{5} - x^{4} - 2x^{2} + 4x - 1&\left[2\right]&\left[2\right]&\parbox[t]{6cm}{\raggedright ${1}$, ${a}$}\\ \midrule 3,1&-243219&x^{5} - 2x^{4} + 2x^{3} - 12x^{2} + 21x - 9&\left[2\right]&\left[2\right]&\parbox[t]{6cm}{\raggedright ${1}$, ${a^{4} + 2 a^{2} - 9 a + 5}$}\\ &-802663&x^{5} - x^{4} - 5x^{3} - 6x^{2} - 3x + 1&\left[2\right]&\left[2, 2\right]&\parbox[t]{6cm}{\raggedright ${1}$, ${a^{3} - 2 a^{2} - 4 a}^*$, ${a + 2}$, ${a^{4} - 8 a^{2} - 8 a}^*$}\\ \midrule 5,0&4010276&x^{5} - 11x^{3} - 9x^{2} + 14x + 9&\left[2\right]&\left[4\right]&\parbox[t]{6cm}{\raggedright ${1}$, ${a^{3} + 2 a^{2} - a - 1}$}\\ &5229109&x^{5} - x^{4} - 12x^{3} + 26x^{2} - 12x - 1&\left[2\right]&\left[2, 2\right]&\parbox[t]{6cm}{\raggedright ${1}$, ${a}^*$, ${a^{4} + a^{3} - 9 a^{2} + 6 a + 2}$, ${2 a^{4} + 3 a^{3} - 20 a^{2} + 14 a + 1}^*$}\\ &15216977&x^{5} - 2x^{4} - 9x^{3} + 2x^{2} + 8x - 1&\left[2\right]&\left[2, 2, 2\right]&\parbox[t]{6cm}{\raggedright ${1}$, ${a^{2} + a}$, ${3 a^{4} - 12 a^{3} - 3 a^{2} + 13 a - 2}^*$, ${6 a^{4} - 23 a^{3} - 7 a^{2} + 25 a - 3}^*$, ${-4 a^{4} + 16 a^{3} + 5 a^{2} - 18 a + 1}^*$, ${-8 a^{4} + 32 a^{3} + 9 a^{2} - 35 a + 4}^*$, ${a^{3} - 2 a^{2} - a + 2}^*$, ${a^{3} - 3 a^{2} + 2 a}^*$}\\ \bottomrule \end{tabular}

\label{tab:unit-dicriminants}\end{adjustwidth}
\end{table}

\FloatBarrier

\section{Appendix: Hilbert symbol and higher unit groups}
\label{sec:Hilbert-symbol}

We give a short self-contained proof of a property of the Hilbert
symbol in dyadic number fields\footnote{We thank Chandan Singh Dalawat
  for a helpful discussion on this matter~\cite{mathoverflow}.}
(Theorem~\ref{thm:dyadic-hilbert-symbol} below) which we needed for
the calculation of the conductors of the characters $\leg \Delta*$ in
Section~\ref{sec:proof-groessencharakter-property}.
 
In this section $K$ denotes a finite extension of $\Q_2$ with ring of
integers $\D$ and prime element~$\pi$. We let $e$ be the ramification
index of~$K$, i.e.~the largest integer such that $\pi^e\mid 2$. We use
$\hilb K\_\_$ for the quadratic Hilbert symbol of~$K$. Recall that
this is the map $\units K\times \units K\rightarrow \{\pm 1\}$ such
that $\hilb Kab=1$ if and only if $ax^2+by^2=1$ has solutions $x$, $y$
in $K$.  The Hilbert symbol is bilinear (see e.g.~\cite[Prop.~57:10
and p.~166]{OMeara}), it obviously factors through a bilinear form on
$\units K/{\units K}^2$, and this form is non-degenerate,
i.e.~$\hilb Kab=1$ for all $b$ is only possible if $a$ is a square in
$K$ (see e.g.~\cite[63:13]{OMeara} for a short proof).

We set $U_0:=\units \D$, and, for $n\ge 1$, let $U_n=1+\pi^n\D$ be the
$n$th higher unit group of $K$. Clearly $U_{n}\supseteq U_{n+1}$ and
$U_k^2\subseteq U_{2k}^2$.  The Local Square Theorem states that
$U_{2e+1}=U_{e+1}^2$. (Recall a simple proof:
$1+4\pi X=\big(\sum_{n\ge 0}\binom {1/2}n (4\pi X)^n\big)^2$ in the
ring of formal power series $\D[\![X]\!]$, and the series converges
for each $x$ in~$\D$ with respect to the valuation~$v$ of~$K$ (and
then towards an element of $U_{e+1}$) since
$v(\binom {1/2}n (4\pi x)^n) \ge n(e+1)-v(n!)$ and, by Legendre's
formula, $v(n!) = (n-s_n)e$, where $s_n$ is the sums of~$1$s is the
binary expansion of $n$, so that $v(n!)\le (n-1)e$.)

\begin{Lemma}
  \label{lem:units-mod-squares}
  For $0\le k\le e-1$, one has
  \begin{equation*}
    U_{2k} = U_{2k+1} U_k^2
    .
  \end{equation*}
\end{Lemma}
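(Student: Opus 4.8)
The plan is to prove both inclusions, the non-obvious one by lifting a square root from the residue field. Write $\kappa=\D/\pi\D$ for the residue field of~$K$; it is a finite field of characteristic~$2$, so its Frobenius $x\mapsto x^2$ is bijective, and in particular every element of~$\D$ is congruent to a square modulo~$\pi$ (and every unit to the square of a unit modulo~$\pi$). This surjectivity is the whole content of the lemma; no heavy machinery is needed.

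First I would dispose of the inclusion $U_{2k+1}U_k^2\subseteq U_{2k}$. One has $U_{2k+1}\subseteq U_{2k}$ by definition, and for $1\le k\le e-1$ the identity $(1+\pi^k x)^2=1+2\pi^k x+\pi^{2k}x^2$ together with $v(2\pi^k x)\ge e+k\ge 2k+1$ gives at once both $U_k^2\subseteq U_{2k}$ and the congruence
\begin{equation*}
  (1+\pi^k x)^2\equiv 1+\pi^{2k}x^2\bmod \pi^{2k+1}\qquad(x\in\D);
\end{equation*}
for $k=0$ one has trivially $U_0^2\subseteq U_0$. It is exactly here that the hypothesis $k\le e-1$ enters, ensuring $v(2\pi^k)=e+k>2k$, so that squaring on $U_k$ induces on the quotient $U_{2k}/U_{2k+1}$ (identified additively with $\kappa$ for $k\ge1$, and with $\units\kappa$ for $k=0$) precisely the Frobenius of~$\kappa$.

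For the reverse inclusion, let $u\in U_{2k}$. Suppose first $k\ge 1$ and write $u=1+\pi^{2k}a$ with $a\in\D$. Choose $c\in\D$ with $c^2\equiv a\bmod \pi$, possible by surjectivity of the Frobenius on~$\kappa$. By the displayed congruence, $(1+\pi^k c)^2\equiv 1+\pi^{2k}a\equiv u\bmod \pi^{2k+1}$, so the element $u\,(1+\pi^k c)^{-2}$ of $U_{2k}$ is congruent to~$1$ modulo~$\pi^{2k+1}$; writing both $u$ and $(1+\pi^k c)^2$ in the form $1+\pi^{2k}(\,\cdot\,)$ and expanding the inverse as a convergent geometric series, one checks — using $4k\ge 2k+1$ — that this element lies in~$U_{2k+1}$. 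Hence $u=\bigl(u\,(1+\pi^k c)^{-2}\bigr)(1+\pi^k c)^2\in U_{2k+1}U_k^2$. For $k=0$ the same argument applies with $\pi^{2k}$ replaced by~$1$: $u$ is a unit, we pick a unit $c$ with $c^2\equiv u\bmod \pi$, and then $u\,c^{-2}\in U_1$, so $u\in U_1 U_0^2$.

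I do not expect a genuine obstacle here: the argument is a single Hensel-type lift, and its only subtlety is the valuation bookkeeping (the bound $k\le e-1$, and $4k\ge2k+1$ for $k\ge1$) that makes "squaring" behave like the residue Frobenius on the relevant one-dimensional graded piece; handling $k=0$ separately, where the piece is $\units\kappa$ rather than $\kappa^+$, is the only mild case distinction.
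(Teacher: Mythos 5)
Your proof is correct and follows essentially the same route as the paper's: both reduce the nontrivial inclusion to solving $t^2\equiv(a-1)/\pi^{2k}\bmod\pi$ in the residue field, using that the Frobenius $t\mapsto t^2$ is bijective on $\D/\pi$ and that $k\le e-1$ forces $2\pi^k\equiv 0\bmod\pi^{2k+1}$ so that squaring on $U_k$ acts as the Frobenius on $U_{2k}/U_{2k+1}$. The only cosmetic difference is your separate treatment of $k=0$ and the (unnecessary) geometric-series expansion of $(1+\pi^kc)^{-2}$, where simply dividing the congruence by the unit $(1+\pi^kc)^2$ suffices.
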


\begin{proof}
  Let $a$ in $U_{2k}$. The congruence
  $a \equiv \left(1+\pi^{k}t\right)^2 \bmod \pi^{2k+1}$ is equivalent
  to $t^2 \equiv \left({a-1}\right)/{\pi^{2k}}\bmod \pi$ (since the
  assumption $k<e$ implies $\pi^{2k+1}\mid 2\pi^{k}$), and this has a
  solution~$t$ (as the map $t\mapsto t^2$ defines an automorphism of
  $\D/\pi$).  With such a solution $t$, we have
  $a / \left(1+\pi^{k}t\right)^2 \equiv 1 \bmod \pi^{2k+1}$,
  i.e.~$a\in U_{2k+1}\left(1+\pi^{k}t\right)^2$. For proving the
  inverse inclusion it suffices to note that $U_k^2\subseteq U_{2k}$.
  Indeed, for any $(1+\pi^{k}t)\in U_k$, we have
  $(1+\pi^{k}t)^2\equiv 1\bmod \pi^{2k}$, since $\pi^k\mid 2$.
\end{proof}

\begin{Proposition}
  \label{prop:Hilbert-symbol-on-unit-groups}
  For any pair of even integers $i,j\ge 0$ with $i+j=2e$, one has
  $\hilb K {U_i}{U_j} =1$.
\end{Proposition}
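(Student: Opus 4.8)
\noindent The plan is to unwind the definition of the Hilbert symbol and then construct a norm explicitly. Write $i=2k$, $j=2l$ with $k+l=e$. By the symmetry and bilinearity of $\hilb K{\_}{\_}$ and by Lemma~\ref{lem:units-mod-squares} (so $U_{2k}=U_{2k+1}U_k^2$, and likewise for $U_{2l}$), it suffices to treat a pair $a\in U_{2k}$, $b\in U_{2l}$ with $k\le l$, neither of which is a square in $K$. Then $K(\sqrt a)/K$ is a genuine quadratic extension, and $\hilb Kab=1$ if and only if $b$ is a norm from it, i.e.\ $b=U^2-aV^2$ for some $U,V\in K$. So the whole problem reduces to producing such $U$ and $V$.

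\noindent I would search for the norm representation in the shape $U=1+\pi^{l}u$, $V=\pi^{l}v$ with $u,v\in\D$. Writing $b=1+\pi^{2l}\beta$ and using $2\pi^{-l}=\bar 2\,\pi^{k}$ (where $\bar 2:=2\pi^{-e}\in\units\D$), one finds $U^2=1+\pi^{2l}\bigl(u^{2}+\bar 2\,\pi^{k}u\bigr)$, so that the equation $U^2-aV^2=b$ becomes
\begin{equation*}
  u^{2}+\bar 2\,\pi^{k}u-a\,v^{2}=\beta .
\end{equation*}
Reduced modulo $\pi$ this reads $\bar u^{2}+\bar a\,\bar v^{2}\equiv\bar\beta$, which is solvable because raising to the square is the Frobenius automorphism of the residue field (of characteristic $2$): take e.g.\ $\bar v=0$ and $\bar u$ a square root of $\bar\beta$. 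Hence the displayed equation has an approximate solution $(u_0,v_0)$, and everything comes down to improving it to an exact one in $\D$.

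\noindent This lifting step is the only real difficulty, and it is precisely where residue characteristic $2$ bites: the partial derivatives $2u+\bar 2\,\pi^{k}$ and $-2av$ of the defining polynomial all have strictly positive valuation, so a naive Hensel/Newton iteration never starts. For $k=0$ there is nothing to worry about — the equation is then $u^{2}+\bar 2\,u-a\,v^{2}=\beta$, whose $u$-derivative $2u+\bar 2$ is a unit, so plain Hensel lifts $(u_0,v_0)$ outright and $b$ is a norm from $K(\sqrt a)$ with no trickery at all. For $1\le k\le l$ no unit derivative is available; moreover simply forcing $b+aV^{2}\in U_{2e+1}\subseteq\units K^{2}$ (via the Local Square Theorem) fails, since that would require $-\beta/a$ to be a square modulo $\pi^{2k+1}$, which is not automatic. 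The remedy is to reach $\units K^{2}$ through the coarser step $U_{l}^{2}=\units K^{2}\cap U_{2l}\subseteq U_{2l}$, using the explicit shape $U_{l}^{2}=\{\,1+\pi^{2l}(w^{2}+\bar 2\,\pi^{k}w):w\in\D\,\}$ together with the surjectivity of squaring on the residue field to adjust $v$ so that $b+aV^{2}$ actually lands in $U_l^2$. This final matching — yet another congruence in residue characteristic $2$ — is the delicate core of the argument, and it is the step that I expect to carry the short, clever proof the authors advertise; the rest is bookkeeping with the higher unit filtration and the Local Square Theorem.
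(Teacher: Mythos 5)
Your proposal is not a proof: it stops exactly at the step that carries all the content. After setting up the norm equation $b=U^2-aV^2$ with $U=1+\pi^l u$, $V=\pi^l v$ and reducing it to $u^{2}+\bar 2\,\pi^{k}u-av^{2}=\beta$, you solve this only modulo $\pi$ and then observe, correctly, that for $k\ge 1$ no partial derivative is a unit, so Hensel's lemma cannot start from a mod-$\pi$ solution. The ``remedy'' you describe --- adjust $v$ so that $b+aV^{2}$ lands in $U_l^2=\{1+\pi^{2l}(w^{2}+\bar 2\,\pi^{k}w)\}$ --- is just a restatement of the same equation $w^{2}+\bar 2\,\pi^{k}w-av^{2}=\beta$, and ``surjectivity of squaring on the residue field'' again only solves it modulo $\pi$. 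For the Newton iteration in the $w$-variable to converge one needs an approximate solution modulo $\pi^{2k+1}$: perturbing $w$ by $\epsilon$ changes the left-hand side by $\epsilon^{2}+\bar 2\,\pi^{k}(1+\pi^{e-k}w)\epsilon$, whose linear part dominates only when $v(\epsilon)>k$, so it can only repair errors of valuation $>2k$. Producing a solution modulo $\pi^{2k+1}$ is precisely the nontrivial dyadic computation, and you concede the point yourself by writing that this ``final matching'' is the step you ``expect to carry the short, clever proof''. Expecting is not proving: for every $k\ge 1$ the argument has a hole exactly where the difficulty lives.

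The paper's proof avoids this computation entirely, and it is worth seeing how. Assume $0\le i\le e$; by Lemma~\ref{lem:units-mod-squares} one may replace $a$ by a square multiple lying in $U_{i+1}$ (this is the actual use of that lemma, which your write-up cites but never exploits). Then $(a-1)(b-1)\in\pi^{\,i+1+j}\D=\pi^{2e+1}\D$, so $a+b-ab=1-(a-1)(b-1)$ lies in $U_{2e+1}$ and is a square $w^{2}$ by the Local Square Theorem, and the identity $a(w+b)^{2}+b(w-a)^{2}=(a+b)^{2}$ exhibits a solution of $ax^{2}+by^{2}=1$. If you insist on the norm-equation route, the missing step is an honest determination of the image of $w\mapsto w^{2}+\bar 2\,\pi^{k}w$ together with its translates by $av^{2}$ modulo $\pi^{2k+1}$ --- essentially the classical computation of dyadic norm groups --- and not a one-line appeal to the Frobenius on the residue field.
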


\begin{proof}
  We can assume that $0\le i\le e$.  Let $a$ in $U_{i}$ and $b$ in
  $U_{j}$. We have to show that $ax^2+by^2=1$ has solutions $x$ and
  $y$ in $K$.  We can assume by Lemma~\ref{lem:units-mod-squares} that
  $a$ is in~$U_{i+1}$. But then $a+b-ab = 1-(a-1)(b-1)$ is in
  $U_{2e+1}$, hence, by the Local Square Theorem $a+b-ab=w^2$ for a
  unit $w$. But then again $a(w+b)^2+b(w-a)^2=(a+b)^2$, which implies
  the claim.
\end{proof}

\begin{Lemma}
  \label{lem:U2e}
  One has $U_{2e}/U_{e}^2\cong \F_2$.
\end{Lemma}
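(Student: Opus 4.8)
The plan is to pin down the image of the squaring map on $U_e$ inside $U_{2e}$ by reducing modulo $U_{2e+1}$ and using the standard isomorphism $U_{2e}/U_{2e+1}\cong(\D/\pi,+)$ given by $1+\pi^{2e}a\mapsto a+\pi\D$.

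First I would write $2=\pi^e\varepsilon$ with $\varepsilon\in\units\D$, which is possible since $e=v(2)$. Then for $s$ in $\D$ one has $(1+\pi^e s)^2=1+2\pi^e s+\pi^{2e}s^2=1+\pi^{2e}(\varepsilon s+s^2)$, which shows at once that $U_e^2\subseteq U_{2e}$, and that under the isomorphism above the image of $U_e^2$ in $U_{2e}/U_{2e+1}$ is the set $\{\varphi(t):t\in\D/\pi\}$, where $\varphi(t):=t^2+\bar\varepsilon\,t$ and $\bar\varepsilon\in\units{(\D/\pi)}$ is the (nonzero) class of $\varepsilon$.

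Next I would note that $\varphi$ is $\F_2$-linear, being the sum of the Frobenius $t\mapsto t^2$ and the scalar multiplication $t\mapsto\bar\varepsilon t$, and that its kernel is $\{t:t(t+\bar\varepsilon)=0\}=\{0,\bar\varepsilon\}$, a group of order $2$ since $\bar\varepsilon\neq0$. Hence $\varphi(\D/\pi)$ has index $2$ in the finite additive group $\D/\pi$; equivalently, $U_e^2U_{2e+1}$ has index $2$ in $U_{2e}$. Finally I would invoke the Local Square Theorem to get $U_{2e+1}=U_{e+1}^2\subseteq U_e^2$, so that $U_e^2U_{2e+1}=U_e^2$; combining the two statements yields $[U_{2e}:U_e^2]=2$, i.e.\ $U_{2e}/U_e^2\cong\F_2$.

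I do not expect a real obstacle; the only point to get right is that the relevant reduction map is the Artin--Schreier-type map $\varphi$ rather than the pure Frobenius, and that the Local Square Theorem is exactly what absorbs $U_{2e+1}$ into $U_e^2$ — which also explains why Lemma~\ref{lem:units-mod-squares} stops at $k=e-1$ and why $U_{2e}/U_e^2$ is nontrivial.
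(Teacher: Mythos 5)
Your proof is correct and follows essentially the same route as the paper: reduce modulo $U_{2e+1}$, observe that squaring $U_e$ induces an Artin--Schreier-type additive map on $\D/\pi$ whose kernel has order $2$, and absorb $U_{2e+1}$ into $U_e^2$ via the Local Square Theorem. The only cosmetic difference is that the paper normalizes the computation as $(1+2y)^2=1+4(y+y^2)$ and exhibits the quotient explicitly as the trace character $1+4x\mapsto\tr_{(\D/\pi)/\F_2}\overline x$, whereas you obtain the index $2$ by counting the kernel of $t\mapsto t^2+\bar\varepsilon t$ directly.
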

\begin{proof}
  The application $1+4x\mapsto \tr_{\D/\pi/\F_2} \overline x$ (where
  $\overline x$ is the residue class of $x$ mod~$\pi$) defines an
  epimorphism of $U_{2e}$ onto $\F_2$. We claim that its kernel equals
  $U_e^2$. It contains $U_e^2$ since Since $(1+ey)^2=1+4(y+y^2)$ and
  $\tr_{\D/\pi/\F_2}(\overline y+\overline y^2)=0$. Vice versa, if
  $1+4x$ has $\tr_{\D/\pi/\F_2} \overline x = 0$ then
  $1+4x\equiv (1+2y)^2\bmod 4\pi$, i.e.~$x\equiv y+y^2\bmod \pi$, or
  equivalently $(1+4x)/(1+2y)^2\in U_{2e+1}=U_{e+1}^2$, has a
  solution~$y$. Namely,
  $\overline y\mapsto \overline y + \overline y^2$ defines an
  $\F_2$-linear endomorphism of~ $\D/\pi$ with kernel~$\F_2$ and image
  equal to the the kernel of~$\tr_{(\D/\pi)/\F_2}$.
\end{proof}

We use $\overline U_n$ for the image of $U_n$ under the canonical map
$\units K\rightarrow \units K/{\units K}^2$. Note
that~$\overline U_n=U_n{\units K}^2/{\units K}^2$.  for
$0\le k\le e-1$, Lemma~\ref{lem:units-mod-squares} implies
$U_{2k}{\units K}^2 = U_{2k+1}{\units K}^2$, and we conclude
\begin{equation*}
  \overline U_{2k}=\overline U_{2k+1}
  .
\end{equation*}
We set
\begin{equation}
  \label{eq:filtrations-sequence}
  V_{-1}:=\units K/{\units K}^2,\quad
  V_k := U_{2k}{\units K}^2/{\units K}^2
  \quad (0\le k\le e),
  \quad
  V_{e+1}:=1
  .
\end{equation}

\begin{Lemma}
  One has
  \begin{equation*}
    1=V_{e+1}\subseteq_2
    V_{e} \subseteq_{2^f} V_{e-1}
    \dots \subseteq_{2^f} V_0
    \subseteq_2 V_{-1} = \units K/{\units K}^2
    ,
  \end{equation*}
  where ``$\subseteq_n$'' means ``is subgroup of index $n$'', and
  where $f$ is the degree of~$\pi$, i.e.~$2^f=\card{\D/\pi}$.  One
  has, in particular, for $0\le k\le e$,
  \begin{equation}
    \label{eq: higher-unis-cardinalities}
    \card V_k = 2\cdot 2^{f(e-k)}
    .
  \end{equation}
\end{Lemma}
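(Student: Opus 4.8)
The plan is to determine each index in the chain separately and then multiply up from $V_{e+1}$. Throughout I would use the standard fact that for $n\ge 1$ the map $1+\pi^n x\mapsto x\bmod\pi$ is a surjective homomorphism $U_n\to\D/\pi$ with kernel $U_{n+1}$ (the cross term $\pi^{2n}xy$ in a product being irrelevant modulo $\pi^{n+1}$), so that $[U_n:U_{n+1}]=\card{\D/\pi}=2^f$. I also recall from~\eqref{eq:filtrations-sequence} that $V_k=\overline U_{2k}$ for $0\le k\le e$, and that $\overline U_{2k-2}=\overline U_{2k-1}$ for $1\le k\le e$ by the remark preceding the Lemma.

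The two outer indices are quick. For $V_0\subseteq V_{-1}$ the valuation $v\colon\units K\to\Z$ has kernel $U_0$ and sends ${\units K}^2$ onto $2\Z$, hence induces $\units K/U_0{\units K}^2\cong\Z/2\Z$; since $V_{-1}/V_0\cong\units K/U_0{\units K}^2$, this index is $2$. For $V_{e+1}\subseteq V_e$ I would note $\card V_e=[U_{2e}{\units K}^2:{\units K}^2]=[U_{2e}:U_{2e}\cap{\units K}^2]$, and combine $[U_{2e}:U_e^2]=2$ (Lemma~\ref{lem:U2e}) with the identification $U_{2e}\cap{\units K}^2=U_e^2$, which is the $k=e$ instance of the core claim below.

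The heart of the argument is the claim that for $1\le k\le e$ one has $U_{2k-1}\cap{\units K}^2\subseteq U_{2k}$, with equality $U_{2e}\cap{\units K}^2=U_e^2$ when $k=e$. First I would record the elementary sublemma: for $v\in\units\D$, if $v\notin U_e$, say $v\in U_j\setminus U_{j+1}$ with $j<e$, then $v^2\in U_{2j}\setminus U_{2j+1}$, while if $v\in U_e$ then $v^2\in U_e^2\subseteq U_{2e}$. This is immediate from $v^2-1=2(v-1)+(v-1)^2$, since for $j<e$ the two summands have valuations $e+j$ and $2j$ with $2j<e+j$, so $v(v^2-1)=2j$, an even number. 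Granting the sublemma: any $u$ in $U_{2k-1}\cap{\units K}^2$ equals $v^2$ with $v\in\units K$, and as $u$ is a unit $v$ lies in $\units\D$; if $v\notin U_e$ with $v\in U_j\setminus U_{j+1}$, $j<e$, then $2j=v(u-1)\ge 2k-1$ forces $j\ge k$, hence $u\in U_{2j}\subseteq U_{2k}$; if $v\in U_e$ then $u\in U_e^2\subseteq U_{2e}\subseteq U_{2k}$ (using $k\le e$). The equality at $k=e$ follows the same way, the case $j<e$ being excluded there because it would give $v(u-1)=2j<2e$ against $u\in U_{2e}$.

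It remains to handle the middle indices. Using $V_{k-1}=\overline U_{2k-1}$ and $V_k=\overline U_{2k}$ gives $[V_{k-1}:V_k]=[\overline U_{2k-1}:\overline U_{2k}]$, and the second and third isomorphism theorems rewrite this quotient as $U_{2k-1}/\bigl(U_{2k-1}\cap U_{2k}{\units K}^2\bigr)$. If $us^2\in U_{2k-1}$ with $u\in U_{2k}$ and $s\in\units K$, then $s^2=u^{-1}(us^2)\in U_{2k-1}\cap{\units K}^2\subseteq U_{2k}$ by the core claim, so $U_{2k-1}\cap U_{2k}{\units K}^2=U_{2k}$ and the index is $[U_{2k-1}:U_{2k}]=2^f$. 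Multiplying the chain of indices downward from $\card V_{e+1}=1$ then yields $\card V_k=2\cdot 2^{f(e-k)}$ for $0\le k\le e$. The only step carrying genuine content is the core claim, and since that reduces to the one-line valuation computation of the sublemma, I expect no real obstacle beyond keeping the isomorphism-theorem bookkeeping straight.
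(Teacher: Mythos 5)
Your proof is correct and follows essentially the same route as the paper's: the two outer indices come from $\units K=\langle\pi\rangle\times U_0$ and from Lemma~\ref{lem:U2e}, and the middle indices $2^f$ reduce to showing that a square lying in $U_{2k-1}$ already lies in $U_{2k}$, which both you and the paper establish via the identity $a^2-1=2(a-1)+(a-1)^2$ and the resulting valuation comparison. The only difference is organisational: you isolate this as a ``core claim'' $U_{2k-1}\cap{\units K}^2\subseteq U_{2k}$, which as a byproduct also furnishes a proof of $U_{2e}\cap{\units K}^2=U_e^2$ — an identity the paper uses for the index $[V_e:V_{e+1}]=2$ but does not verify explicitly.
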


\begin{proof}
  The first equality in the filtration chain, i.e.~that the elements
  of $U_{2e+1}$ are squares, is the Local Square Theorem.

  For the first ``$\subseteq_2$'' we note that
  $U_{2e}\cap {\units K}^2= U_e^2$, whence
  \begin{equation*}
    U_{2e}/U_e^2 \cong U_{2e}{\units K}^2/{\units K}^2
    ,
  \end{equation*}
  and apply Lemma~\ref{lem:U2e}.

  For the last ``$\subseteq_2$'' note that
  $\units K=\langle\pi\rangle\times U_0$, from which we deduce
  $\units K/{\units K}^2 = \langle\overline \pi\rangle\times \overline
  U_0$, where $\overline {(\ )}$ is the canonical projection.
 
  For the ``$\subseteq_{2^f}$'' we calculate
  \begin{equation*}
    V_{k-1}/V_k
    =
    \overline U_{2k-1}/\overline U_{2k}
    \cong U_{2k-1}{\units K}^2/U_{2k}{\units K}^2
    \cong U_{2k-1}/U_{2k} \cong \D/\pi\D
  \end{equation*}
  The last two isomorphisms are from right to left:
  $a+\pi\D \mapsto (1+a\pi^{2k-1})U_{2k}$ and
  $uU_{2k}\mapsto uU_{2k}{\units K}^2$.  Note that the second
  application defines indeed an isomorphism: It is obviously
  surjective. For proving that it is injective suppose, $uU_{2k}$, for
  a given $u$ in $U_{2k-1}$, is mapped to $U_{2k}{\units K}^2$,
  i.e.~$u=va^2$ for some $v$ in $U_{2k}$ and $a$ in~$\units K$.  But
  then $a$ must be unit, and since $a^2\equiv 1\bmod \pi^{2k-1}$ and
  $k\le e$, we conclude that, in fact, $a\equiv 1\bmod \pi^k$, which
  in turn implies that $a^2$ is in $U_{2k}$ (again since $k\le e$);
  hence $u=va^2$ is in $U_{2k}$.
\end{proof}

\begin{Theorem}
  \label{thm:dyadic-hilbert-symbol}
  For the subspaces of the filtration~\eqref{eq:filtrations-sequence}
  of $\units K/{\units K}^2$, one has
  \begin{equation*}
    \dual V_k = V_{e-k}
    ,
  \end{equation*}
  for every $-1\le k\le e+1$ (Here $\dual V_k$ denotes the subgroup of
  all $a{\units K}^2$ in $V_{-1}=\units K/{\units K}^2$ such that
  $\hilb K ab=1$ for all $b{\units K}^2$ in $V_k$.)
\end{Theorem}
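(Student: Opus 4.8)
The plan is to derive the theorem from Proposition~\ref{prop:Hilbert-symbol-on-unit-groups}, the cardinality formula~\eqref{eq: higher-unis-cardinalities}, and the non-degeneracy of the Hilbert form on $\units K/{\units K}^2$ recalled at the start of this section. First I would dispose of the two boundary values of $k$: for $k=-1$ the assertion $\dual V_{-1}=V_{e+1}=1$ is exactly non-degeneracy, and for $k=e+1$ the orthogonal complement of the trivial group $V_{e+1}=1$ is all of $\units K/{\units K}^2=V_{-1}=V_{e-(e+1)}$, which is immediate. So it remains to treat $0\le k\le e$, where the strategy is to prove the easy inclusion $V_{e-k}\subseteq\dual V_k$ using the Proposition and then force equality by counting orders.

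For the inclusion $V_{e-k}\subseteq\dual V_k$: since the Hilbert symbol factors through $\units K/{\units K}^2$, it is enough to check $\hilb K ab=1$ for $a\in U_{2k}$ and $b\in U_{2(e-k)}$. But $i:=2k$ and $j:=2(e-k)$ are even non-negative integers with $i+j=2e$, so this is precisely Proposition~\ref{prop:Hilbert-symbol-on-unit-groups}.

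It remains to promote these inclusions to equalities by a count of orders. Because $\units K/{\units K}^2$ is a finite $\F_2$-vector space and the Hilbert form on it is non-degenerate, the restriction-of-characters map fits into an exact sequence
\[
  1\longrightarrow \dual V_k\longrightarrow \units K/{\units K}^2\longrightarrow \sym{Hom}\bigl(V_k,\{\pm1\}\bigr)\longrightarrow 1 ,
\]
the surjectivity on the right holding because $V_k$ is a direct summand of $\units K/{\units K}^2$ as $\F_2$-vector space. Hence $\card{\dual V_k}=\card{\units K/{\units K}^2}/\card{V_k}$. By~\eqref{eq: higher-unis-cardinalities} and the index-$2$ inclusion $V_0\subseteq V_{-1}$ one has $\card{\units K/{\units K}^2}=\card{V_{-1}}=2\card{V_0}=2^{fe+2}$, so for $0\le k\le e$
\[
  \card{\dual V_k}=\frac{2^{fe+2}}{2\cdot 2^{f(e-k)}}=2\cdot 2^{fk}=\card{V_{e-k}} .
\]
Combined with $V_{e-k}\subseteq\dual V_k$ this yields $\dual V_k=V_{e-k}$, which proves the theorem.

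I do not anticipate a genuine obstacle at this stage: the delicate ingredient, the vanishing $\hilb K{U_i}{U_j}=1$ whenever $i+j=2e$ with $i,j$ even, has already been isolated as Proposition~\ref{prop:Hilbert-symbol-on-unit-groups}, and what remains is bookkeeping with the filtration~\eqref{eq:filtrations-sequence}. The only point deserving a moment's care is the surjectivity of the restriction map onto $\sym{Hom}(V_k,\{\pm1\})$, which is automatic since every subspace of a finite-dimensional $\F_2$-vector space is a direct summand.
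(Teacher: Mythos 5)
Your proposal is correct and follows essentially the same route as the paper: the inclusion $V_{e-k}\subseteq\dual V_k$ from Proposition~\ref{prop:Hilbert-symbol-on-unit-groups}, then equality by comparing cardinalities via non-degeneracy and the formula~\eqref{eq: higher-unis-cardinalities}. The paper phrases the counting step as $\dim_{\F_2}\dual V_k=\dim_{\F_2}V_{-1}-\dim_{\F_2}V_k=\dim_{\F_2}V_{e-k}$ rather than through your exact sequence, but this is only a cosmetic difference.
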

    
\begin{proof}
  The Hilbert symbol defines a non-degenerate bilinear form on the
  $\F_2$-vector space $V_{-1}$.  From~\eqref{eq:
    higher-unis-cardinalities} we know
  \begin{equation*}
    \dim_{\F_2} V_k + \dim_{\F_2} V_{e-k}=\dim_{\F_2} V_{-1}
    ,
  \end{equation*}
  and hence $\dim_{\F_2} \dual V_k = \dim_{\F_2} V_{e-k}$.  It
  suffices therefore to prove $\dual V_k \supseteq V_{e-k}$. For
  $k={-1}$ or $k=e+1$ this is trivial. For $0\le k\le e$ the inclusion
  is equivalent to~$\hilb K{U_{2k}}{U_{2e-2k}}=1$. But this is
  Proposition~\ref{prop:Hilbert-symbol-on-unit-groups}.
\end{proof}

\bibliography{qfields.bib}
\addcontentsline{toc}{chapter}{Bibliography} \bibliographystyle{alpha}

\end{document}